\documentclass{amsart}

\usepackage[colorlinks=true, urlcolor=blue, citecolor=blue, linkcolor=blue, hypertexnames=false]{hyperref}
\usepackage{amssymb}
\usepackage{mathrsfs}
\usepackage{dsfont}
\usepackage{mathtools}
\usepackage{aliascnt}

\newtheorem{qst}{Question}

\newtheorem{thm}{Theorem}

\newtheorem{lem}{Lemma}

\newtheorem{prp}{Proposition}

\newtheorem{cor}{Corollary}

\theoremstyle{definition}

\newtheorem{dfn}{Definition}

\newtheorem{xpl}{Example}

\numberwithin{equation}{section}

\author{Charles A. Akemann \lowercase{and} Tristan Bice}
\address{University of California, Santa Barbara, United States\newline
\lowercase{and} Federal University of Bahia, Salvador, Brazil}
\title{Hereditary C*-Subalgebra Lattices}
\keywords{C*-Algebras, Hereditary C*-Subalgebras, *-Annihilators, Lattices, Ortholattices, Topology, Locales, Quantales, Type Decomposition}
\subjclass[2010]{Primary: 46L05, 46L85; Secondary: 06B23, 06C15}



\begin{document}

\begin{abstract}
We investigate the connections between order and algebra in the hereditary C*-subalgebra lattice $\mathcal{H}(A)$ and *-annihilator ortholattice $\mathscr{P}(A)^\perp$.  In particular, we characterize $\vee$-distributive elements of $\mathcal{H}(A)$ as ideals, answering a 25 year old question, allowing the quantale structure of $\mathcal{H}(A)$ to be completely determined from its lattice structure.  We also show that $\mathscr{P}(A)^\perp$ is separative, allowing for C*-algebra type decompositions which are completely consistent with the original von Neumann algebra type decompositions.
\end{abstract}

\maketitle

\section{Introduction}

\subsection{Motivation}

Hereditary C*-subalgebras $\mathcal{H}(A)$ of a C*-algebra $A$ have long been considered analogs of open sets.  Given the fundamental role open subsets and their lattice structure play in topological spaces (as more clearly seen in the point-free topology of frames and locales), one would expect us by now to have a deep understanding of $\mathcal{H}(A)$, with numerous theorems relating algebraic properties of $A$ to order properties of $\mathcal{H}(A)$.  But on the contrary, our knowledge of $\mathcal{H}(A)$ is still quite limited, and the study of $\mathcal{H}(A)$ has remained very much on the periphery of mainstream C*-algebra research.  Needless to say, we see this as a somewhat strange state of affairs.

Another perplexing trend in operator algebras is the early divergence of von Neumann algebra and C*-algebra theory.  Again, one would naturally expect that, as von Neumann algebras form a nice subclass of C*-algebras, much inspiration could be drawn from looking at the von Neumann algebra theory and trying to generalize it in various ways to C*-algebras.  But yet again, we rarely see this happening, particularly in modern C*-algebra research, with much of the von Neumann algebra theory dismissed long ago as either inapplicable or irrelevant to general C*-algebras.

In fact, this is no coincidence, as it is precisely this more topological, order theoretic approach that is required to generalize some of the basic von Neumann algebra theory.  This can be seen in \cite{Bice2014c} and \cite{Bice2014b}, and we continue in this direction in the present paper, using mainly classical theory to prove a number of new, fundamental and very general C*-algebra results regarding $\mathcal{H}(A)$, and its subset of *-annihilators $\mathscr{P}(A)^\perp$.  We hope this might spur on further research in this largely neglected subfield of C*-algebra theory.

\subsection{Outline}

We give the necessary basic definitions and assumptions for the rest of the paper in \S\ref{prelim}, which the reader is welcome to skim over and refer back to only when unfamiliar terminology or notation appears later on.  We start the paper proper with a brief note on compactness in \S\ref{C&C}.  Following that, we examine the semicomplement structure of $\mathcal{H}(A)$ in \S\ref{S&O} and obtain various characterizations of strong orthogonality in \autoref{trieq}.  Next, in \S\ref{C&P}, we exhibit a correspondence, in the unital case, between two of the most common objects that appear in lattices and C*-algebras, namely complements and projections.  We then show in \S\ref{*AI} that the superficially similar notion of a $\wedge$-pseudocomplement turns out to have a quite different algebraic characterization in $\mathcal{H}(A)$, namely as an annihilator ideal.  Next, in \S\ref{D&I}, we show that arbitrary ideals in $\mathcal{H}(A)$ can be characterized as the $\vee$-distributive elements, answering a long-standing question from \cite{BorceuxRosickyBossche1989}.  Quantales, as introduced in \cite{Mulvey1986}, have often been considered the appropriate non-commutative analogs of locales, and this characterization shows that the natural quantale structure on $\mathcal{H}(A)$ is, in fact, completely determined by its lattice structure.

Another result of fundamental importance is the fact that *-annihilators satisfy a strong version of the SSC property, which we show in \S\ref{S&*A} (although, as we show in \autoref{C01K}, this property does not quite characterize the *-annihilators).  We then examine the *-annihilators $\mathscr{P}(A)^\perp$ as an ortholattice in its own right in \S\ref{*AO}.  It turns out that many of the order theoretic concepts examined in $\mathcal{H}(A)$ converge in the case of $\mathscr{P}(A)^\perp$, coinciding with the annihilator ideals, as we show in \autoref{anncentre}.  Lastly, we outline in \S\ref{C*TD} how the separativity/SSC property of $\mathscr{P}(A)^\perp$ allows for order theoretic type decompositions that are completely consistent with the original von Neumann algebra type decompositions.

\section{Preliminaries}\label{prelim}

\subsection{Posets}

Recall that a \emph{poset} is simply a partially ordered set, i.e. a set $\mathbb{P}$ together with a binary relation $\leq$ on $\mathbb{P}$ that is transitive ($p\leq q\leq r\Rightarrow p\leq r$), reflexive ($p\leq p$) and anti-symmetric ($p\leq q\leq p\Rightarrow p=q$).  A \emph{subposet} of $\mathbb{P}$ is simply a subset of $\mathbb{P}$ under the same ordering.  For any poset $\mathbb{P}$ there is a dual poset $\mathbb{P}^*$ such that the underlying set is the same but the order reversed.  Consequently, all poset definitions and results also have duals.

\emph{Infimums} (greatest lower bounds) are, when they exist, denoted by $\bigwedge S$ (in particular, $\bigwedge\emptyset$ is the maximum of $\mathbb{P}$, when it exists).  We also write $p\wedge q$ for $\bigwedge\{p,q\}$.  A \emph{$\wedge$-lattice} (or \emph{meet semilattice}) is a poset $\mathbb{P}$ where $p\wedge q$ exists, for all $p,q\in\mathbb{P}$.  A \emph{$\wedge$-sublattice} of a $\wedge$-lattice $\mathbb{P}$ is a subset $S$ closed under $\wedge$.  We similarly define $\bigwedge$-(sub)lattices.  Dually, we denote \emph{supremums} (least upper bounds) by $\bigvee$ and $\vee$ and define $\vee$-(sub)lattices and $\bigvee$-(sub)lattices accordingly.  In fact, $\mathbb{P}$ is a $\bigwedge$-lattice precisely when $\mathbb{P}$ is a $\bigvee$-lattice, in which case $\mathbb{P}$ is called a \emph{complete} lattice.  For the remainder of this paper, let us assume that
\begin{center}\textbf{$\mathbb{P}$ is a poset with $0=\bigwedge\mathbb{P}$ and $1=\bigvee\mathbb{P}$.}\end{center}

Given a set $X$, we always order its subsets $\mathscr{P}(X)$ by $\subseteq$ (inclusion), which makes $\mathscr{P}(X)$ a complete lattice in which $\bigvee$ is $\bigcup$ (union) and $\bigwedge$ is $\bigcap$ (intersection).  Many of the posets we consider in this paper are subposets of $\mathscr{P}(X)$, for some $X$, like the open subsets $\mathscr{P}(X)^\circ$ of a topological space $X$.  Here $\mathscr{P}(X)^\circ$ is, by definition, nothing more than some $\bigvee$-sublattice and $\wedge$-sublattice of $\mathscr{P}(X)$ containing $\{\emptyset,X\}$ (in fact, as $\mathscr{P}(X)^\circ$ is a $\bigvee$-sublattice, we automatically have $\emptyset=\bigvee\emptyset\in\mathscr{P}(X)^\circ$).

For another poset central to this paper, consider a C*-algebra $A$, i.e. a Banach *-algebra with $||a^*a||=||a||^2$, for all $a\in A$.  The \emph{positive} elements $A_+$ of $A$ are precisely those of the form $a^*a$, for some $a\in A$.  We order $A_+$ in the usual way by
\[a\leq b\quad\Leftrightarrow\quad b-a\in A_+.\]
As in \cite{Pedersen1979} \S1.5, we call a C*-subalgebra $B$ of $A$ \emph{hereditary} when, for all $a,b\in A_+$,
\begin{equation}
a\leq b\in B\quad\Rightarrow\quad a\in B,
\end{equation}
although they could equivalently be defined as closed self-adjoint bi-ideals (see \cite{Blackadar2013} Proposition II.3.4.2 and Corollary II.5.2.9) or *-annihilators with respect to the canonical action of $A$ on $A^*$ (see \cite{Effros1963} Theorem 2.5).  These hereditary C*-subalgebras $\mathcal{H}(A)$ form a complete lattice\footnote{which is isomorphic to many other lattices defined from $A$, e.g. the lattice of closed left or right ideals in $A$, closed cones in $A_+$ (see \cite{Effros1963} Theorem 2.4), norm filters in (unital) $A^1_+$ (see \cite{Bice2011} Corollary 3.4), weak* closed faces of $A^{*1}_+$ (see \cite{Pedersen1979} Proposition 3.11.9), or open projections in $A''$ (see \cite{Akemann1969} Proposition II.2).  Indeed, we will often work with open projections, as in \cite{Akemann1969}, \cite{Akemann1970} and \cite{Akemann1971}, but $\mathcal{H}(A)$ has the advantage that each $B\in\mathcal{H}(A)$ remains in the category of C*-algebras, so concepts like commutativity naturally carry over.} in which $\bigwedge$ is $\bigcap$.

\subsection{Commutative C*-Algebras}\label{comC*}

From now on, we assume that
\begin{center}\textbf{$X$ is a topological space.}\end{center}
We denote the continuous and bounded continuous functions from $X$ to $\mathbb{C}$ by $C(X)$ and $C^b(X)$ respectively.  With pointwise operations and the supremum norm, $C^b(X)$ becomes a commutative C*-algebra.  For any $Y\subseteq X$, we have a hereditary C*-subalgebra of $C^b(X)$ given by
\[B_Y=\{f\in C^b(X):f[X\setminus Y]=\{0\}\}.\]
On the other hand, for any $B\subseteq C^b(X)$, we have an open subset of $X$ given by
\[O_B=\bigcup_{f\in B}f^{-1}[\mathbb{C}\setminus\{0\}].\]
When $X$ is completely regular, hereditary C*-subalgebras of $C^b(X)$ distinguish open subsets of $X$ in the sense that $O=O_{B_O}$, for all $O\in\mathscr{P}(X)^\circ$.  When $X$ is compact $C^b(X)=C(X)$ and open subsets of $X$ distinguish hereditary C*-subalgebras of $C(X)$ in the sense that $B=B_{O_B}$, for all $B\in\mathcal{H}(C(X))$.  More generally, this holds when $X$ is locally compact and we consider the C*-subalgebra $C_0(X)$ of functions in $C(X)$ that vanish at infinity, i.e. those $f\in C(X)$ such that $f^{-1}[\mathbb{C}\setminus\mathbb{C}^{\epsilon\circ}]$ is compact, for all $\epsilon>0$, where $\mathbb{C}^{\epsilon\circ}=\{\lambda\in\mathbb{C}:|\lambda|<\epsilon\}$.

The Gelfand representation theorem tells us that these are, up to isomorphism, the only commutative C*-algebras.  More specifically, every commutative C*-algebra $A$ is isomorphic to $C_0(X)$ for some locally compact Hausdorff space $X$.  So in this case, by the previous paragraph, we get a natural bijection between open subsets of $X$ and hereditary C*-subalgebras of $A$.  This is why hereditary C*-subalgebras of even non-commutative C*-algebras $A$ are considered to be analogs of open subsets.\footnote{By this criterion, there are also other subsets of $A$ that could be considered as open subset analogs.  For example, we could consider closed ideals, which are precisely the hereditary C*-subalgebras when $A$ is commutative.  However, the closed ideal structure of $A$ can yield little information about (e.g. simple) non-commutative $A$.  Alternatively, we could consider more general closed bi-ideals ($B\subseteq A$ with $\overline{BAB}\subseteq B$) which, again, are precisely the hereditary C*-subalgebras when $A$ is commutative.  But this would take us outside the category of C*-algebras and into the realm of non-self-adjoint operator algebras.}

\subsection{General C*-Algebras}

Another important class of C*-algebras consists of operators on a Hilbert space.  Indeed, by the GNS construction (see \cite{Pedersen1979} \S3.3), every C*-algebra is isomorphic to a C*-subalgebra $A$ of $\mathcal{B}(H)$, the C*-algebra of all bounded linear operators on a Hilbert space $H$.  In this case, following standard practice, we denote the commutant of $A$ in $\mathcal{B}(H)$ by
\begin{equation}\label{commutant}
A'=\{b\in\mathcal{B}(H):\forall a\in A(ab=ba)\},
\end{equation}
and recall that von Neumann's double commutant theorem (see \cite{Pedersen1979} Theorem 2.2.2) says that $A''$ coincides with the weak (and strong) closure of $A$ in $\mathcal{B}(H)$.  We also define the multiplier algebra (see \cite{Pedersen1979} \S3.12) $\mathcal{M}(A)$ of $A$ by
\[\mathcal{M}(A)=\{a\in\mathcal{B}(H):aA,Aa\subseteq A\}.\]
Also important are the \emph{projections} \[\mathcal{P}(A)=\{p\in A:p^2=p=p^*\}.\]  We also define an operation $^\perp$ on projections in $\mathcal{B}(H)$ by $p^\perp=1-p$ and note that \[p\leq q\quad\Leftrightarrow\quad pq^\perp=0.\]

For $a\in A$, let $A_a$ denote the hereditary C*-subalgebra of $A$ generated by $a$, i.e. \[A_a=\overline{a^*Aa}\vee\overline{aAa^*}.\]  More generally, for $a\in A''$, let $A_a=A''_a\cap A$, so if $p\in\mathcal{P}(A'')$ then $A_p=pAp\cap A$.  On the other hand, if $B\in\mathcal{H}(A)$ then we define $p_B=\bigvee B^1_+\in\mathcal{P}(A'')$, where $A^\lambda=\{a\in A:||a||\leq\lambda\}$ denotes the closed $\lambda$-ball about $0$.  Note that $B^1_+$ is generally not a lattice however, as $B$ is a C*-algebra, $B$ has an increasing approximate unit (see \cite{Pedersen1979} Theorem 1.4.2) which has a supremum $\bigvee B^1_+$ in $A''_+$.  We call such projections \emph{open} (see \cite{Pedersen1979} Proposition 3.11.9 for some equivalent definitions), denoting them by $\mathcal{P}(A'')^\circ=\{p_B:B\in\mathcal{H}(A)\}$.  For $p\in\mathcal{B}(H)$, we naturally define the \emph{interior} $p^\circ$ of $p$ to be the largest open projection below $p$, i.e. $p^\circ=p_{A_p}$.  Note that supremums in $\mathcal{P}(A'')^\circ$ agree with supremums in $\mathcal{P}(A'')$ (see \cite{Akemann1969} Proposition II.5\footnote{Throughout \cite{Akemann1969}, $A$ is assumed to be unital and it is the universal representation of $A$ that is considered.  However, these assumptions are not necessary for this particular result.  Indeed, given $p,q\in\mathcal{P}(A'')^\circ$, we certainly have $p,q\leq(p\vee q)^\circ$ and hence $p\vee q\leq(p\vee q)^\circ\leq p\vee q$.}), but the same can not be said for infimums (see \cite{Akemann1969} Example II.6).  We also define the \emph{closure} $\overline{p}$ of $p$ by $\overline{p}=p^{\perp\circ\perp}$ and call $p$ \emph{closed} when $p=\overline{p}$, i.e. when $p^\perp$ is open.  The closed projections will be denoted by $\overline{\mathcal{P}(A'')}$ which, as a poset, is the dual of $\mathcal{P}(A'')^\circ$.

\subsection{Ideals}

Let $\mathcal{I}(A)$ denote the closed ideals of $A$.  We have $\mathcal{I}(A)\subseteq\mathcal{H}(A)$ (see \cite{Pedersen1979} Theorem 1.5.2 and Corollary 1.5.3) and the corresponding open projections are precisely those in $A'$ (see \cite{Pedersen1979} 3.11.10).  In fact, $p^\circ$ (and $\overline{p}$) lies in $A'$ whenever $p\in\mathcal{P}(A''\cap A')$, i.e.
\[\mathcal{P}(A''\cap A')^\circ=\mathcal{P}(A'')^\circ\cap A'=\{p_I:I\in\mathcal{I}(A)\}.\]
To see this, take $I\in\mathcal{I}(A)$ and note that $I''$ is then a weakly closed ideal in $A''$ with unit $p_I$.  Thus, for any $a\in A$, we have $ap_I,p_Ia\in I''$ and hence $ap_I=p_Iap_I=p_Ia$, i.e. $p_I\in A'$.  While if $p\in\mathcal{P}(A''\cap A')$ then $p^\perp\in A'$ so, for any $a,b\in A$ with $ap^\perp=0$, we have $abp^\perp=ap^\perp b=0$ and $bap^\perp=0$, i.e. $A_p=\{a\in A:ap^\perp=0\}\in\mathcal{I}(A)$ and hence $p^\circ=p_{A_p}\in\{p_I:I\in\mathcal{I}(A)\}$.

For any $a\in A''_+$, we define the \emph{central cover} $\mathrm{c}(a)$ as in \cite{Pedersen1979} 2.6.2, specifically $\mathrm{c}(a)=\bigwedge\{a'\in(A''\cap A')_+:a\leq a'\}$.  Likewise, any $B\in\mathcal{H}(A)$ has an \emph{ideal cover} $\overline{\mathrm{span}}(ABA)=\bigcap\{I\in\mathcal{I}(A):B\subseteq I\}$.  In fact, these covers correspond in the sense that, for any $B\in\mathcal{H}(A)$, we have
\[\mathrm{c}(p_B)=p_{\overline{\mathrm{span}}(ABA)}.\]  For $p_{\overline{\mathrm{span}}(ABA)}\in A''\cap A'$ and $p_B\leq p_{\overline{\mathrm{span}}(ABA)}$ so $\mathrm{c}(p_B)\leq p_{\overline{\mathrm{span}}(ABA)}$.  But also $A_{\mathrm{c}(p_B)}\in\mathcal{I}(A)$ and $B\subseteq A_{\mathrm{c}(p_B)}$ so $\overline{\mathrm{span}}(ABA)\subseteq A_{\mathrm{c}(p_B)}$ hence $p_{\overline{\mathrm{span}}(ABA)}\leq\mathrm{c}(p_B)$.

\subsection{The Reduced Atomic Representation}

From now on we assume that
\begin{center}\textbf{$A$ is a C*-algebra identified with its reduced atomic representation,}\end{center}
i.e. that $H$ is a direct sum of Hilbert spaces coming from irreducible representations (see \cite{Pedersen1979} 3.13.1) $\hat{A}$ of $A$, one for each unitary equivalence class in $\hat{A}$.  Equivalently, we assume that $A$ is identified with a C*-subalgebra of $\mathcal{B}(H)$ such that every pure state (i.e. extreme point of $A^{*1}_+$ \textendash\, see \cite{Pedersen1979} 3.10.1) on $A$ is of the form $\phi_v$ (where $\phi_v(a)=\langle av,v\rangle$, for all $a\in A$) for some unique $v\in H$.  This means that open projections distinguish hereditary C*-subalgebras in the sense that $B=A_{p_B}$, for all $B\in\mathcal{H}(A)$.  For we certainly have $B\subseteq A_{p_B}$, and if this inclusion were strict then, by \cite{Pedersen1979} Lemma 3.13.5, we would have a pure state $\phi$ on $A$ with $B\subseteq\phi^\perp\nsupseteq A_{p_B}$, where
\begin{equation}\label{phiperp}
\phi^\perp=\{a\in A:\phi(a^*a)=\phi(aa^*)=0\}
\end{equation}
which is determined by some $v\in\mathcal{R}(p_{A_{p_B}}-p_B)$, a contradiction.  Thus
\[\mathcal{P}(A'')^\circ\cong\mathcal{H}(A),\quad\textrm{via $p\mapsto A_p$ and $B\mapsto p_B$}.\]
So any order theoretic question or result about $\mathcal{H}(A)$ has an equivalent formulation in $\mathcal{P}(A'')^\circ$, and an equivalent dual formulation in $\overline{\mathcal{P}(A'')}$, and we will often find it convenient to work with $\mathcal{P}(A'')^\circ$ or $\overline{\mathcal{P}(A'')}$ instead.

\section{Cocompactness and Compactness}\label{C&C}

\begin{dfn}
$\mathbb{P}$ is \emph{compact} if $\forall S\subseteq\mathbb{P}$, $\bigvee S=1\Rightarrow\bigvee F=1$, for some finite $F\subseteq S$.
\end{dfn}

Note that $X$ is compact precisely when $\mathscr{P}(X)^\circ$ is compact by the above definition.\footnote{which is standard in point-free topology \textendash\, see \cite{PicadoPultr2012} Ch VII.  More generally, in lattice theory an element $p\in\mathbb{P}$ is said to be \emph{compact} if, $\forall S\subseteq\mathbb{P}$, $p\leq\bigvee S\Rightarrow p\leq \bigvee F=1$, for some finite $F\subseteq S$.  So $\mathbb{P}$ is a compact poset precisely when $1$ is a compact element.  However, this definition of a compact element only identifies the compact open subsets, and when it comes to open set lattices it is rather the cocompact sets we are most interested in.  Thus we give a different order theoretic definition of cocompactness in \autoref{cocomp}, and define compact projections in \autoref{compact} in the algebraic way more standard in C*-algebra theory.} And any locally compact $X$ is compact precisely when $C_0(X)$ is unital.  For this reason it is often said that unital C*-algebras are non-commutative analogs of compact topological spaces.  With the above definition we can make this more precise and identify unitality of $A$ purely from the order structure of $\mathcal{H}(A)$.

\begin{prp}\label{ucomp}
$A$ is unital precisely when $\mathcal{H}(A)$ is compact.
\end{prp}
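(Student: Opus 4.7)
The plan is to use the open projection correspondence $\mathcal{H}(A) \cong \mathcal{P}(A'')^\circ$ from the preliminaries and treat the two directions separately via approximate units and functional calculus.

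For the forward direction, suppose $A$ is unital and take $S \subseteq \mathcal{H}(A)$ with $\bigvee S = A$. Passing to the directed family $S' = \{\bigvee F : F \subseteq S \text{ finite}\}$, which has the same supremum, and using that the supremum of a directed family of hereditary C*-subalgebras coincides with the closure of their union (via the correspondence with closed left ideals noted in the preliminaries), we get $A = \overline{\bigcup S'}$. Pick $b_n \in B_n \in S'$ with $b_n \to 1$; after replacing $b_n$ with $b_n^* b_n$ we may take $b_n \in (B_n)_+$. For $n$ large, $\|b_n - 1\| < \tfrac{1}{2}$ gives $b_n \geq \tfrac{1}{2} 1$, so by heredity of $B_n$ we get $\tfrac{1}{2} 1 \in B_n$, hence $1 \in B_n$ and $B_n = A$. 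Writing $B_n = \bigvee F_n$ for some finite $F_n \subseteq S$ then furnishes the required finite subcover.

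For the backward direction, assume $\mathcal{H}(A)$ is compact. The family $\{A_a : a \in A^1_+\}$ is directed (since $A_a, A_b \subseteq A_{(a+b)/2}$ with $(a+b)/2 \in A^1_+$), with join $A$ because its union contains $A^1_+$ and so generates $A$. Compactness yields a finite subfamily with join $A$, whose upper bound in this directed set must itself equal $A$; so $A_a = A$ for some $a$, i.e.\ $a$ is strictly positive. Applying the same argument to the linearly ordered family $\{A_{f_\varepsilon(a)} : 0 < \varepsilon < \|a\|\}$ with $f_\varepsilon(t) = (t - \varepsilon)_+$, whose open projections $\chi_{(\varepsilon, \infty)}(a)$ have join $\chi_{(0, \infty)}(a) = p_{A_a} = 1$, yields $\varepsilon > 0$ with $A_{f_\varepsilon(a)} = A$, equivalently $a \geq \varepsilon 1$ in $A''$. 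Setting $g(t) = \min(t/\varepsilon, 1)$, which is continuous with $g(0) = 0$, the continuous functional calculus places $g(a)$ in $A$, while $g \equiv 1$ on $\sigma_{A''}(a) \subseteq [\varepsilon, \|a\|]$ forces $g(a) = 1_{A''}$; so $1 \in A$ and $A$ is unital.

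The main technical point I anticipate is the appeal in the forward direction to the fact that the supremum of a directed family of hereditary C*-subalgebras coincides with the closure of their union; this should follow routinely from the analogous and more obvious fact for closed left ideals, via the correspondence listed in the preliminaries. Alternatively, one may shortcut both directions by invoking Akemann's characterization of compact projections by the Heine--Borel property: $A$ is unital iff $1 = p_A$ is compact (being dominated by $1 \in A^1_+$) iff every open cover of $1$ in $\mathcal{P}(A'')$ admits a finite subcover iff $\mathcal{H}(A)$ is compact.
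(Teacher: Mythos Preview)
Your proof is correct. The ingredients are the same as the paper's---the directed family $\{A_a:a\in A_+\}$ and the spectral projections $a_{(\epsilon,\infty)}$---but the organization differs in two minor respects worth noting.

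For the forward direction, the paper simply observes that $1$ is q-compact and invokes \cite{Akemann1971} Theorem II.7, whereas you give a self-contained elementary argument via approximating $1$ from inside a directed union and using heredity to trap $1$ in some finite join. Your route avoids the external citation at essentially no cost; the only point to justify carefully (as you note) is that the join of a directed family in $\mathcal{H}(A)$ is the closure of the union, which follows immediately from the bi-ideal characterization of heredity since $(\overline{\bigcup B_i})A(\overline{\bigcup B_i})\subseteq\overline{\bigcup B_i}$ once the $B_i$ are directed. For the backward direction, the paper argues contrapositively with a case split (either $A=A_a$ for some $a$, or not), while you argue directly: compactness applied to $\{A_a\}$ produces a strictly positive $a$, and then compactness applied to $\{A_{f_\epsilon(a)}\}$ forces $\sigma(a)$ bounded away from $0$, whence a suitable $g(a)\in A$ equals $1_{A''}$. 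This is the same content run forwards rather than backwards, and arguably a bit cleaner since it avoids the case distinction. Your closing remark about the Akemann shortcut is exactly what the paper does for one direction.
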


\begin{proof}
If $A$ is unital then $1$ is q-compact, in the terminology of \cite{Akemann1971}, and hence $\mathcal{H}(A)$ is compact, by (the order theoretic dual of) \cite{Akemann1971} Theorem II.7.

Now assume $A$ is not unital.  If $A=A_a$, for some $a\in A_+$, then $0$ is a limit point of $\sigma(a)$, otherwise $a^\perp_{\{0\}}$ would be a unit in $A$.  Thus $A_{a_{(\epsilon_n,\infty)}}$ is a strictly increasing sequence in $\mathcal{H}(A)$, for some strictly decreasing $\epsilon_n\rightarrow0$, with $\bigvee A_{a_{(\epsilon_n,\infty)}}=A$, i.e. $\mathcal{H}(A)$ is not compact.  On the other hand, if $A\neq A_a$, for any $a\in A_+$, then $(A_a)_{a\in A_+}$ is an upwards directed subset of $\mathcal{H}(A)$ (as $A_{a+b}=A_a\vee A_b$ for all $a,b\in A_+$) with no maximum, even though $A=\bigvee_{a\in A_+}A_a$, i.e. $\mathcal{H}(A)$ is again not compact.
\end{proof}

\begin{dfn}\label{cocomp}
We call $p\in\mathbb{P}$ \emph{$\bigvee$-cocompact} if $\forall S\subseteq\mathbb{P}$, $p\vee\bigvee S=1\Rightarrow p\vee\bigvee F=1$, for some finite $F\subseteq S$.
\end{dfn}

Thus $\mathbb{P}$ is compact precisely when $0$ is $\bigvee$-cocompact by the above definition.  Moreover, $O\in\mathscr{P}(X)^\circ$ is $\bigvee$-cocompact precisely when $X\setminus O$ is a compact subset of $X$.  We also have a more algebraic notion of compactness.  Specifically, when $X$ is locally compact and we identify $C_0(X)''$ with $B(X)=($all arbitrary bounded functions from $X$ to $\mathbb{C})$, we see that any closed $p\in\mathcal{P}(B(X))$ is the characteristic function of a compact subset of $X$ precisely when $ap=p$, for some $a\in C_0(X)^1_+$.  This motivates the following standard definition.

\begin{dfn}\label{compact}
We call $p\in\overline{\mathcal{P}(A'')}$ \emph{compact} when $ap=p$, for some $a\in A$.
\end{dfn}

In \cite{Akemann1971} Theorem II.7 it was shown that $p$ is $\bigvee$-cocompact in $\mathcal{P}(A'')^\circ$ whenever $p^\perp$ is compact, and \cite{Akemann1971} Conjecture II.2 predicted that the converse holds (even among arbitrary regular $p\in\mathcal{P}(A'')$).  The following example refutes this conjecture.

\begin{xpl}
Take $P,Q\in\mathcal{P}(M_2)$ with $0<||PQ||<1$ and let $A$ be the C*-subalgebra of $C([0,1],M_2)$ of functions $f$ with $f(0)\in\mathbb{C}P$.  Identify $A''$ in the usual way with all bounded functions $f$ from $[0,1]$ to $M_2$ with $f(0)\in\mathbb{C}P$.  Define $q\in\mathcal{P}(A'')^\circ$ by $q(0)=0$ and $q(x)=Q$ otherwise.  As $[0,1]$ is compact and $P\neq Q$, $q$ is $\bigvee$-cocompact in $\mathcal{P}(A'')^\circ$.  But $q^\perp(x)=Q^\perp\neq P$, for all $x>0$, so $q^\perp$ is not compact.
\end{xpl}

On the other hand, in this example $r^\perp$ is compact, where $r(0)=0$ and $r(x)=P^\perp$ otherwise.  Any auto-homeomorphism $h$ of $\mathcal{P}(M_2)$ leaving $0$, $P$ and $1$ fixed gives rise to an order automorphism $\theta_h$ of $\mathcal{P}(A'')^\circ$ defined by $\theta_h(p)(x)=h(p(x))$.  If we further require that $h(P^\perp)=Q$ then $\theta_h(r)=q$ and hence $\theta_h$ does not preserve compactness (more precisely, it does not preserve the property `$p^\perp$ is compact').  Thus the compact elements of $\overline{\mathcal{P}(A'')}$ do not even admit any order theoretic characterization.

However, the injection of a little more algebra allows for a partial verification of \cite{Akemann1971} Conjecture II.2.

\begin{prp}
Any $p\in\mathcal{P}(\mathcal{M}(A))$ is $\bigvee$-cocompact in $\mathcal{P}(A'')^\circ$ iff $p^\perp$ is compact.
\end{prp}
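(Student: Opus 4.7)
The plan is to use the forward direction ($p^\perp$ compact $\Rightarrow$ $p$ is $\bigvee$-cocompact) as already known: it is a special case of \cite{Akemann1971} Theorem II.7 (which was cited just above the proposition and requires no multiplier assumption). For the converse, the crucial observation is that, when $p \in \mathcal{P}(\mathcal{M}(A))$, the condition $ap^\perp = p^\perp$ for some $a \in A$ is equivalent to $p^\perp \in A$: indeed, if $p^\perp \in A$ just take $a = p^\perp$, and conversely any such $a$ gives $p^\perp = a p^\perp \in A$ since $p^\perp \in \mathcal{M}(A)$. So the real task is to show that a $\bigvee$-cocompact multiplier projection has $p^\perp \in A$.

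The strategy is to apply $\bigvee$-cocompactness twice. First, since $p^\perp \in \mathcal{M}(A)$ we have $A_{p^\perp} = p^\perp A p^\perp$, a hereditary C*-subalgebra with open projection $p^\perp$. For each $e \in (p^\perp A p^\perp)^1_+$ we have $A_e \subseteq A_{p^\perp}$, and ranging over all such $e$ gives $\bigvee_e A_e = A_{p^\perp}$, hence $\bigvee_e p_{A_e} = p^\perp$ and so $p \vee \bigvee_e p_{A_e} = 1$. Cocompactness yields finitely many $e_1,\dots,e_n$ whose join already equals $1$; using $A_{a+b} = A_a \vee A_b$ (from the proof of \autoref{ucomp}), the element $e = \tfrac{1}{n}(e_1 + \dots + e_n)$ lies in $(p^\perp A p^\perp)^1_+$ with $p \vee p_{A_e} = 1$. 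Since $pe = 0$ we have $p \perp p_{A_e}$, and as orthogonal joins agree with sums in $\mathcal{P}(A'')$, this forces $p_{A_e} = p^\perp$; i.e.\ $e$ is strictly positive in $A_{p^\perp}$.

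Second, work inside $C^*(e) \subseteq A$. The spectral truncations $(e - 1/n)_+$ lie in $A$ and have open projections $\chi_{(1/n,\infty)}(e)$, whose join over $n$ is $\chi_{(0,\infty)}(e) = p_{A_e} = p^\perp$. Applying $\bigvee$-cocompactness to this increasing sequence produces some $n$ with $p \vee \chi_{(1/n,\infty)}(e) = 1$, and the same orthogonality argument as before collapses this to $\chi_{(1/n,\infty)}(e) = p^\perp$. Hence $\sigma(e) \cap (0, 1/n] = \emptyset$, so $0$ is isolated in $\sigma(e)$, the indicator $\chi_{\sigma(e)\setminus\{0\}}$ is continuous on $\sigma(e)$ and vanishes at $0$, and therefore $p^\perp = \chi_{\sigma(e)\setminus\{0\}}(e) \in C^*(e) \subseteq A$.

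The main conceptual obstacle is recognizing that cocompactness has to be invoked twice: once to replace the whole collection of positive elements of $A_{p^\perp}$ by a single strictly positive $e \in A$, and then again on the spectrum of that $e$ to force a gap at $0$. Without the multiplier hypothesis one cannot expect $p^\perp \in A$ even after producing such an $e$ (as strictly positive elements need not generate unital hereditary subalgebras), which is consistent with the counterexample immediately preceding the proposition.
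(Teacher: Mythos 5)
Your proof is correct and is essentially the paper's argument run in the forward rather than the contrapositive direction: the two applications of $\bigvee$-cocompactness (first to the family $(A_e)_{e\in(p^\perp Ap^\perp)^1_+}$, then to the spectral truncations of the resulting single $e$) correspond exactly to the two cases (non-$\sigma$-unital vs.\ singly generated with $0$ a limit point of the spectrum) in the proof of \autoref{ucomp}, which the paper's proof simply invokes. One trivial quibble: $\chi_{(0,1/n]}(e)=0$ only yields $\sigma(e)\cap(0,1/n)=\emptyset$, not $\sigma(e)\cap(0,1/n]=\emptyset$, but this still isolates $0$ in $\sigma(e)$ and so the conclusion $p^\perp\in C^*(e)$ stands.
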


\begin{proof}
This is esentially the same as the proof of \autoref{ucomp}.  Specifically, the `if' part follows from \cite{Akemann1971} Theorem II.7 (or alternatively one can use the correspondence $q\leftrightarrow\{a\in A^1_+:aq=q\}$ between non-zero compact projections and proper norm filters, and note that a directed union of norm centred subsets is again norm centred and hence contained in a proper norm filter \textendash\, see \cite{Bice2011}).  While if $p\in\mathcal{P}(\mathcal{M}(A))=\mathcal{P}(A'')^\circ\cap\overline{\mathcal{P}(A'')}$ and $p^\perp$ is not compact then we obtain (an increasing sequence or) upwards directed $S\subseteq\mathcal{P}(A'')^\circ$ with no maximum and $\bigvee S=p^\perp$.  Thus $p\vee\bigvee S=1$ even though $p\vee\bigvee F\neq1$ for any finite $F\subseteq S$, i.e. $p$ is not $\bigvee$-cocompact.
\end{proof}

\section{Semicomplements and Strong Orthogonality}\label{S&O}

\begin{dfn}
$p$ is a $\wedge$-semicomplement of $q$ in $\mathbb{P}$ when $p\wedge q=0$.
\end{dfn}

In $\mathscr{P}(X)^\circ$, we see that $N$ and $O$ are $\wedge$-semicomplements precisely when they are disjoint, which means that $B_OB_N=B_OC^b(X)B_N=\{0\}$.  More generally, we define the \emph{orthogonality} $\perp$ and \emph{strong orthogonality} $\triangledown$ relations on $\mathcal{H}(A)$ by
\[B\bot C\ \Leftrightarrow\ BC=\{0\}\qquad\textrm{and}\qquad B\triangledown C\ \Leftrightarrow\ BAC=\{0\}.\]
For all $B,C\in\mathcal{H}(A)$, we immediately see that
\[B\triangledown C\quad\Rightarrow\quad B\bot C\quad\Rightarrow\quad B\cap C=\{0\},\]
and these implications can not be reversed for general non-commutative $A$.  In fact, like with compactness, $\perp$ does not even admit any order theoretic characterization in $\mathcal{H}(A)$.  For example, any permutation of $\mathcal{H}(M_2)$ leaving $\{0\}$ and $M_2$ fixed is an order isomorphism, even though many of these do not preserve the $\perp$ relation.  However, we can obtain order theoretic characterizations of $\triangledown$, which is the primary goal of this section.

For this, it turns out be useful to examine the $\vee$-semicomplement (the notion dual to a $\wedge$-semicomplement) structure of $\mathcal{H}(A)$ in more detail.

\begin{dfn}
$p$ is $\vee$-\emph{separated} from $q$ in $\mathbb{P}$ if $p$ has a $\vee$-semicomplement $r$ with $q\leq r<1$.  We call $p$ \emph{subfit} if $p$ is $\vee$-separated from every $q\ngeq p$.  We call $\mathbb{P}$ itself subfit when every $p\in\mathbb{P}$ is subfit.
\end{dfn}

The term `subfit' comes from \cite{PicadoPultr2012} Ch V \S1, at least with reference to entire posets (rather than individual elements), where it is considered as an analog in point-free topology of the $T_1$ separation axiom.  To see why, we introduce atoms.

\begin{dfn}\label{atom}
An \emph{atom} of $\mathbb{P}$ is a minimal element of $\mathbb{P}\setminus\{0\}$.  We call $D\subseteq\mathbb{P}$ \emph{$\bigvee$-dense} when $p=\bigvee\{q\in D:q\leq p\}$, for all $p\in\mathbb{P}$.  We call $\mathbb{P}$ \emph{atomistic} when the atoms are $\bigvee$-dense in $\mathbb{P}$.
\end{dfn}

Dually, we define \emph{coatom}, \emph{$\bigwedge$-dense}, and \emph{coatomistic}.  If $X$ is a $T_1$ topological space then $\mathscr{P}(X)^\circ$ is coatomistic and hence subfit.  Indeed, if $\mathbb{P}$ is coatomistic and $p\nleq q$ then, as $q$ is the infimum of all coatoms above it, there must be some coatom $r\geq q$ with $r\ngeq p$ and hence $p\vee r=1$.  So $\mathcal{H}(A)$ is coatomistic and hence subfit when $A$ is commutative and, in fact, this easily generalizes to non-commutative $A$.

\begin{prp}\label{HAcoatom}
$\mathcal{H}(A)$ is coatomistic.
\end{prp}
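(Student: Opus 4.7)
The plan is to show every $B\in\mathcal{H}(A)$ is the intersection of those pure state annihilators $\phi^\perp$ (as in \eqref{phiperp}) that contain it, and that each such $\phi^\perp$ is itself a coatom of $\mathcal{H}(A)$. The argument rests on the reduced atomic identity $B=A_{p_B}$ and the isomorphism $\mathcal{P}(A'')^\circ\cong\mathcal{H}(A)$.

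The basic dictionary to set up first is that, for a pure state $\phi=\phi_v$ determined by a unit vector $v\in H$, $B\subseteq\phi^\perp$ iff $p_Bv=0$ iff $v\in\mathcal{R}(p_B^\perp)$: use $B=A_{p_B}$ for one direction and an increasing approximate unit for $B$ converging strongly to $p_B$ for the other. Applying this with $B=\phi_v^\perp$ yields $p_{\phi_v^\perp}\leq q_v^\perp$, where $q_v$ denotes the rank-one projection onto $\mathbb{C}v$; note $q_v\in\mathcal{P}(A'')$, since in the reduced atomic setup the mutually inequivalent irreducible summands force $A''=\prod_\pi\mathcal{B}(H_\pi)$. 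For the reverse inequality $p_{\phi_v^\perp}\geq q_v^\perp$, a Kadison-transitivity argument produces, for each unit $w\perp v$, a self-adjoint $a\in A$ with $av=0$ and $aw=w$; then $a^*a\in\phi_v^\perp$ and $(a^*a)w=w\in\mathcal{R}(p_{\phi_v^\perp})$. Thus $p_{\phi_v^\perp}=q_v^\perp$, and since $q_v$ is minimal nonzero in $\mathcal{P}(A'')$, $\phi_v^\perp$ corresponds to a maximal element of $\mathcal{P}(A'')^\circ\setminus\{1\}$ and so is a coatom of $\mathcal{H}(A)$.

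The coatomistic conclusion is now immediate: given $a\in A\setminus B$, the identity $B=A_{p_B}$ forces $ap_B^\perp\neq 0$ or $p_B^\perp a\neq 0$, producing a unit vector $v\in\mathcal{R}(p_B^\perp)$ with $av\neq 0$ or $a^*v\neq 0$. By the dictionary, $B\subseteq\phi_v^\perp$ while $a\notin\phi_v^\perp$, so every element outside $B$ is excluded by some coatom above $B$. The main obstacle will be the reverse inequality $p_{\phi_v^\perp}\geq q_v^\perp$, which requires a transitivity argument to conjure up elements of $\phi_v^\perp$ acting nontrivially on every $w\perp v$; everything else reduces to careful bookkeeping around $B=A_{p_B}$.
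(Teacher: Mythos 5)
Your proof is correct and follows essentially the same route as the paper: both identify the coatoms as the pure-state annihilators $\phi^\perp$ and show they separate elements of $A\setminus B$ from $B$; you simply prove directly (via $p_{\phi_v^\perp}=q_v^\perp$ and transitivity) the two facts the paper cites from Pedersen, namely Proposition 3.13.6 ($\phi^\perp$ is a coatom for pure $\phi$) and Lemma 3.13.5 (the separation). The only bookkeeping to add is that the unit vector $v\in\mathcal{R}(p_B^\perp)$ must be chosen inside a single irreducible summand $H_\pi$ (possible since $a$ and $p_B^\perp$ both respect the decomposition $A''=\prod_\pi\mathcal{B}(H_\pi)$), so that $\phi_v$ is actually pure and $q_v\in\mathcal{P}(A'')$, and that when $w$ lies in a different summand from $v$ the transitivity step requires the standard joint version for finitely many inequivalent irreducible representations.
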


\begin{proof}
If $B,C\in\mathcal{H}(A)$ and $B\nsubseteq C$ then there is a pure state $\phi$ on $A$ with $B\nsubseteq\phi^\perp\supseteq C$ (see \eqref{phiperp}).  As $\phi$ is pure, $\phi^\perp$ is a coatom in $\mathcal{H}(A)$, by \cite{Pedersen1979} Proposition 3.13.6.  Thus any element of $\mathcal{H}(A)$ below all coatoms greater than $C$ is below $C$, i.e. $C$ is the infimum of all such coatoms.  As $C$ was arbitrary, the coatoms are $\bigwedge$-dense in $\mathcal{H}(A)$.
\end{proof}

For another topological property related to coatoms, we introduce the following.

\begin{dfn}
We call $p\in\mathbb{P}$ \emph{$\wedge$-irreducible} if $p=q\wedge r\Rightarrow p\in\{q,r\}$, for all $q,r\in\mathbb{P}$.
\end{dfn}

Every coatom in $\mathbb{P}$ is $\wedge$-irreducible.  Also $X\setminus\overline{\{x\}}$ is $\wedge$-irreducible in $\mathscr{P}(X)^\circ$, for all $x\in X$.  We call $X$ \emph{sober} if $\mathscr{P}(X)^\circ$ has no other $\wedge$-irreducibles apart from $X$.  Among $T_0$ spaces, $T_1+$sobriety is actually a property of the lattice $\mathscr{P}(X)^\circ$,\footnote{even though neither $T_1$ nor sobriety is, individually, such lattice property (see \cite{PicadoPultr2012} I.3.1)} specifically
\[X\textrm{ is }T_1\textrm{ and sober}\ \Leftrightarrow\ X\textrm{ is $T_0$ and every $\wedge$-irreducible in $\mathscr{P}(X)^\circ$ is a coatom or }X.\]  It would be interesting to know if this also holds in $\mathcal{H}(A)$.

\begin{qst}
Is every $\wedge$-irreducible in $\mathcal{H}(A)$ a coatom?
\end{qst}

On the other hand, $\wedge$-irreducibles in $\mathcal{I}(A)$ are much more well-known.  Indeed, they are precisely the \emph{prime} $I\in\mathcal{I}(A)$, i.e. satisfying $aIb\subseteq I\Rightarrow a\in I$ or $b\in I$, for all $a,b\in A$ (see \cite{Pedersen1979} 3.13.7).  We also call $I\in\mathcal{I}(A)$ \emph{primitive} if $I$ is the largest element of $\mathcal{I}(A)$ contained in some coatom $B\in\mathcal{H}(A)$, which is equivalent saying $I$ is the kernel of some $\pi\in\hat{A}$ (see \cite{BorceuxRosickyBossche1989}).  Coatoms in $\mathcal{I}(A)$ are usually just called maximal, and we have the following relationships between these concepts in $\mathcal{I}(A)$ (see \cite{Pedersen1979} Proposition 3.13.10)
\[\textrm{maximal}\quad\Rightarrow\quad\textrm{primitive}\quad\Rightarrow\quad\textrm{prime}.\]
Note that none of these implications can be reversed in general, e.g. $\{0\}$ is primitive but not maximal in $\mathcal{B}(H)$ when $H$ is infinite dimensional, while $\{0\}$ is prime but not primitive for the $A$ constructed in \cite{Weaver2003}.

An important property of (arbitrary) $I\in\mathcal{I}(A)$ we will need is the following.

\begin{prp}\label{Idist}
For $B,C\in\mathcal{H}(A)$ and $I\in\mathcal{I}(A)$, $I\wedge(B\vee C)=(I\wedge B)\vee(I\wedge C)$.
\end{prp}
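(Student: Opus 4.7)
My plan is to translate the identity into the open projection lattice $\mathcal{P}(A'')^\circ$ via the isomorphism $B\mapsto p_B$ recorded in the preliminaries, and then exploit the fact (also established there) that $p_I\in A''\cap A'$ is central in $A''$. The inclusion $\supseteq$ is free from the monotonicity of $\wedge$ and $\vee$, so only $\subseteq$ requires work.

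The key auxiliary claim I would first establish is $p_{I\cap B}=p_Ip_B$ for every $B\in\mathcal{H}(A)$. The inequality $p_{I\cap B}\leq p_I\wedge p_B=p_Ip_B$ (using that the commuting projections $p_I,p_B$ meet at their product) is immediate from $(I\cap B)^1_+\subseteq I^1_+\cap B^1_+$. For the reverse, given approximate units $(e_\lambda)\subseteq B^1_+$ for $B$ and $(f_\mu)\subseteq I^1_+$ for $I$, each $e_\lambda f_\mu e_\lambda$ is positive of norm $\leq 1$, lies in $I$ (since $I$ is a two-sided ideal and $f_\mu\in I$), and lies in $B$ (since $e_\lambda f_\mu e_\lambda\leq e_\lambda^2\leq e_\lambda\in B$ and $B$ is hereditary); hence $e_\lambda f_\mu e_\lambda\leq p_{I\cap B}$. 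Letting $\mu\to\infty$ strongly and sliding $p_I$ past $e_\lambda$ by centrality gives $p_Ie_\lambda^2\leq p_{I\cap B}$, and then letting $\lambda\to\infty$ gives $p_Ip_B\leq p_{I\cap B}$.

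Applying this auxiliary identity to $B$, $C$ and $B\vee C$, and using that suprema in $\mathcal{P}(A'')^\circ$ agree with those in $\mathcal{P}(A'')$, the desired equality reduces to the projection-lattice statement $p_I(p_B\vee p_C)=p_Ip_B\vee p_Ip_C$. But this is a general fact about central projections: the decomposition $A''=p_IA''\oplus p_I^\perp A''$ as a direct sum of von Neumann algebras represents any $q\in\mathcal{P}(A'')$ by the pair $(p_Iq,p_I^\perp q)$, and arbitrary joins of projections in $A''$ are computed componentwise, so multiplication by a central projection commutes with $\vee$. I expect the only delicate point to be the auxiliary identity $p_{I\cap B}=p_Ip_B$, since that is where centrality of $p_I$ is genuinely needed to produce elements of $I\cap B$ from the separate approximate units; once it is in hand, the rest of the argument is a routine computation inside the central direct summand $p_IA''$.
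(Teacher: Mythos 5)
Your proof is correct and follows essentially the same route as the paper's: pass to open projections and reduce the identity to the distributive law $p_I(p_B\vee p_C)=p_Ip_B\vee p_Ip_C$ for the central projection $p_I$ in $\mathcal{P}(A'')$, using that $\vee$ agrees in $\mathcal{P}(A'')^\circ$ and $\mathcal{P}(A'')$ and that $\wedge$ of commuting open projections is their product. Your approximate-unit argument for $p_{I\cap B}=p_Ip_B$ merely spells out a step the paper asserts without proof.
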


\begin{proof}
Equivalently, we need to prove that $p\wedge(q\vee r)=(p\wedge q)\vee(p\wedge r)$ in $\mathcal{P}(A'')^\circ$, whenever $p\in A'$.  But $\vee$ agrees in $\mathcal{P}(A'')^\circ$ and $\mathcal{P}(A'')$, as does $\wedge$ for commuting projections, so it suffices to verify the formula in $\mathcal{P}(A'')$.  As $q=pq+p^\perp q=pq\vee p^\perp q$ and $r=pr+p^\perp r=pr\vee p^\perp r$, we have
\[p\wedge(q\vee r)=p((pq\vee pr)+(p^\perp q\vee p^\perp r))=pq\vee pr=(p\wedge q)\vee(p\wedge r).\]
\end{proof}

To accurately describe some equivalents of the $\triangledown$ relation below, we introduce some more notation.  Firstly, let $\oplus$ denote the usual (interior) direct sum of vector spaces, so $A=B\oplus C$ means $B$ and $C$ are complementary in the lattice of (arbitary) subspaces of $A$.  Also define polarites (i.e. order reversing operations) $^\perp$ and $^\triangledown$ on $\mathscr{P}(A)$ as in \cite{Bice2014c} by
\[B^\perp=\{a\in A:\forall b\in B(ba=0=ba^*)\}\quad\textrm{and}\quad B^\triangledown=\{a\in A:\forall b\in B(bAa=\{0\})\}.\]
The elements of $\mathscr{P}(A)^\perp=\{B^\perp:B\in\mathscr{P}(A)\}$ and $\mathscr{P}(A)^\triangledown=\{B^\triangledown:B\in\mathscr{P}(A)\}$ are called \emph{*-annihilators} and \emph{annihilator ideals} respectively.

\begin{thm}\label{trieq}
For $B,C\in\mathcal{H}(A)$, the following are equivalent.
\begin{enumerate}
\item\label{BtriC} $B\triangledown C$.
\item\label{tritri} $B^{\triangledown\triangledown}\cap C^{\triangledown\triangledown}=\{0\}$.
\item\label{ideal} $\overline{\mathrm{span}}(ABA)\cap C=\{0\}$.
\item\label{semiideal} $ABA\cap C=\{0\}$.
\item\label{dsum} $B\vee C=B\oplus C$.
\item\label{semicen} $D=(B\vee D)\wedge(C\vee D)$, for all $D\in\mathcal{H}(A)$.
\item\label{del} $B\wedge D=B\wedge(C\vee D)$, for all $D\in\mathcal{H}(A)$.
\item\label{vsemi} Every $\vee$-semicomplement of $C$ in $\mathcal{H}(A)$ contains $B$.
\item\label{mirr} Every $\wedge$-irreducible in $\mathcal{H}(A)$ contains $B$ or $C$.
\item\label{coatom} Every coatom in $\mathcal{H}(A)$ contains $B$ or $C$.
\item\label{prime} Every prime $I\in\mathcal{I}(A)$ contains $B$ or $C$.
\item\label{prim} Every primitive $I\in\mathcal{I}(A)$ contains $B$ or $C$.
\item\label{aa*} $aa^*\in B$ and $a^*a\in C\Rightarrow a=0$, for all $a\in A$.
\end{enumerate}
\end{thm}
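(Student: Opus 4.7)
My plan is a hub-and-spoke structure around \ref{BtriC}: show \ref{BtriC} implies each other condition and close the cycle via short returns.  Two observations drive the argument.  The \emph{algebraic driver}: for $b\in B$, $a\in A$, $c\in C$, the product $x = bac$ satisfies $xx^* = b(acc^*a^*)b^* \in BAB \subseteq B$ and $x^*x = c^*(a^*b^*ba)c \in CAC \subseteq C$, since hereditary C*-subalgebras are bi-ideals.  The \emph{lattice driver}: translating through open projections, \ref{BtriC} is equivalent to $p_BA''p_C = \{0\}$, which in turn is equivalent to central orthogonality $\mathrm{c}(p_B)\mathrm{c}(p_C) = 0$.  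This lets us replace $B, C$ by their ideal covers $I_B = \overline{\mathrm{span}}(ABA)$ and $I_C = \overline{\mathrm{span}}(ACA)$, which then satisfy $I_B \cap I_C = \{0\}$.

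The algebraic equivalences follow from the algebraic driver.  Forward from \ref{BtriC}: $C^\triangledown$ is a closed self-adjoint ideal containing $B$ and hence $\overline{\mathrm{span}}(ABA)$, while $C \cap C^\triangledown = \{0\}$ by an approximate-unit argument ($x \in C \cap C^\triangledown$ gives $x^*Ax \subseteq CAC^\triangledown = \{0\}$, so $x^*x = 0$), yielding \ref{ideal} and thus \ref{semiideal}; the same argument applied to $B^{\triangledown\triangledown} \subseteq C^\triangledown$ and $C^{\triangledown\triangledown}\cap C^\triangledown = \{0\}$ gives \ref{tritri}; the identity $(aa^*)a(a^*a) = 0$ together with positivity of $aa^*$ forces $aa^* = 0$ and hence \ref{aa*}; and \ref{dsum} follows because $BC = CB = \{0\}$ makes $B+C$ a C*-subalgebra, while $p_BA''p_C = \{0\}$ makes $B+C$ hereditary via the decomposition $a = p_Bap_B + p_Cap_C$ whenever $0 \leq a \leq b+c$.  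The returns run: \ref{semiideal} via $x^*x \in ABA \cap C$; \ref{tritri} via $x^*x \in ABA \cap C \subseteq B^{\triangledown\triangledown} \cap C^{\triangledown\triangledown}$; \ref{aa*} by applying it to $b^{1/2}ac^{1/2}$ for $b \in B_+, c \in C_+$; and \ref{dsum} via the observation that $A_{bac} \subseteq B \vee C$ (since $(bac)A(bac)^* \subseteq BAB \subseteq B$ and symmetrically) decomposes as $B' \oplus C'$, so $bac = b' + c'$, whence $(bac)(bac)^* = b'(b')^* + c'(c')^* \in B$ forces $c'(c')^* \in B \cap C = \{0\}$, hence $bac = 0$.

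The lattice conditions hinge on the lattice driver.  For \ref{semicen}, expanding in $\mathcal{P}(A'')$ using that $\mathrm{c}(p_B)$ and $\mathrm{c}(p_C)$ are central and orthogonal gives $(\mathrm{c}(p_B) \vee p_D)(\mathrm{c}(p_C) \vee p_D) = p_D$, which bounds $(p_B\vee p_D)\wedge(p_C\vee p_D) \leq p_D$ from above while the reverse is trivial; equality transfers to $\mathcal{P}(A'')^\circ$ since $p_D$ is open.  For \ref{del}, \autoref{Idist} applied to $I_B$ yields $I_B \cap (D \vee C) = (I_B \cap D) \vee (I_B \cap C) = I_B \cap D$; intersecting with $B \subseteq I_B$ gives \ref{del}.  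Both \ref{semicen} and \ref{del} specialize to \ref{vsemi} by taking $D$ with $D \vee C = 1$.  The equivalence \ref{vsemi}$\Leftrightarrow$\ref{coatom} is formal modulo \autoref{HAcoatom}: a coatom not containing $C$ is a $\vee$-semicomplement of $C$, and every $\vee$-semicomplement is the intersection of the coatoms above it.  For \ref{coatom}$\Leftrightarrow$\ref{mirr}, any $\wedge$-irreducible $M$ decomposes by coatomisticity as $M = M_B \wedge M_C$ with $M_B = \bigcap\{N\text{ coatom}: N \geq M, B \subseteq N\}$ and $M_C$ analogous; $\wedge$-irreducibility forces $M = M_B$ or $M_C$.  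To close the loop, \ref{coatom}$\Rightarrow$\ref{aa*}: coatoms are $\phi^\perp$ for pure states $\phi$, and in the reduced atomic representation $\bigcap_\phi \phi^\perp = \{0\}$, so $aa^*\in B, a^*a\in C$ forces $a \in \phi^\perp$ for every pure $\phi$, hence $a = 0$.

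For the ideal conditions: \ref{BtriC}$\Rightarrow$\ref{prime} is direct, since choosing $c_0 \in C\setminus I$ gives $bAc_0 \subseteq BAC = \{0\} \subseteq I$, forcing $b \in I$ for every $b\in B$; \ref{prime}$\Rightarrow$\ref{prim} is trivial; \ref{prim}$\Rightarrow$\ref{BtriC} uses that the Jacobson radical of a C*-algebra is $\{0\}$, so $bac$ lies in every primitive ideal and hence equals zero.  The main obstacle is the central-cover computation underpinning the lattice implications, together with the careful matching of wedges in $\mathcal{P}(A'')$ versus $\mathcal{P}(A'')^\circ$, and the decomposition argument for \ref{dsum}$\Rightarrow$\ref{BtriC}, which depends on the non-obvious fact that $A_{bac}$ actually contains $bac$ (in turn using that a hereditary C*-subalgebra contains any $x$ with both $xx^*$ and $x^*x$ inside it).
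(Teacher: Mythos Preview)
Your overall architecture is sound and close to the paper's: both pivot on the observation that $x=bac$ has $xx^*\in B$ and $x^*x\in C$, and both use \autoref{Idist} to push ideal covers through joins. One genuine improvement in your plan is the direct route \eqref{BtriC}$\Rightarrow$\eqref{semicen} via central covers: computing $(\mathrm{c}(p_B)\vee p_D)\wedge(\mathrm{c}(p_C)\vee p_D)=p_D$ in $\mathcal{P}(A'')$ for commuting projections with $\mathrm{c}(p_B)\mathrm{c}(p_C)=0$ is self-contained, whereas the paper obtains \eqref{semicen} from \eqref{vsemi} by invoking the external reference \cite{MaedaMaeda1970} Theorem (4.18). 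Your \eqref{coatom}$\Leftrightarrow$\eqref{mirr} argument via $M=M_B\wedge M_C$ is also a clean alternative to the paper's one-line \eqref{semicen}$\Rightarrow$\eqref{mirr}.

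There are, however, two genuine gaps. First, in \eqref{dsum}$\Rightarrow$\eqref{BtriC} you write $(bac)(bac)^* = b'(b')^* + c'(c')^*$, silently assuming the cross terms $b'(c')^*+c'(b')^*$ vanish; but nothing you have established yet gives $BC=\{0\}$, so this is unjustified. The paper's fix (for \eqref{dsum}$\Rightarrow$\eqref{aa*}) is to use the closed right ideal: $xx^*\in B$ puts $x$ in $\overline{BA}$, and $b'\in B\subseteq\overline{BA}$, so $c'=x-b'\in\overline{BA}$, whence $c'(c')^*\in B\cap C=\{0\}$. Second, in \eqref{coatom}$\Rightarrow$\eqref{aa*} you claim ``$a\in\phi^\perp$ for every pure $\phi$'', but \eqref{coatom} only gives $\phi(aa^*)=0$ \emph{or} $\phi(a^*a)=0$, not both; membership in $\phi^\perp$ requires both. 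The paper avoids this by routing through \eqref{prim}: each $\pi\in\hat A$ kills $B$ or $C$, hence kills $aa^*$ or $a^*a$, hence kills $a$. You could alternatively salvage your direct argument by observing that within each irreducible summand $H_\pi$ the unit vectors split as $\ker\pi(a)\cup\ker\pi(a^*)$, and a vector space is never the union of two proper subspaces.
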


\begin{proof} We immediately see that \eqref{tritri}$\Rightarrow$\eqref{ideal}$\Rightarrow$\eqref{semiideal}, \eqref{semicen}$\Rightarrow$\eqref{del}$\Rightarrow$\eqref{vsemi}$\Rightarrow$\eqref{coatom}, \eqref{semicen}$\Rightarrow$\eqref{mirr}$\Rightarrow$\eqref{coatom} and \eqref{prime}$\Rightarrow$\eqref{prim}.  The rest of the equivalences are proved as follows.

\begin{itemize}

\item[\eqref{BtriC}$\Rightarrow$\eqref{tritri}] As $B\triangledown C$, we have $B\subseteq C^\triangledown$ and hence $C^{\triangledown\triangledown}\subseteq B^\triangledown$ which, as $B^\triangledown\cap B^{\triangledown\triangledown}=\{0\}$, means $B^{\triangledown\triangledown}\cap C^{\triangledown\triangledown}=\{0\}$.
\item[\eqref{semiideal}$\Rightarrow$\eqref{BtriC}] Take $a\in BAC$.  Then $a^*a\in ABA\cap C=\{0\}$ so $a=0$.

\item[\eqref{ideal}$\Rightarrow$\eqref{vsemi}] Let $I=\overline{\mathrm{span}}(ABA)$.  If $D$ is a $\vee$-semicomplement of $C$ in $\mathcal{H}(A)$ then, by \autoref{Idist}, $I=I\wedge A=I\wedge(C\vee D)=(I\wedge C)\vee(I\wedge D)=I\wedge D$, so $B\subseteq I\subseteq D$.
\item[\eqref{vsemi}$\Rightarrow$\eqref{semicen}] As $\mathcal{H}(A)$ is subfit, by \autoref{HAcoatom}, $\mathcal{H}(A)$ is SSC*, in the terminology of \cite{MaedaMaeda1970}, where this implication appears as Theorem (4.18) $(\beta)\Rightarrow(\gamma)$.
\item[\eqref{coatom}$\Rightarrow$\eqref{prim}] Say we had some primitive ideal which contained neither $B$ nor $C$, i.e. we have $\pi\in\hat{A}$ with $\pi[B]\neq\{0\}\neq\pi[C]$.  Thus we have some $b\in B$, $c\in C$ and $v\in H_\pi$ with $\pi(b)v\neq0\neq\pi(c)v$.  Indeed, if $\pi(p_B)\pi(p_C)\neq0$ then we can pick $v\in\mathcal{R}(\pi(p_B))$ (or $v\in\mathcal{R}(\pi(p_C))$), while if $\pi(p_B)\pi(p_C)=0$ then we can set $v=x+y$ for any $x\in\mathcal{R}(\pi(p_B))\setminus\{0\}$ and $y\in\mathcal{R}(\pi(p_B))\setminus\{0\}$.  As $\pi$ is irreducible, $\phi_v=\langle\pi(\cdot)v,v\rangle$ is a pure state, and hence $\phi_v^\perp$ is a coatom containing neither $B$ nor $C$.
\item[\eqref{prim}$\Rightarrow$\eqref{aa*}] By \eqref{prim}, $\pi(aa^*)=0$ or $\pi(a^*a)=0$, and hence $\pi(a)=0$, for every $\pi\in\hat{A}$.  As $\pi$ was arbitrary, $a=0$.
\item[\eqref{aa*}$\Rightarrow$\eqref{BtriC}] If $a\in BAC$ then $aa^*\in BAB\subseteq B$ and $a^*a\in CAC\subseteq C$ so $a=0$.

\item[\eqref{BtriC}$\Rightarrow$\eqref{dsum}] As $B$ and $C$ are C*-subalgebras of $A$ with $B\perp C$, $B\oplus C$ is also a C*-subalgebra of $A$.  As $B$ and $C$ are hereditary and $BAC=\{0\}$, we also have $(B+C)A(B+C)\subseteq BAB+CAC\subseteq B+C$.  Thus $B\oplus C$ is also hereditary, by \cite{Blackadar2013} Corollary II.5.3.9.
\item[\eqref{dsum}$\Rightarrow$\eqref{aa*}] Take $a\in A$ with $aa^*\in B$ and $a^*a\in C$, so $a\in B\vee C=B\oplus C$, i.e. $a=b+c$, for some $b\in B$ and $c\in C$.  Hence \[aa^*-ab^*-ba^*+bb^*=(a-b)(a^*-b^*)=cc^*\in B\cap C=\{0\}.\]  Thus $c=0$ and, likewise, $b=0$ and hence $a=b+c=0$.

\item[\eqref{BtriC}$\Rightarrow$\eqref{prime}] Assume $B\triangledown C$, which we already know is equivalent to $B^{\triangledown\triangledown}\triangledown C^{\triangledown\triangledown}$.  Say $I\in\mathcal{I}(A)$ contains neither $B$ nor $C$, so $I\subsetneqq I\vee B^{\triangledown\triangledown},I\vee C^{\triangledown\triangledown}$, even though $I=(I\vee B^{\triangledown\triangledown})\wedge(I\vee C^{\triangledown\triangledown})$, by \eqref{semicen}, i.e. $I$ is not $\wedge$-irreducible in $\mathcal{I}(A)$.
\end{itemize}
\end{proof}

Thus \eqref{semicen}, \eqref{del}, \eqref{vsemi}, \eqref{mirr} and \eqref{coatom} give us purely order theoretic characterizations of $\triangledown$ (and in lattice theory \eqref{del} is often also denoted by $\triangledown$ and called the `del' relation).  The same could be said of \eqref{tritri}, \eqref{ideal}, \eqref{prime} and \eqref{prim} once it is known that ideals and annihilator ideals have order theoretic characterizations, as shown in \S\S\ref{*AI} and \ref{D&I}.  Alternatively, note that $B^\triangledown$ is the maximum $C$ in $\mathcal{H}(A)$ or $\mathcal{I}(A)$ with $B\triangledown C$, so the order-theoretic characterizations of $\triangledown$ here yield order theoretic characterizations of annihilator ideals.  Also, it would be interesting to know if $B\triangledown C$ even when $B$ and $C$ are complementary in the lattice of \emph{closed} subspaces of $B\vee C$, i.e. whether \eqref{dsum} can be weakened to $B\vee C=\overline{B+C}$ and $B\cap C=\{0\}$.

Incidentally, one might also consider the algebraic relation $A=B+C$ to be something of a dual to $\triangledown$.  Indeed, it agrees with the $\vee$-semicomplement relation $A=B\vee C$ when $A$ is commutative (as then $B$ and $C$ are ideals so $B\vee C=B+C$ \textendash\, see \cite{Pedersen1979} Corollary 1.5.8), but is significantly stronger for non-commutative $A$.

\begin{qst}
Can some dual to \autoref{trieq} be proved for the relation $A=B+C$?
\end{qst}

\section{Complements and Projections}\label{C&P}

\begin{dfn}
We call $p,q\in\mathbb{P}$ \emph{complementary} when \[p\wedge q=0\quad\textrm{and}\quad p\vee q=1.\]
\end{dfn}

The complements in $\mathscr{P}(X)^\circ$ are precisely the clopen (i.e. closed and open) subsets.  Also, the projections in $C^b(X)(\cong\mathcal{M}(C_0(X))$ when $X$ is locally compact$)$ are precisely the characteristic functions of clopen subsets of $X$.  Thus complements in $\mathcal{H}(A)$ correspond to projections in $\mathcal{M}(A)$ whenever $A$ is commutative.  The motivating question for this section is whether this extends to non-commutative $A$.  Phrased in terms of $\mathscr{P}(A'')^\circ\cong\mathcal{H}(A)$, the following result immediately provides a partial answer.

\begin{prp}\label{pcomp}
If $p\in\mathcal{P}(\mathcal{M}(A))$ then $p$ and $p^\perp$ are complementary in $\mathcal{P}(A'')^\circ$.
\end{prp}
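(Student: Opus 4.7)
The claim has two pieces: first, that $p$ and $p^\perp$ both belong to $\mathcal{P}(A'')^\circ$ at all; second, that the meet and join computed inside this sublattice are $0$ and $1$ respectively. Since $\mathcal{M}(A)$ is a unital $*$-subalgebra of $\mathcal{B}(H)$ containing $A$, the complement $p^\perp = 1-p$ is again a projection in $\mathcal{M}(A)$, so it suffices to prove openness for $p$ and apply the identical argument to $p^\perp$.

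The core of the proof is to identify $p$ with $p_B$, where $B := pAp$. The multiplier hypothesis $pA, Ap \subseteq A$ ensures $pAp \subseteq A$, and this corner is a hereditary C*-subalgebra of $A$. For the inequality $p_B \leq p$, every $a \in B^1_+$ satisfies $a = pap \leq \|a\|\,p \leq p$ inside $pA''p$, so the supremum $p_B = \bigvee B^1_+$ is also $\leq p$. For the reverse inequality, the double commutant theorem gives $A$ strongly dense in $A''$, hence $pAp$ strongly dense in $pA''p$, so $p \in (pAp)'' = B''$. Since $p_B$ is the unit of the von Neumann algebra $B''$, every projection in $B''$ is dominated by $p_B$, giving $p \leq p_B$ and completing the identification.

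With $p, p^\perp \in \mathcal{P}(A'')^\circ$ in hand, the join is immediate from the fact, noted in the preliminaries via Proposition II.5 of \cite{Akemann1969}, that $\vee$ in $\mathcal{P}(A'')^\circ$ agrees with $\vee$ in $\mathcal{P}(A'')$, so $p \vee p^\perp = 1$. The meet is the mildly subtle point, since infimums in $\mathcal{P}(A'')^\circ$ and $\mathcal{P}(A'')$ can disagree in general. However, any lower bound for $\{p, p^\perp\}$ inside $\mathcal{P}(A'')^\circ$ is automatically a lower bound inside $\mathcal{P}(A'')$, hence $\leq p \wedge p^\perp = 0$ in $\mathcal{P}(A'')$; and since $0 \in \mathcal{P}(A'')^\circ$, it is the greatest lower bound within the sublattice as well.

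I expect the identification $p = p_B$ to be the only real obstacle. Both halves of it rely on $p$ lying in $\mathcal{M}(A)$: without this, $pAp$ need not even sit inside $A$, and one loses both the natural candidate $B = pAp$ for the hereditary C*-subalgebra and the use of strong density of $A$ in $A''$ transferred to the corner.
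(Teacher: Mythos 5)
Your proof is correct and follows essentially the same two-step route as the paper: establish that $p$ and $p^\perp$ are both open, then observe that complementarity in $\mathcal{P}(A'')$ passes to the subposet $\mathcal{P}(A'')^\circ$. The only difference is that where the paper obtains openness by citing Pedersen's characterization $\mathcal{P}(\mathcal{M}(A))=\{p\in\mathcal{P}(A''):p^\circ=\overline{p}\}$, you prove it directly via the identification $p=p_{pAp}$, which is a valid self-contained substitute (your one sloppy point, writing $(pAp)''$ for the weak closure of the possibly degenerately acting $pAp$, does not affect the argument since all you need is $p\in\overline{pAp}^{\,w}=pA''p$ and that $p_{pAp}$ is the unit of that weak closure).
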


\begin{proof}
As $\mathcal{P}(\mathcal{M}(A))=\{p\in\mathcal{P}(A''):p^\circ=\overline{p}\}$, by \cite{Pedersen1979} Theorem 3.12.9, $p^\perp\in\mathcal{P}(A'')^\circ$.  But $p$ and $p^\perp$ are complementary in $\mathcal{P}(A'')$ and so certainly in $\mathcal{P}(A'')^\circ$.
\end{proof}

We can also prove the converse, but only when one of the projections is compact.  The proof also requires the following elementary results.

Denote the spectral projection in $A''$ of $a\in A_+$ corresponding to $S\subseteq\mathbb{R}_+$ by $a_S$.  For any $a,b\in A_+$, $\epsilon>0$, and $v\in\mathcal{R}((a+b)_{[0,\epsilon^3]})$, \[\epsilon||a_{(\epsilon,\infty)}v||^2=\epsilon\langle a_{(\epsilon,\infty)}v,v\rangle\leq\langle av,v\rangle\leq\langle(a+b)v,v\rangle\leq\epsilon^3\langle v,v\rangle=\epsilon^3||v||^2,\textrm{ so}\]
\begin{equation}\label{a+beq}
||(a+b)_{[0,\epsilon^3]}a_{(\epsilon,\infty)}||\leq\epsilon.
\end{equation}

Also, for any $p_1,\ldots,p_n\in\mathcal{P}(A)$, $\epsilon>0$ and unit $v\in H$ such that $||p_k^\perp v||\leq\epsilon$, for all $k\leq n$, we have,
\begin{equation}\label{p1pneq}
||p_1\ldots p_n||\geq||p_1\ldots p_nv||\geq||p_1\ldots p_{n-1}v||-\epsilon\geq\ldots\geq1-n\epsilon.
\end{equation}

\begin{thm}\label{unitcomp}
If $p,q\in\mathcal{P}(A'')^\circ$ are complementary and $q^\perp$ is compact then $p\in A$ and $||pq||=||p^\perp q^\perp||<1$.
\end{thm}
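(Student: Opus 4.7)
Since $q^\perp$ is compact, the starting point is to fix a witness $a \in A^1_+$ with $aq^\perp = q^\perp$ (we may take $a$ self-adjoint by passing to $\min(a^*a, 1)$); then $q^\perp \leq a$ and $a = q^\perp + qaq$ via orthogonality of $q$ and $q^\perp$, so in particular $1 - a = q(1-a)q \leq q$. The plan is to first establish the norm bound $\|pq\| < 1$ by contradiction using the two spectral estimates \eqref{a+beq} and \eqref{p1pneq}, then deduce $p \in A$ from the resulting spectral gap, and finally obtain $\|pq\| = \|p^\perp q^\perp\|$ from the Halmos two-projection theorem.

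For the norm bound, I would assume $\|pq\| = 1$ for contradiction. Then there exist unit vectors $w_n \in qH$ with $\|p^\perp w_n\| \to 0$, i.e., $w_n$ lies exactly in $qH$ but asymptotically in $pH$. Since $p$ is open, I select $b \in A_p^+$ with $\|b\| \leq 1$ such that the compact spectral projection $b_{(1-\epsilon, 1]}$ sits below $p$ and almost fixes $pw_n$; similarly, $a_{(1-\epsilon, 1]}$ sits above $q^\perp$. The product estimate \eqref{p1pneq} applied to a suitable sequence of such projections together with $p$ (evaluated on $w_n$) yields a lower bound approaching $1$ on the norm of a certain product. On the other hand, applying the spectral estimate \eqref{a+beq} to the sum $a + b \in A_+$ gives $\|(a+b)_{[0,\epsilon^3]} a_{(\epsilon,\infty)}\| \leq \epsilon$, and the complementarity $p \vee q = 1$ (which forces $pH + qH$ dense) combined with $b$ approximating $p$ and $a$ dominating $q^\perp$ should force $w_n$ into $(a+b)_{[0, \epsilon^3]}$'s range while also lying in $a_{(\epsilon, \infty)}$'s range — a contradiction for small $\epsilon$.

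Once $\|pq\| \leq c < 1$ is established, I observe $pq^\perp p = p - pqp \geq (1 - c^2)p$, and the inequality $a \geq q^\perp$ gives $pap \geq pq^\perp p \geq (1-c^2)p$, while $pap \leq p$. Hence the spectrum of $pap$ is contained in $\{0\} \cup [1 - c^2, 1]$ with a uniform gap at $0$. To conclude $p \in A$, I would realize $pap$ as an element of $A$: the approximate identity $(e_\lambda) \subseteq A_p^+$ gives $e_\lambda a e_\lambda \in A$ converging strongly to $pap$, and the spectral gap (with another application of \eqref{a+beq} controlling the Cauchy property of the $e_\lambda a e_\lambda$) should promote this to norm convergence, so $pap \in \overline{A_p} = A_p$. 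Continuous functional calculus inverting $pap$ on $[1-c^2, 1]$ then provides $r \in A_p$ with $r \cdot pap = p$, whence $p \in A_p \subseteq A$. The final identity $\|pq\| = \|p^\perp q^\perp\|$ follows since $p \in A \subseteq \mathcal{M}(A)$ puts $p^\perp$ in $\mathcal{P}(\mathcal{M}(A))$, and the Halmos two-projection theorem applies to $(p, q)$ with $p \wedge q = 0$ (from $\|pq\| < 1$) and $p^\perp \wedge q^\perp = 0$ (from $p \vee q = 1$), placing them entirely in generic position where the two norms coincide.

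The main obstacle is the first step: identifying the correct element $b \in A_p^+$ and the correct product of projections so that \eqref{p1pneq} and \eqref{a+beq} combine into a clean contradiction. The two hints are tailored exactly to this argument, but the precise combinatorics of spectral parameters — balancing the sizes of the spectral cuts against $\epsilon$ and the length $n$ of the product — is delicate, and the promotion of SOT to norm convergence in Step~2 depends on making a similarly careful choice.
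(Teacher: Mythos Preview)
Your plan inverts the roles of the two estimates \eqref{a+beq} and \eqref{p1pneq}: in the paper they are used to prove $p\in A$ \emph{after} $\|pq\|<1$ is already known, not to prove $\|pq\|<1$ itself. More importantly, both of your steps have genuine gaps.

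\textbf{Step 1.} Your contradiction does not close. You take unit vectors $w_n\in qH$ with $\|p^\perp w_n\|\to 0$ and then assert that $w_n$ is forced into the range of $a_{(\epsilon,\infty)}$. But $a$ dominates $q^\perp$, not $q$: from $a=q^\perp+qaq$ you get $aw_n=qaq\,w_n$, and nothing prevents $qaq$ from being small (or even zero) on $w_n$. So there is no reason for $w_n$ to see $a_{(\epsilon,\infty)}$, and \eqref{a+beq} gives no contradiction. The paper instead argues via two-projection spectral theory: if $\|pq\|=1$, either $1$ is isolated in $\sigma(pqp)$, in which case $p\wedge q\neq 0$ is \emph{open} by \cite{Akemann1969} Theorem II.7, contradicting $(p\wedge q)^\circ=0$; or else $1$ is an accumulation point, and then $\sigma(pq)\setminus\{0,1\}=\sigma(p^\perp q^\perp)\setminus\{0,1\}$ forces $\|p^\perp q^\perp\|=1$, giving a state $\phi$ on $A''$ with $\phi(p^\perp)=\phi(q^\perp)=1$. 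Compactness of $q^\perp$ (via $q^\perp\leq a\in A^1_+$) makes $\phi$ restrict to a state on $A$, so $p,q\leq p_{\phi^\perp}<1$, contradicting $p\vee q=1$.

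\textbf{Step 2.} The inequality $pap\geq(1-c^2)p$ is correct, but it does not follow that $e_\lambda a e_\lambda\to pap$ in norm. A spectral gap for the \emph{limit} does not by itself promote strong convergence to norm convergence of the approximants, and \eqref{a+beq} says nothing about Cauchy behaviour of $(e_\lambda a e_\lambda)$; you have not explained what two positive elements you would sum, nor which spectral cut would control $\|e_\lambda a e_\lambda - e_\mu a e_\mu\|$. The paper's route is again by contradiction: assuming $p\notin A$, one shows (using \eqref{a+beq}, \eqref{p1pneq}, and crucially the modularity available because $\|pq\|<1$ makes $\mathcal{R}(r\vee q)=\mathcal{R}(r)+\mathcal{R}(q)$ for $r\leq p$) that the family $\big((q\vee b_{(\epsilon,\infty)})^\perp\big)_{b\in A_{p+},\,\epsilon>0}$ is norm centred. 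This produces a state on $A''$ annihilating both $A_p$ and $A_q$, and compactness of $q^\perp$ again pulls it down to $A$ for the same contradiction with $p\vee q=1$.

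Finally, the identity $\|pq\|=\|p^\perp q^\perp\|$ is obtained in the paper directly from $\sigma(pq)\setminus\{0,1\}=\sigma(p^\perp q^\perp)\setminus\{0,1\}$ once $\|pq\|<1$ (hence $p\wedge q=0$ in $\mathcal{P}(A'')$) and $p\vee q=1$ are known; you do not need to first prove $p\in A$ for this.
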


\begin{proof}
Assume, to the contrary, that $||pq||=1$.  As $\sigma(pq)=\sigma(pqp)$, by \cite{HladnikOmladic1988}, if $1>\sup(\sigma(pq)\setminus\{1\})$ then $0\neq p\wedge q\in\mathcal{P}(A'')^\circ$, by \cite{Akemann1969} Theorem II.7\footnote{Actually, there is slight oversight in the proof of \cite{Akemann1969} Theorem II.7.  Specifically, in paragraph 2 line 2, $q$ is replaced with $q_0=q-p\wedge q$, which is fine until line 4 from the bottom, where we must revert back to the original $q$.}, contradicting the assumption that $(p\wedge q)^\circ=0$.  But $\sigma(pq)\setminus\{0,1\}=\sigma(p^\perp q^\perp)\setminus\{0,1\}$, by \cite{Bice2012} \S2.2, so if $1=\sup(\sigma(pq)\setminus\{1\})=\sup(\sigma(p^\perp q^\perp)\setminus\{1\})$ then $||p^\perp q^\perp||=1$.  And if $||p^\perp q^\perp||=1$ then we have a state $\phi$ on $A''$ with $\phi(p^\perp)=1=\phi(q^\perp)$, by \cite{Bice2011} Theorem 2.2.  As $q^\perp$ is compact, $q^\perp\leq a$ and hence $\phi(a)=1$, for some $a\in A^1_+$.  Thus $\phi$ restricts to a state on $A$, so defining $\phi^\perp$ as in \eqref{phiperp} yields $p,q\leq p_{\phi^\perp}<1$ (note $p$ is open so $0<p=\bigvee A^1_{p+}$ and hence $\phi(a)=0$ for some $a\in A_+\setminus\{0\}$, i.e. $\phi$ is not faithful on $A$), contradicting the assumption that $p\vee q=1$.  Thus $||pq||=\sqrt{\sup(\sigma(pq))}=\sqrt{\sup(\sigma(p^\perp q^\perp))}=||p^\perp q^\perp||<1$.

Now suppose that $p\notin A$, so $b_{(\epsilon,\infty)}<p$, for all $b\in A_{p+}$ and $\epsilon>0$.  We claim that, moreover, $((q\vee b_{(\epsilon,\infty)})^\perp)_{b\in A_{p+},\epsilon>0}$ is norm centred (see \cite{Bice2011} Definition 2.1).  To see this, take $b_1,\ldots,b_n\in A_{p+}$ and $\epsilon_1,\ldots,\epsilon_n>0$.  Let $b=b_1+\ldots+b_n\in A_{p+}$ and, for any $\epsilon>0$, let $\delta=\min(\epsilon^3,\epsilon_1^3,\ldots,\epsilon_n^3)$.  By \eqref{a+beq}, \[||b_{[0,\delta]}(b_k)_{(\epsilon_k,\infty)}||\leq\epsilon,\] for all $k\leq n$.  Thus, by the inequality in line 4 of the proof \cite{Bice2012} Lemma 2.7 (where $P=q$, $Q=(b_k)_{(\epsilon_k,\infty)}$ and $R=b_{(\delta,\infty)}$), \[||(q\vee b_{(\delta,\infty)})^\perp(q\vee (b_k)_{(\epsilon_k,\infty)})||\leq\epsilon/\sqrt{1-||pq||^2}.\]  As $||pq||<1$, we have $\mathcal{R}(r\vee q)=\mathcal{R}(r)+\mathcal{R}(q)$, for any $r\leq p$, i.e. the supremum of these projections is just the supremum of the corresponding subspaces.  As we also know that subspaces of a vector space are modular, \[(b_{(\delta,\infty)}\vee q)\wedge p=b_{(\delta,\infty)}\vee(q\wedge p)=b_{(\delta,\infty)}<p\] so $b_{(\delta,\infty)}\vee q\neq1$.  Thus, we can take unit $v\in\mathcal{R}(q\vee b_{(\delta,\infty)})^\perp$ and \eqref{p1pneq} yields
\[||(q\vee(b_1)_{(\epsilon_1,\infty)})^\perp...(q\vee(b_n)_{(\epsilon_n,\infty)})^\perp||\geq 1-n\epsilon/\sqrt{1-||pq||^2}.\]
As $\epsilon$ was arbitrary, we in fact have $||(q\vee(b_1)_{(\epsilon_1,\infty)})^\perp...(q\vee(b_n)_{(\epsilon_n,\infty)})^\perp||=1$ which, as $b_1,\ldots,b_n$ and $\epsilon_1,\ldots,\epsilon_n$ were arbitrary, shows that $((q\vee b_{(\epsilon,\infty)})^\perp)_{b\in A_{p+},\epsilon>0}$ is indeed norm centred.  Thus we have a state $\phi$ on $A''$ with $\phi((q\vee b_{(\epsilon,\infty)})^\perp)=1$, for all $b\in A_{p+}$ and $\epsilon>0$, which means that $\phi[A_p]=\{0\}=\phi[A_q]$.  Again, as $q^\perp$ is compact, $\phi$ restricts to a state on $A$ so $p,q\leq p_{\phi^\perp}<1$, contradicting $p\vee q=1$.
\end{proof}

In particular, if $A$ is unital then any complementary $p,q\in\mathcal{P}(A'')^\circ$ must lie in $A$ and $\mathcal{P}(A)$ is a (first order) definable subset of the lattice $\mathcal{P}(A'')^\circ$ (the weaker statement that $\mathcal{P}(A)$ can be determined from the (non-first order) lattice structure of $\mathcal{P}(A'')^\circ$ follows already from \autoref{ucomp}).  One might conjecture that even when $A$ is non-unital, complementary $p,q\in\mathcal{P}(A'')^\circ$ must lie in $\mathcal{M}(A)$.  The following example shows this to be false.

\begin{xpl}\label{nonunital}
Take $P,Q\in\mathcal{P}(M_2)$ with $0<||PQ||<1$ and let $A$ be the C*-subalgebra of $C([0,1],M_2)$ of functions $f$ with $f(0)\in\mathbb{C}P$ and $f(1)\in\mathbb{C}Q$.  Identify $A''$ in the usual way with all bounded functions $f$ from $[0,1]$ to $M_2$ with $f(0)\in\mathbb{C}P$ and $f(1)\in\mathbb{C}Q$.  Define $p,q\in\mathcal{P}(A'')^\circ$ by $p(1)=0$, $p(x)=P$ otherwise, $q(0)=0$ and $q(x)=Q$ otherwise.  Then $p$ and $q$ are complementary (not just in $\mathcal{P}(A'')^\circ$ but even in $\mathcal{P}(A'')$) even though neither $p$ nor $q$ is in $\mathcal{M}(A)$.
\end{xpl}

\begin{qst}
Is there any algebraic characterization (or anything non-trivial that can be said) of complements in $\mathcal{H}(A)$ when $A$ is not unital?
\end{qst}

Strengthening the hypothesis in \autoref{pcomp} also yields uniqueness.

\begin{prp}\label{punicomp}
If $p\in\mathcal{P}(\mathcal{M}(A)\cap A')$, $p^\perp$ is the unique complement of $p$ in $\mathcal{P}(A'')^\circ$.
\end{prp}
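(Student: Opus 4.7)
The existence of $p^\perp$ as a complement is already delivered by \autoref{pcomp}, so the content of the proposition is uniqueness. My plan is to reduce everything to arithmetic of projections in $\mathcal{P}(A'')$ by exploiting centrality.

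Suppose $q\in\mathcal{P}(A'')^\circ$ is any complement of $p$, and set $B=A_q$, $I=A_p$; since $p\in\mathcal{P}(\mathcal{M}(A)\cap A')=\mathcal{P}(A''\cap A')^\circ$, the projection $p=p_I$ corresponds to a closed ideal $I\in\mathcal{I}(A)$ and is central in $A''$. In particular $p$ commutes with $q$, so $pq$ is an honest projection in $\mathcal{P}(A'')$ and agrees with the infimum of $p$ and $q$ in $\mathcal{P}(A'')$. The main technical point is to identify this product with the infimum computed in $\mathcal{P}(A'')^\circ$; equivalently, to show that $p_Ip_B$ is itself open.

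For this, consider the subalgebra $p_IB=Bp_I=I\cap B$, which is hereditary as the intersection of the ideal $I$ with the hereditary subalgebra $B$. If $(u_\lambda)$ is an approximate unit of $B$ then $(p_Iu_\lambda)$ is an approximate unit of $p_IB\in\mathcal{H}(A)$, and its supremum in $A''_+$ is $p_I(\bigvee u_\lambda)=p_Ip_B$. Thus $p_Ip_B=p_{I\cap B}\in\mathcal{P}(A'')^\circ$. Since any open projection below both $p$ and $q$ is automatically below $pq$, this shows $p\wedge q=pq$ whether the infimum is taken in $\mathcal{P}(A'')^\circ$ or in $\mathcal{P}(A'')$.

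Feeding this back into the hypothesis $p\wedge q=0$ gives $pq=0$, i.e. $q\leq p^\perp$. Because supremums in $\mathcal{P}(A'')^\circ$ coincide with those in $\mathcal{P}(A'')$ (recalled in the preliminaries), and because $p$ and $q$ are now orthogonal, $p\vee q=p+q$. The condition $p\vee q=1$ therefore forces $q=1-p=p^\perp$, proving uniqueness. The only nontrivial step is the middle paragraph's verification that $p_Ip_B$ is open; everything else is purely formal once centrality is exploited.
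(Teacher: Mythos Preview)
Your proof is correct and follows essentially the same idea as the paper's: exploit centrality of $p$ to get $p\wedge q=pq$ in $\mathcal{P}(A'')^\circ$, then read off $q=p^\perp$ from $pq=0$ and $p+q=1$. The paper simply asserts the general principle that $q\wedge r=qr$ in $\mathcal{P}(A'')^\circ$ whenever $qr=rq$ (and $q\vee r=q+r$ whenever $qr=0$), while you spell out a direct verification that $p_Ip_B$ is open via the approximate unit of $I\cap B$; this is the same step, just unpacked.

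One small correction: the displayed identification $\mathcal{P}(\mathcal{M}(A)\cap A')=\mathcal{P}(A''\cap A')^\circ$ is not an equality but only an inclusion $\subseteq$ (an open central projection need not be closed, e.g.\ in $C_0(\mathbb{R})$). This does not affect your argument, since you only use that $p$ is open and lies in $A'$, together with $p\in\mathcal{M}(A)$ to ensure $p_Iu_\lambda\in A$.
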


\begin{proof}
In $\mathcal{P}(A'')^\circ$, we have $q\wedge r=qr$, whenever $qr=rq$, and $q\vee r=q+r$, whenever $qr=0$.  Thus $p\wedge p^\perp=pp^\perp=0$ and $p\vee p^\perp=p+p^\perp=1$, i.e. $p^\perp$ is a complement of $p$.  Moreover, if $q$ is another complement of $p$ then $pq=p\wedge q=0$ so $p+q=p\vee q=1$, i.e. $q=p^\perp$.
\end{proof}

Again, we can prove the converse for compact projections.

\begin{cor}\label{unitunicomp}
If compact $p$ is a unique complement of $q\in\overline{\mathcal{P}(A'')}$ then $p\in A\cap A'$.
\end{cor}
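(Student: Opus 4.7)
The plan is to mimic Proposition~\ref{punicomp} via the $^\perp$-duality between $\mathcal{P}(A'')^\circ$ and $\overline{\mathcal{P}(A'')}$, invoke Theorem~\ref{unitcomp} to pin $p$ inside $A$, and then use a multiplier unitary $e^{ita}$ as a first-order perturbation to manufacture a second complement whenever $p$ fails to commute with $A$.

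First, the hypothesis transfers via $r\mapsto r^\perp$ to say that $p^\perp,q^\perp\in\mathcal{P}(A'')^\circ$ are complementary and that $p^\perp$ is their unique such complement of $q^\perp$. Since $(p^\perp)^\perp=p$ is compact, Theorem~\ref{unitcomp} yields $q^\perp\in A$. Consequently $q^\perp$ is clopen, and being orthogonal to $q$ with $q+q^\perp=1$, it is itself a complement of $q$ in $\overline{\mathcal{P}(A'')}$. Uniqueness then forces $p=q^\perp\in A$.

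For $p\in A'$, suppose instead $[p,a]\neq 0$ for some self-adjoint $a\in A$. For real $t$ near $0$, $u_t:=e^{ita}$ lies in $\mathcal{M}(A)$ and is unitary, so $r_t:=u_tpu_t^*\in\mathcal{P}(\mathcal{M}(A))\subseteq\overline{\mathcal{P}(A'')}$, and $r_t=p+it[a,p]+O(t^2)\neq p$ for small $t\neq 0$. As $pu_tp\to p$ in norm, the operator $pu_tp$ is invertible on $pH$ for such $t$; this invertibility immediately gives the vector space direct sum decomposition $\mathcal{R}(r_t)\oplus\mathcal{R}(q)=H$, hence $r_t\wedge q=0$ and $r_t\vee q=1$ already in $\mathcal{P}(A'')$. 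These identities persist in $\overline{\mathcal{P}(A'')}$ because meets there agree with meets in $\mathcal{P}(A'')$ (dually to the agreement of sups in $\mathcal{P}(A'')^\circ$ with those in $\mathcal{P}(A'')$), and the top value $1$ is already closed. So $r_t$ is a second complement of $q$ in $\overline{\mathcal{P}(A'')}$, contradicting uniqueness. The central difficulty is keeping the perturbed projection $r_t$ closed, which is precisely why the perturbing unitary is taken from $\mathcal{M}(A)$ rather than from $A''$: conjugation by a multiplier unitary preserves clopeness, whereas a generic unitary in $A''$ could move $p$ to a projection outside $\overline{\mathcal{P}(A'')}$.
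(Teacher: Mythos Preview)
Your argument is correct and follows essentially the same route as the paper's proof: dualize to $\mathcal{P}(A'')^\circ$, apply Theorem~\ref{unitcomp} to get $q^\perp\in A$, deduce $p=q^\perp\in A$ from uniqueness, then conjugate by a unitary of the form $e^{ita}$ to produce a second complement when $p\notin A'$. The only cosmetic difference is that the paper verifies $r_t$ is a complement of $q$ via the norm estimate $\|p-r_t\|<1$ (giving $\|qr_t\|=\|q^\perp r_t^\perp\|<1$ by two-projection spectral theory), whereas you argue directly through the invertibility of $pu_tp$ on $pH$ and the resulting vector-space decomposition; both devices yield the same conclusion.
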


\begin{proof}
By \autoref{unitcomp}, $q^\perp\in A$ so $q\in\mathcal{M}(A)$.  By \autoref{pcomp}, $q^\perp$ is a complement or $q$ which, if $p$ \emph{is} the unique complement of $q$, gives $p=q^\perp\in A$.  Assuming $p\notin A'$, we would have $a\in A^1_+$ with $ap\neq pa$.  As $||a||\leq1<\pi$, $a=f(e^{ia})$, where $f$ is a continuous function on $\mathbb{T}=\mathbb{C}^1\setminus\mathbb{C}^{1\circ}$, so $p$ does not commute with $e^{ia}$ either, i.e. $e^{ia}pe^{-ia}\neq p$.  Multiplying $a$ by some $\epsilon>0$ if necessary, we can also make $||1-e^{ia}||<\frac{1}{2}$ so $||p-e^{ia}pe^{-ia}||<1$, which means $||qe^{ia}pe^{-ia}||=||q^\perp(e^{ia}pe^{-ia})^\perp||<1$.  Thus $e^{ia}pe^{-ia}$ is a complement of $q$ in $\mathcal{P}(A'')$ and so certainly in $\overline{\mathcal{P}(A'')}$, i.e. $q$ has more than one complement, a contradiction (see also \cite{Ozawa2014}).
\end{proof}

So if $A$ is unital and $p$ is or has a unique complement in $\mathcal{P}(A'')^\circ$ then $p\in A$.  Unlike \autoref{unitcomp}, this might lead to a non-compact version of \autoref{unitunicomp}.

\begin{qst}
Is $p\in\mathcal{M}(A)\cap A'$ whenever $p$ is/has a unique complement in $\mathcal{P}(A'')^\circ$?
\end{qst}

\section{Pseudocomplements and Annihilator Ideals}\label{*AI}

\begin{dfn}[\cite{Birkhoff1967} Ch V \S8]
$p$ is a \emph{$\wedge$-pseudocomplement} of $q$ in $\mathbb{P}$ when
\[p=\bigvee\{r:q\wedge r=0\}\quad\textrm{and}\quad p\wedge q=0.\]
\end{dfn}

In other words, a $\wedge$-pseudcomplement is a maximum $\wedge$-semicomplement.  Every $O\in\mathscr{P}(X)^\circ$ has a $\wedge$-pseudocomplement given by $(X\setminus O)^\circ$.  So $\wedge$-pseudocomplements in $\mathscr{P}(X)^\circ$ are precisely the interiors of closed sets, i.e. the \emph{regular} open sets.  Also,
\[\mathscr{P}(C_0(X))^\perp=\mathscr{P}(C_0(X))^\triangledown=\{B_O:O\textrm{ is regular open}\},\]
for any locally compact $X$.  Thus, when $A$ is commutative, we have natural bijections between *-annihilators/annihilator ideals and $\wedge$-pseudocomplements in $\mathcal{H}(A)$.  A quick check of elementary non-commutative $A$ yields *-annihilators that are not $\wedge$-pseudocomplements, which makes it only reasonable to conjecture that $\wedge$-pseudocomplements in $\mathcal{H}(A)$ are precisely the annihilator ideals.  This is the central result we prove in this section.

\begin{prp}\label{halfa}
If $I\in\mathcal{H}(A)$ is an ideal, $I^\perp$ is a $\wedge$-pseudocomplement of $I$ and
\[I\textrm{ is a $\wedge$-pseudocomplement}\quad\Leftrightarrow\quad I=I^{\perp\perp}.\]
\end{prp}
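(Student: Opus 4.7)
The plan is to handle the two halves of the statement separately, with the first half providing the bookkeeping needed for the second.

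For the first half I want to show $I^\perp$ is a $\wedge$-pseudocomplement of $I$. Membership $I^\perp \in \mathcal{H}(A)$ follows from the general fact (cited in the preliminaries via Effros) that *-annihilators are hereditary C*-subalgebras. Disjointness $I \cap I^\perp = \{0\}$ is immediate: if $a$ is in both, then taking $b = a \in I$ in the definition of $I^\perp$ yields $aa^* = 0$, hence $a = 0$. For the maximality clause, suppose $B \in \mathcal{H}(A)$ satisfies $B \cap I = \{0\}$. Because $I$ is an ideal, $\overline{\mathrm{span}}(AIA) = I$, so condition \eqref{ideal} of \autoref{trieq} reads exactly $B \cap I = \{0\}$ and is therefore equivalent to $B \triangledown I$, i.e.\ $BAI = \{0\}$. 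In particular $BI = \{0\}$, and since $I = I^*$ we conclude $B \subseteq I^\perp$.

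For the second half, the direction ($\Leftarrow$) reduces to checking that $I^\perp$ is itself an ideal whenever $I$ is, and then applying the first half to $I^\perp$: that gives that $(I^\perp)^\perp = I^{\perp\perp}$ is a $\wedge$-pseudocomplement, so if $I = I^{\perp\perp}$ we are done. The check that $I^\perp$ is an ideal is a short direct computation: for $a \in I^\perp$, $c \in A$, $b \in I$, we have $b(ac) = (ba)c = 0$, and since $I$ is two-sided, $bc^* \in I$, whence $b(ac)^* = (bc^*)a^* = 0$; thus $ac \in I^\perp$, and $ca \in I^\perp$ analogously.

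For ($\Rightarrow$), suppose $I$ is the $\wedge$-pseudocomplement of some $J \in \mathcal{H}(A)$, so $I \cap J = \{0\}$ and $I$ is maximum among elements disjoint from $J$. Since $I$ is an ideal, the maximality argument of the first half (applied to $J$ in place of $B$) forces $J \subseteq I^\perp$. Applying the first half once more, this time to the ideal $I^\perp$, yields $I^{\perp\perp} \cap I^\perp = \{0\}$; combined with $J \subseteq I^\perp$ this gives $I^{\perp\perp} \cap J \subseteq I^{\perp\perp} \cap I^\perp = \{0\}$. Then maximality of $I$ among elements disjoint from $J$ forces $I^{\perp\perp} \subseteq I$, and since $I \subseteq I^{\perp\perp}$ holds generally, $I = I^{\perp\perp}$.

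The only real obstacle is the verification that $I^\perp$ is again a two-sided ideal when $I$ is one; everything else is bookkeeping with the maximality clause of $\wedge$-pseudocomplementation and the ideal-specific equivalence \eqref{ideal}$\Leftrightarrow$\eqref{BtriC} of \autoref{trieq}.
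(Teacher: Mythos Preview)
Your proof is correct and follows essentially the same route as the paper's: show that any $B\in\mathcal{H}(A)$ with $B\cap I=\{0\}$ satisfies $B\subseteq I^\perp$, note that $I^\perp$ is again an ideal so the same yields that $I^{\perp\perp}$ is a $\wedge$-pseudocomplement, and for the forward direction use $J\subseteq I^\perp$ together with maximality to squeeze $I^{\perp\perp}$ back down to $I$. The only cosmetic difference is that where you invoke \autoref{trieq} \eqref{ideal}$\Leftrightarrow$\eqref{BtriC} to get $B\subseteq I^\perp$, the paper argues directly: if $ab\neq 0$ for some $a\in I$, $b\in B$, then $0\neq b^*a^*ab\in B\cap I$ since $B$ is hereditary and $I$ is an ideal.
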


\begin{proof}
Say $B\in\mathcal{H}(A)$ and $B\cap I=\{0\}$.  If $B\nsubseteq I^\perp$, then there exists $b\in B$ and $a\in I$ such that $ab\neq0$ and hence $0\neq b^*a^*ab\in B\cap I$, as $B$ is hereditary and $I$ is an ideal, a contradiction.  As $B$ was arbitrary, $I^\perp$ is a $\wedge$-pseudocomplement of $I$.  In particular, as $I^\perp$ is also an ideal, $I^{\perp\perp}$ is a $\wedge$-pseudocomplement of $I^\perp$.  And if $I$ is a $\wedge$-pseudocomplement of some $C\in\mathcal{H}(A)$ then $C\subseteq I^\perp$.  For, if not, we could find $a\in I$ and $c\in C$ such that $ac\neq0$ and hence $0\neq c^*a^*ac\in I\cap C$, as $I$ is an ideal and $C$ is hereditary, contradicting $I\cap C=\{0\}$.  Thus we have $I\subseteq I^{\perp\perp}\subseteq C^\perp\subseteq I$, the last inclusion coming from the defining property of a $\wedge$-pseudocomplement (and the fact $C\cap C^\perp=\{0\}$), i.e. $I=I^{\perp\perp}$.
\end{proof}

Note that $B^\perp=B^\triangledown$ whenever $B$ is a (right) ideal, so the above result implies that every annihilator ideal is a $\wedge$-pseudocomplement, and we now set about proving the converse.  This requires the following lemma, which will come in handy again later on.  It gives a simple algebraic description of the restriction of a certain Sasaki projection (see \cite{Kalmbach1983} \S5 p99) on $\mathcal{H}(A_a)$.  Firstly, let $u=u_a\in A''$ denote the partial isometry coming from the polar decomposition of $a\in A$ (see \cite{Pedersen1979} Proposition 2.2.9), so $u^*u=(a^*a)_{\{0\}}^\perp$, $uu^*=(aa^*)_{\{0\}}^\perp$ and $a=u|a|=|a^*|u$.

\begin{lem}\label{Sasaki}
If $a\in A$ and $a^2=0$ then $u=u_a\in\mathcal{M}(A_a)$ and, for any $B\in\mathcal{H}(vA_av^*)$, where $v=v_a=\frac{1}{\sqrt{2}}(u+u^*u)$, we have
\begin{equation}\label{Sasakieq}
\overline{a^*Aa}\cap(B\vee\overline{aAa^*})=v^*Bv.
\end{equation}
\end{lem}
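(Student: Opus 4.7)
The plan is to establish $u \in \mathcal{M}(A_a)$ first, then exploit the induced partial isometry $v$ to reduce the main equation to a meet of open projections in $A''$.  Set $p = u^*u$ and $q = uu^*$; the hypothesis $a^2 = 0$ forces $u^2 = 0$ (equivalently $qp = 0$), so the open projection $p+q$ of $A_a$ splits into four orthogonal corners $A_p = \overline{a^*Aa}$, $A_q = \overline{aAa^*}$, $pAq \cap A$ and $qAp \cap A$.  The technical engine is that $u|a|^\alpha \in A$ for every $\alpha > 0$: for $\alpha \geq 1$ this is just $a|a|^{\alpha-1}$, while for $0<\alpha<1$ it follows by approximating $u|a|^\alpha$ in norm by $a\,g_n(|a|) \in A$, where the bounded continuous $g_n$ vanishes at $0$ and satisfies $t g_n(t) \to t^\alpha$ uniformly on $\sigma(|a|)$.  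Together with the identity $|a^*|^\beta u = u|a|^\beta$ (from $u|a|u^* = |a^*|$) and suitable Cohen-style factorizations in each corner (for instance, any $c \in pAq \cap A$ factors as $c = |c^*|^{1/2}(u_c|c|^{1/2})$ with both factors in $A$, and $u|c^*|^{1/2} \in A$ by approximating $|c^*|^{1/2} \in A_p$ with elements of $|a|^{1/2}A|a|^{1/2}$), one verifies corner by corner that $uA_a \subseteq A_a$, with $A_au \subseteq A_a$ following symmetrically.  Since $p = u^*u$ is itself in $\mathcal{M}(A_a)$, this gives $v \in \mathcal{M}(A_a)$.

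Direct computation using $u^2 = 0$, $pu = 0$ and $u^*p = 0$ then yields $v^*v = p$ and $f := vv^* = \frac{1}{2}(p+q+u+u^*)$, which is a projection.  Hence $c \mapsto vcv^*$ is a C*-isomorphism from the corner $pA_ap = \overline{a^*Aa}$ onto $fA_af = vA_av^*$, inverted by $d \mapsto v^*dv$.  In particular $vA_av^*$ is genuinely hereditary in $A_a$, so $\mathcal{H}(vA_av^*)$ is well defined, and for any $B \in \mathcal{H}(vA_av^*)$ with open projection $r \leq f$, the image $v^*Bv$ is hereditary in $\overline{a^*Aa}$ with open projection $v^*rv$.

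Transferring to $\mathcal{P}(A'')^\circ$, it then suffices to prove that the open projection of $\overline{a^*Aa} \cap (B \vee \overline{aAa^*})$ is $v^*rv$.  I would first show $r \vee q = v^*rv \vee q$ in $\mathcal{P}(A'')$ by a range-space argument: any $\xi \in \mathcal{R}(r) \subseteq \mathcal{R}(v)$ can be written as $\xi = vh_0$ with $h_0 = v^*\xi \in \mathcal{R}(v^*rv) \subseteq pH$, so $\xi = (h_0 + uh_0)/\sqrt{2}$ lies in $\mathcal{R}(v^*rv) + qH$; conversely, $vh_0 = r(vh_0)$ whenever $h_0 \in \mathcal{R}(v^*rv)$, giving the reverse inclusion.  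Since $pq = 0$, the sum $\mathcal{R}(v^*rv) + qH$ is orthogonal and intersects $pH$ only in $\mathcal{R}(v^*rv)$, so $p \wedge (r \vee q) = v^*rv$ in $\mathcal{P}(A'')$; because $v^*rv$ is itself open (the corner-iso image of the open $r$), this also computes the meet in $\mathcal{P}(A'')^\circ$, completing the proof.

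The main obstacle is the first step: verifying that $u$ actually sends the off-diagonal corners $pAq \cap A$ and $qAp \cap A$ back into $A$ (and hence into $A_a$), since these are not generated by $a$ and $a^*$ in any direct way.  Once $v$ is in hand as a genuine multiplier partial isometry, the remaining open-projection bookkeeping is essentially automatic.
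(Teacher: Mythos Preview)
Your argument is correct, but it diverges from the paper's in both halves, and the comparison is instructive.

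For $u\in\mathcal{M}(A_a)$, the paper avoids your corner-by-corner analysis entirely.  It simply observes that $(f_n(a^*a)+f_n(aa^*))$ is an approximate unit for $A_a$, that $u f_n(aa^*)=0$ (since $u=u\,u^*u$ and $u^*u\perp uu^*$), and that $u f_n(a^*a)\in A_a$ by Cuntz; then $ud=\lim u f_n(a^*a)d\in A_a$ for every $d\in A_a$, and similarly for $du$.  This is three lines and sidesteps exactly the off-diagonal corner you flag as the main obstacle.  Your approach works---once one notes that $p,q\in\mathcal{M}(A_a)$ (because each is both open and relatively closed under $p+q$), the four corners really do span $A_a$, and your factorization $c=|c^*|^{1/2}(u_c|c|^{1/2})$ with $u|c^*|^{1/2}\in A$ via $|a|^{1/2}A|a|^{1/2}$-approximation handles the hard $pA_aq$ corner---but it is considerably more laborious.

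For the Sasaki identity, the paper works purely algebraically in $\mathcal{P}(A'')$: from $\sqrt{2}(1-uu^*)vv^*=v^*$ it gets $v^*rv=2(1-uu^*)r(1-uu^*)\leq r\vee uu^*$, and from $r\,uu^*\,r=\tfrac{1}{2}r$ and $r\,v^*rv\,r=\tfrac{1}{2}r$ it gets $r\leq uu^*+v^*rv$, whence $r\vee uu^*=uu^*+v^*rv$ as an orthogonal sum; the meet with $u^*u$ is then immediate.  Your range-space argument reaches the same identity $r\vee q=v^*rv\vee q$ by decomposing vectors $\xi=v h_0=\tfrac{1}{\sqrt{2}}(h_0+uh_0)$.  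This is perfectly valid and arguably more transparent geometrically, though it leans on the Hilbert-space picture rather than staying inside the projection algebra.  Both routes need the final observation that $v^*rv$ is open, so the $\mathcal{P}(A'')$-meet is already the $\mathcal{P}(A'')^\circ$-meet.
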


\begin{proof}
For continuous $f$ on $\mathbb{R}_+$ with $f(0)=0$, we have $uf(a^*a),f(aa^*)u\in A_a$ (see \cite{Cuntz1977} Proposition 1.3).  For such $(f_n)\uparrow\chi_{(0,\infty)}$ (pointwise) and for any $d\in A_a$, we have $(f_n(a^*a)+f_n(aa^*))d\rightarrow d$ and $d(f_n(a^*a)+f_n(aa^*))\rightarrow d$ so \[ud=\lim uf_n(a^*a)d\in A_a\quad\textrm{and}\quad du=\lim df_n(a^*a)u\in A_a.\]  As $d\in A_a$ was arbitrary, $u\in\mathcal{M}(A_a)$.

Take $p\leq vv^*$.  As $\sqrt{2}(1-uu^*)vv^*=\frac{1}{\sqrt{2}}(u^*+u^*u)=v^*$, we have $\sqrt{2}(1-uu^*)p=v^*p$ and hence $v^*pv=2(1-uu^*)p(1-uu^*)\leq p\vee uu^*$ so $uu^*+v^*pv\leq p\vee uu^*$.  Also $v^*uu^*v=\frac{1}{2}u^*u=\frac{1}{2}v^*v$ and hence $vv^*uu^*vv^*=\frac{1}{2}vv^*$ so $puu^*p=\frac{1}{2}p$.  But also $v^2=\frac{1}{2}(u+u^*u)=\frac{1}{\sqrt{2}}v$ and hence $vv^*vvv^*=\frac{1}{\sqrt{2}}vv^*$ so $pvp=\frac{1}{\sqrt{2}}p$ and therefore $pv^*pvp=\frac{1}{2}p$.  Thus $p(uu^*+v^*pv)p=p$, i.e. $p\leq uu^*+v^*pv$.  We certainly also have $uu^*\leq uu^*+v^*pv$ and thus $p\vee uu^*=uu^*+v^*pv$.  Hence
\[u^*u\wedge(p\vee uu^*)=v^*pv.\]
If $p$ is also open then so is $v^*pv=\bigvee v^*A^1_{p_+}v$, thus verifying \eqref{Sasakieq}.
\end{proof}

Incidentally, it would be interesting to know if $u_a\in\mathcal{M}(A_a)$ even when $a^2\neq0$.  Also, an important special case of \autoref{Sasaki} occurs when $B=vA_av^*$, in which case \eqref{Sasakieq} becomes $\overline{a^*Aa}\cap(vA_av^*\vee\overline{aAa^*})=\overline{a^*Aa}$, i.e.
\begin{equation}\label{Sasakieq2}
\overline{a^*Aa}\subseteq vA_av^*\vee\overline{aAa^*}
\end{equation}
Note too that $vv^*=\frac{1}{2}(uu^*+u+u^*+u^*u)$ so, as $u_{a^*}=u_a^*$, we have $v_av_a^*=v_{a^*}v_{a^*}^*$.  Thus, replacing $a$ with $a^*$ in \eqref{Sasakieq} we get $\overline{a^*Aa}\cap(B\vee\overline{aAa^*})=v^*_{a^*}Bv_{a^*}$ and \eqref{Sasakieq2} becomes
\begin{equation}\label{Sasakieq3}
\overline{aAa^*}\subseteq vA_av^*\vee\overline{a^*Aa}
\end{equation}

\begin{thm}\label{pseudo}
If $B$ is a $\wedge$-pseudocomplement of $C$ in $\mathcal{H}(A)$ then $B=C^\perp=C^\triangledown$.
\end{thm}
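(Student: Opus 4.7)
The plan is to prove $C^\triangledown \subseteq C^\perp \subseteq B$ by elementary arguments, then the main inclusion $B \subseteq C^\triangledown$ via \autoref{Sasaki}; together these give $B = C^\perp = C^\triangledown$.

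For the easy containments, if $a \in C \cap C^\perp$ then $ca^* = 0$ for all $c \in C$, so setting $c = a$ yields $aa^* = 0$ and $a = 0$; hence $C^\perp \cap C = \{0\}$ and $C^\perp \subseteq B$ by the maximality of the $\wedge$-pseudocomplement. Similarly $C^\triangledown \cap C = \{0\}$, since $a$ in both forces $aAa = \{0\}$ and hence $(a^*a)^2 = a^*(aa^*a) = 0$, giving $a = 0$; so $C^\triangledown \subseteq B$. Being a closed two-sided ideal of $A$, $C^\triangledown$ is $\ast$-closed; so for $a \in C^\triangledown$ and an approximate unit $(e_\lambda) \subseteq C$, we get $ca = \lim c e_\lambda a = 0$ for every $c \in C$ (since $c e_\lambda a \in CAa = \{0\}$), and $ca^* = 0$ analogously, yielding $C^\triangledown \subseteq C^\perp$.

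For the main inclusion, by \autoref{trieq}\eqref{aa*} it suffices to show that $aa^* \in B$ and $a^*a \in C$ force $a = 0$. Supposing otherwise, $\overline{aAa^*} \subseteq B$ and $\overline{a^*Aa} \subseteq C$. The plan is to invoke \autoref{Sasaki}; first I would reduce to the case $a^2 = 0$ by replacing $a$ with its off-diagonal piece relative to the range projections of $aa^*$ and $a^*a$ in $A''$. Once $a^2 = 0$, the map $x \mapsto vxv^*$ is a $\ast$-isomorphism $\overline{a^*Aa} \cong vA_av^*$ and \eqref{Sasakieq2} gives $\overline{a^*Aa} \subseteq vA_av^* \vee \overline{aAa^*}$; if $vA_av^* \cap C = \{0\}$, the maximality of $B$ forces $vA_av^* \subseteq B$, hence $\overline{a^*Aa} \subseteq B \cap C = \{0\}$, contradicting $a \neq 0$.

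The main obstacle I expect is establishing $vA_av^* \cap C = \{0\}$. Any positive $d$ in this intersection generates, via the order-preserving $\ast$-isomorphism above together with the hereditariness of $C$, a nonzero hereditary C*-subalgebra of $vA_av^*$ entirely inside $C$; applying the full Sasaki identity \eqref{Sasakieq} to this subalgebra and exploiting $\overline{aAa^*} \subseteq B$ and $B \cap C = \{0\}$ should yield the needed contradiction. Combining the main inclusion with the easy containments gives $B = C^\triangledown = C^\perp$.
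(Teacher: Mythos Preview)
Your overall strategy matches the paper's: first the easy inclusions $C^\triangledown\subseteq C^\perp\subseteq B$, then $B\triangledown C$ by contradiction, reducing to $a^2=0$ and applying \autoref{Sasaki} with a case split on whether $vA_av^*\cap C$ is trivial. Two gaps, however, keep this from being a proof.

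The serious one is the reduction to $a^2=0$. ``Replacing $a$ with its off-diagonal piece relative to the range projections of $aa^*$ and $a^*a$ in $A''$'' does not work: those range projections $uu^*,u^*u$ lie only in $A''$, and cuts such as $a(1-uu^*)$ need not land back in $A$ (nor is it clear the result would still satisfy $aa^*\in B$, $a^*a\in C$). The paper's reduction is genuinely delicate: one builds $e=(1-f(|a|))\,u\,g(|a|)\in A$ from carefully chosen continuous $f,g$ with $fg=g$, so that $de=0$ and hence $e^2=0$, and then performs a \emph{further} case split --- either $\overline{eAe^*}\cap C\neq\{0\}$ (replace $a$ by $ce$) or it is $\{0\}$ (then the pseudocomplement hypothesis forces $\overline{eAe^*}\subseteq B$, and an inequality for $dd^*-(dd^*)^2$ yields a nonzero element of $B\cap C$). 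None of this is captured by your one-line gesture.

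The second gap is smaller but real. With your convention $aa^*\in B$, $a^*a\in C$, the identity \eqref{Sasakieq} gives $v^*Ev=\overline{a^*Aa}\cap(E\vee\overline{aAa^*})$, which places $v^*Ev$ inside $\overline{a^*Aa}\subseteq C$; but $E\vee\overline{aAa^*}$ mixes $E\subseteq C$ with $\overline{aAa^*}\subseteq B$, so you cannot conclude $v^*Ev\subseteq B$ and no contradiction follows. What you actually need is the companion identity obtained by swapping $a\leftrightarrow a^*$ (see the remark preceding \eqref{Sasakieq3}), namely $\overline{aAa^*}\cap(E\vee\overline{a^*Aa})=v_{a^*}^*Ev_{a^*}$: now both $E$ and $\overline{a^*Aa}$ lie in $C$ while $\overline{aAa^*}\subseteq B$, and $B\cap C=\{0\}$ gives the contradiction. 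Equivalently, set up the argument with $a^*a\in B$ and $aa^*\in C$ from the start, as the paper does, so that \eqref{Sasakieq} applies directly.
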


\begin{proof}
As $C\cap C^\perp=\{0\}$, we have $C^\triangledown\subseteq C^\perp\subseteq B$, by the definition of $\wedge$-pseudocomplement.  Thus it suffices to prove $B\subseteq C^\triangledown$ or, equivalently, $B\triangledown C$.

Assume to the contrary that there is some $a\in A\setminus\{0\}$ with $a^*a\in B$ and $aa^*\in C$.  We first claim that, by finding a suitable replacement if necessary, we can further assume that $a^2=0$.  To see this, take $\delta\in(0,||a||)$ and let $f$ and $g$ be continuous functions on $\mathbb{R}_+$ with $f(0)=0$, $f(r)=1$, for all $r\geq\delta$, $g(r)=0$, for all $r\leq\delta$, and $g(r)\in(0,1)$, for all $r>\delta$ (so $fg=g$).  For $u=u_a$, again by \cite{Cuntz1977} Proposition 1.3 we have $d=ug(|a|)=g(|a^*|)u\in A$.  Let $e=d-f(|a|)d$ and note that, as $g(|a|)f(|a|)=g(|a|)$, we have $df(|a|)=d$ so $de=0$ and hence $e^2=0$.  Thus if there exists $c\in\overline{eAe^*}\cap C\setminus\{0\}$ then we may replace $a$ with $ce$ and we are done.  Otherwise, $\overline{eAe^*}\cap C=\{0\}$ and hence $\overline{eAe^*}\subseteq B$, as $B$ is a $\wedge$-pseudocomplement of $C$.  Note that
\begin{eqnarray*}
ee^* &=& (1-f(|a|))dd^*(1-f(|a|))\\
&=& dd^*-f(|a|)dd^*-dd^*f(|a|)+f(|a|)dd^*f(|a|)\\
&\geq& dd^*-(dd^*)^2-f(|a|)^2+f(|a|)dd^*f(|a|),
\end{eqnarray*}
(we are using the fact that $x^2-xy-yx+y^2=(x-y)^2\geq0$, for any $x,y\in A_\mathrm{sa}$, where here $x=dd^*$ and $y=f(|a|)$) so, as $ee^*,a^*a\in B$,
\begin{equation}\label{dd*}
dd^*-(dd^*)^2\leq ee^*+f(|a|)^2-f(|a|)dd^*f(|a|)\in B.
\end{equation}
But $dd^*=g(|a^*|)^2$ so $||d||^2<1$ and hence $0<dd^*-(dd^*)^2\in C$, as $aa^*\in C$, contradicting $B\cap C=\{0\}$.  Thus the claim is proved.

So, further assuming that $a^2=0$, we may consider $D=vA_av^*\in\mathcal{H}(A)$ as in \autoref{Sasaki}.  If $C\cap D=\{0\}$ then, as $B$ is a $\wedge$-pseudocomplement of $C$, we have $D\subseteq B$.  But then, by \eqref{Sasakieq3},
\[\overline{aAa^*}\subseteq D\vee\overline{a^*Aa}\subseteq B,\] even though $aa^*\in C$, contradicting $B\cap C=\{0\}$.  But if $E=C\cap D\neq\{0\}$ then $E\vee\overline{aAa^*}\subseteq C$ even though
\[\{0\}\neq v^*Ev=\overline{a^*Aa}\cap(E\vee\overline{aAa^*})\subseteq B,\] again contradicting $B\cap C=\{0\}$.
\end{proof}

Even when $B$ has no $\wedge$-pseudocomplement $\mathcal{H}(A)$, we can still prove something about the supremum of all $\wedge$-semicomplements of $B$ (see \autoref{supsemi}) using the following result, which strengthens \cite{Pedersen1979} Lemma 2.6.3. 

Let $\mathcal{U}(S)$ denote the unitaries in $S\subseteq\mathcal{M}(A)$, and let $\mathrm{c}(a)$ denote the central cover of $a$ in $A''_\mathrm{sa}$, as in \cite{Pedersen1979} 2.6.2.  So if $p\in\mathcal{P}(A'')$ then $\mathrm{c}(p)$ is the smallest element of $\mathcal{P}(A''\cap A')$ with $p\leq\mathrm{c}(p)$ and, moreover, $\mathrm{c}(p)$ is open whenever $p$ is, in which case $\mathrm{c}(p)=\bigvee(\overline{\mathrm{span}}(AA_pA))^1_+$.

\begin{lem}\label{eplem}
For every $\epsilon>0$ and $p\in\mathcal{P}(A'')$, we have $\mathrm{c}(p)=\bigvee_{u\in\mathcal{U}(1+A^\epsilon)}upu^*$.
\end{lem}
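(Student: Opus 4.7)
The plan is to establish the two inequalities separately, relying on the standard characterization of the central cover as the projection onto $\overline{A\mathcal{R}(p)}$. (This is standard: $\overline{A\mathcal{R}(p)}$ is a closed $A$-invariant subspace containing $\mathcal{R}(p)$, so its projection $q'$ satisfies $q' \in A'' \cap A'$ and $q' \geq p$, giving $q' \geq \mathrm{c}(p)$; conversely $\mathrm{c}(p) \in A'$ makes $\mathcal{R}(\mathrm{c}(p))$ an $A$-invariant closed subspace containing $\mathcal{R}(p)$, hence $\mathcal{R}(\mathrm{c}(p)) \supseteq \overline{A\mathcal{R}(p)}$. Non-degeneracy of the reduced atomic representation is what ensures $\mathcal{R}(p) \subseteq \overline{AH}$.)

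Write $q = \bigvee_{u \in \mathcal{U}(1+A^\epsilon)} upu^*$. The inequality $q \leq \mathrm{c}(p)$ is immediate: for each $u$ in the indexing set, $u \in \mathcal{U}(A'')$ and $\mathrm{c}(p)$ is central, so $upu^* \leq u\mathrm{c}(p)u^* = \mathrm{c}(p)$.

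For the reverse inequality, I would use one-parameter unitary groups of the form $u_t = e^{ita}$ for $a \in A_{\mathrm{sa}}$. The norm estimate $\|u_t - 1\| = \bigl\|\sum_{n \geq 1} (ita)^n/n!\bigr\| \leq e^{|t|\|a\|} - 1$ shows that $u_t \in \mathcal{U}(1 + A^\epsilon)$ whenever $|t|\|a\| \leq \log(1+\epsilon)$. For such $t$ and any $v \in \mathcal{R}(p)$, the computation $u_t p u_t^*(u_t v) = u_t pv = u_t v$ places $u_t v$ in $\mathcal{R}(u_t p u_t^*) \subseteq \mathcal{R}(q)$. Since $v \in \mathcal{R}(p) \subseteq \mathcal{R}(q)$ as well, the vectors $(u_t v - v)/t$ all lie in the closed subspace $\mathcal{R}(q)$, and letting $t \to 0$ yields $iav \in \mathcal{R}(q)$, hence $av \in \mathcal{R}(q)$.

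Decomposing an arbitrary $a \in A$ into self-adjoint parts $a = c + id$ gives $av = cv + i(dv) \in \mathcal{R}(q)$ for all $v \in \mathcal{R}(p)$. Thus $A\mathcal{R}(p) \subseteq \mathcal{R}(q)$, so $\mathcal{R}(\mathrm{c}(p)) = \overline{A\mathcal{R}(p)} \subseteq \mathcal{R}(q)$, giving $\mathrm{c}(p) \leq q$, and completing the proof. The only delicate point is verifying the exponential estimate so that $u_t$ truly lands in $1 + A^\epsilon$ for $t > 0$ small; once that is in hand, the differentiation trick at $t=0$ makes the $\epsilon$-restriction cost nothing, since we only need arbitrarily small $t$ to recover all of $A_{\mathrm{sa}} \cdot \mathcal{R}(p)$.
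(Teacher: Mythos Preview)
Your proof is correct and takes a genuinely different route from the paper's argument.  The paper argues by contradiction: assuming $p_U=\bigvee_{u\in\mathcal{U}(1+A^\epsilon)}upu^*<\mathrm{c}(p)$, it finds an irreducible summand $\pi$ with $\pi(p)\neq0$ and $\pi(p_U)\neq1$, picks unit vectors $v\in\mathcal{R}(\pi(p))$ and $w\in\mathcal{R}(\pi(p_U))^\perp$, and invokes Kadison's transitivity theorem to produce $c\in A_{\mathrm{sa}}$ with $\pi(e^{itc})v\in\mathrm{span}(v,w)\setminus\mathbb{C}v$ for $t\in(0,1)$; then $e^{itc}pe^{-itc}\nleq p_U$ for all such $t$, contradicting the definition of $p_U$ once $t$ is small enough that $e^{itc}\in\mathcal{U}(1+A^\epsilon)$.

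Your argument is more direct and more elementary: you identify $\mathcal{R}(\mathrm{c}(p))$ concretely as $\overline{A\mathcal{R}(p)}$ (valid here because in the reduced atomic representation $A'\subseteq A''$, so the projection onto any $A$-reducing subspace lies in $A''\cap A'$), and then recover $a v\in\mathcal{R}(q)$ for every $a\in A_{\mathrm{sa}}$ by differentiating the curve $t\mapsto e^{ita}v$ at $t=0$.  This avoids Kadison transitivity entirely; the only price is the background fact $\mathcal{R}(\mathrm{c}(p))=\overline{A\mathcal{R}(p)}$, which you justify.  The paper's approach, by contrast, makes explicit the role of irreducibility of the summands and would generalize more readily to settings where the ``$\overline{A\mathcal{R}(p)}$'' description of the central cover is less transparent.
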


\begin{proof}
If $p_U=\bigvee_{u\in\mathcal{U}(1+A^\epsilon)}upu^*<\mathrm{c}(p)$ then we have an irreducible representation $\pi$ of $A$ such that $\pi(p)\neq0$ and $\pi(p_U)\neq 1$.  Therefore there exist unit vectors $v\in\mathcal{R}(\pi(p))\subseteq\mathcal{R}(\pi(p_U))$ and $w\in\mathcal{R}(\pi(p_U))^\perp$.  Let $b$ be a self-adjoint operator of norm $\pi/2$ on $\mathrm{span}(v,w)$ such that $e^{ib}(v)=w$ and $e^{ib}(w)=v$.  By Kadison's transitivity theorem we have $c\in A_\mathrm{sa}$ such that $\pi(c)$ agrees with $b$ on $\mathrm{span}(v,w)$ and hence, for all $t\in(0,1)$, $e^{itc}pe^{-itc}\nleq p_U$ (because $w\in\mathcal{R}(p\vee e^{itc}pe^{-itc})$ and $p\leq p_U$), contradicting the definition of $p_U$.
\end{proof}

\begin{prp}\label{supsemi}
For any $p\in\mathcal{P}(A'')^\circ$, we have $\mathrm{c}(p^{\perp\circ})\leq\bigvee\{q\in\mathcal{P}(A'')^\circ:p\wedge q=0\}$.
\end{prp}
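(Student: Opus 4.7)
The plan is to use \autoref{eplem} to decompose $\mathrm{c}(p^{\perp\circ})$ as a supremum of unitary conjugates $up^{\perp\circ}u^*$ with $u$ close to $1$, and verify that each such conjugate already belongs to $\{q\in\mathcal{P}(A'')^\circ:p\wedge q=0\}$. Taking the supremum then gives the inequality. In symbols: fix any $\epsilon\in(0,1)$ and apply \autoref{eplem} to $p^{\perp\circ}$ to get
\[\mathrm{c}(p^{\perp\circ})=\bigvee_{u\in\mathcal{U}(1+A^\epsilon)}up^{\perp\circ}u^*,\]
and argue that every $up^{\perp\circ}u^*$ appearing in this supremum is an open $\wedge$-semicomplement of $p$.

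First I would check that $up^{\perp\circ}u^*\in\mathcal{P}(A'')^\circ$. Since $u=1+a$ with $a\in A^\epsilon$, we have $u\in\mathcal{U}(\mathcal{M}(A))$, so conjugation by $u$ is a *-automorphism of $A$ that sends $\mathcal{H}(A)$ to $\mathcal{H}(A)$; correspondingly $u(\cdot)u^*$ preserves $\mathcal{P}(A'')^\circ$ via $p_B\mapsto p_{uBu^*}$. Hence $up^{\perp\circ}u^*$ is open.

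Next comes the key norm estimate. Since $p^{\perp\circ}\leq p^\perp$, we have $pp^{\perp\circ}=0=p^{\perp\circ}p$, so expanding $u=1+a$ collapses all but one term:
\[\|p\cdot up^{\perp\circ}u^*\|^2=\|pup^{\perp\circ}u^*p\|=\|pap^{\perp\circ}a^*p\|\leq\|a\|^2\leq\epsilon^2<1.\]
With $r=up^{\perp\circ}u^*$, the strict bound $\|pr\|<1$ forces $1\notin\sigma(pr)=\sigma(prp)$ (by \cite{HladnikOmladic1988}), and this in turn forces $p\wedge r=0$ already in $\mathcal{P}(A'')$: any unit $v\in\mathcal{R}(p)\cap\mathcal{R}(r)$ would be a $1$-eigenvector of $prp$. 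A fortiori $p\wedge r=0$ in $\mathcal{P}(A'')^\circ$, as in the spectral argument from the proof of \autoref{unitcomp} (cf.\ \cite{Akemann1969} Theorem II.7). Consequently each $up^{\perp\circ}u^*$ lies in $\{q\in\mathcal{P}(A'')^\circ:p\wedge q=0\}$, and taking the supremum over $u\in\mathcal{U}(1+A^\epsilon)$ yields $\mathrm{c}(p^{\perp\circ})\leq\bigvee\{q\in\mathcal{P}(A'')^\circ:p\wedge q=0\}$.

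The main subtlety is the distinction between the meet in $\mathcal{P}(A'')$ and the (potentially smaller) meet in $\mathcal{P}(A'')^\circ$; the strict norm bound $\|pr\|<1$ conveniently handles this by killing the meet already at the level of $\mathcal{P}(A'')$, which is a strictly stronger conclusion than required. Everything else is bookkeeping: checking that $\mathcal{U}(1+A^\epsilon)\subseteq\mathcal{U}(\mathcal{M}(A))$ so that conjugation preserves openness, and that the supremum formula from \autoref{eplem} applies to the arbitrary projection $p^{\perp\circ}$ (which it does, since that lemma is stated for all $p\in\mathcal{P}(A'')$).
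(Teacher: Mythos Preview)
Your proof is correct and follows essentially the same approach as the paper's own argument: apply \autoref{eplem} to $p^{\perp\circ}$, use $pp^{\perp\circ}=0$ to bound $\|p\,up^{\perp\circ}u^*\|<1$ for $u\in\mathcal{U}(1+A^\epsilon)$, conclude $p\wedge up^{\perp\circ}u^*=0$, and take the supremum. The paper's version is terser (it fixes $\epsilon=1/3$ and omits the explicit check that $up^{\perp\circ}u^*$ is open), but the structure is identical.
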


\begin{proof}
Note that $pp^{\perp\circ}=0$ so, for all $u\in\mathcal{U}(1+A^{1/3})$, $||pup^{\perp\circ}u^*||<1$, and hence $p\wedge up^{\perp\circ}u^*=0$.  Thus $\mathrm{c}(p^{\perp\circ})\leq r=\bigvee\{q\in\mathcal{P}(A'')^\circ:p\wedge q=0\}$, by \autoref{eplem}.
\end{proof}

To see that the inequality here can be strict, see \autoref{C01K} below.

This also gives us a simple way of proving a weakened form of \autoref{pseudo}.

\begin{cor}
If $p$ has a $\wedge$-pseudocomplement in $\mathscr{P}(A'')^\circ$ then $p^{\perp\circ}\in A'$.
\end{cor}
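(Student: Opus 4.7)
The plan is to reduce the problem to showing $p^{\perp\circ} = \mathrm{c}(p^{\perp\circ})$, since the passage after \autoref{commutant} already identifies $\mathcal{P}(A''\cap A')^\circ = \mathcal{P}(A'')^\circ \cap A'$, so an open projection lies in $A'$ exactly when it equals its central cover. The key input will be \autoref{supsemi}, which tells us that $\mathrm{c}(p^{\perp\circ})$ sits below every upper bound of the $\wedge$-semicomplements of $p$.

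First I would let $q\in\mathcal{P}(A'')^\circ$ be the given $\wedge$-pseudocomplement of $p$, so $p\wedge q=0$ and $q\geq r$ for every $r\in\mathcal{P}(A'')^\circ$ with $p\wedge r=0$. By \autoref{supsemi}, $\mathrm{c}(p^{\perp\circ})\leq q$, and therefore
\[
p\wedge\mathrm{c}(p^{\perp\circ})\leq p\wedge q=0
\]
in $\mathcal{P}(A'')^\circ$. Now $\mathrm{c}(p^{\perp\circ})$ lies in $A'$ and, as noted before \autoref{eplem}, is open. Because $p$ and $\mathrm{c}(p^{\perp\circ})$ commute, the meet computed in $\mathcal{P}(A'')^\circ$ coincides with the ordinary product $p\,\mathrm{c}(p^{\perp\circ})$ (as used in \autoref{punicomp}). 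Hence $p\,\mathrm{c}(p^{\perp\circ})=0$, i.e. $\mathrm{c}(p^{\perp\circ})\leq p^\perp$.

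Since $\mathrm{c}(p^{\perp\circ})$ is open, it must lie below the largest open projection below $p^\perp$, namely $p^{\perp\circ}$. Combined with the trivial inclusion $p^{\perp\circ}\leq\mathrm{c}(p^{\perp\circ})$, this gives $p^{\perp\circ}=\mathrm{c}(p^{\perp\circ})\in A''\cap A'$, as required.

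I do not foresee a serious obstacle here: the argument is essentially a bookkeeping combination of \autoref{supsemi} with the commutation/openness facts already established. The only subtle point worth double-checking is the commutativity of the meet operation in $\mathcal{P}(A'')^\circ$ with the product, which hinges on $\mathrm{c}(p^{\perp\circ})$ being both central and open; both of these are immediate from the openness of $p^{\perp\circ}$ and the standard properties of the central cover recalled just before \autoref{eplem}.
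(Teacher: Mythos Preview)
Your proof is correct and follows essentially the same route as the paper: both use \autoref{supsemi} to place $\mathrm{c}(p^{\perp\circ})$ below the $\wedge$-pseudocomplement $q$, then use commutativity to convert $p\wedge\mathrm{c}(p^{\perp\circ})=0$ into $p\,\mathrm{c}(p^{\perp\circ})=0$, and conclude $\mathrm{c}(p^{\perp\circ})=p^{\perp\circ}\in A'$. Your version simply spells out a few steps (openness of the central cover, passage to $p^{\perp\circ}$) that the paper compresses into one line.
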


\begin{proof}
By \autoref{supsemi}, $\mathrm{c}(p^{\perp\circ})\leq q$, where $q$ is the $\wedge$-pseudocomplement of $p$, so $p\mathrm{c}(p^{\perp\circ})=(p\wedge\mathrm{c}(p^{\perp\circ}))^\circ=0$ and hence $\mathrm{c}(p^{\perp\circ})=p^{\perp\circ}$, i.e. $p^{\perp\circ}\in A'$.
\end{proof}

\section{Distributivity and Ideals}\label{D&I}

\begin{dfn}\label{veedistdef}
In a lattice $\mathbb{P}$ we call $p\in\mathbb{P}$ \emph{$\vee$-distributive} if, for all $q,r\in\mathbb{P}$, \[p\wedge(q\vee r)=(p\wedge q)\vee(p\wedge r).\]
Likewise, we call $p\in\mathbb{P}$ \emph{$\bigvee$-distributive} if, for all $S\subseteq\mathbb{P}$, \[p\wedge(\bigvee S)=\bigvee_{s\in S}(p\wedge s).\]
\end{dfn}

Every $O\in\mathscr{P}(X)^\circ$ is $\bigvee$-distributive\footnote{Indeed, most classical point-set topology can be done, albeit with various subtle differences, in the context of \emph{frames/locales} which are, by definition, just complete (bounded) lattices in which every element is $\bigvee$-distributive (see \cite{PicadoPultr2012}).} and, more generally, it is well-known in quantale theory (see \cite{BorceuxBossche1986}) that ideals in $\mathcal{H}(A)$ are $\bigvee$-distributive (alternatively this can be proved using open projections, as in \autoref{Idist}).  Conversely it was shown in \cite{BorceuxRosickyBossche1989} Proposition 5 that, for liminal $A$, certain $\vee$-distributive elements are ideals, where it was also asked what the general $\vee$-distributive elements in (even non-liminary) $A$ look like.  It turns out that the obvious candidate works.

\begin{thm}\label{veedist}
If $B$ is $\vee$-distributive in $\mathcal{H}(A)$ then $B$ is an ideal.
\end{thm}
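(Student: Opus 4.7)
My plan is to argue the contrapositive: supposing $B\in\mathcal{H}(A)$ is hereditary but not an ideal, I will exhibit two hereditary subalgebras whose join witnesses the failure of $\vee$-distributivity at $B$. The key tool is the Sasaki relation from \autoref{Sasaki}, applied to an element $a\in A$ with $a^2=0$.

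The first task is to locate $a\in A$ with $a^2=0$, $a^*a\in B$, and $aa^*\notin B$. That $B$ is hereditary but not an ideal is equivalent to the closed left ideal $L=\{x\in A:x^*x\in B\}$ not being self-adjoint, which produces $a_0\in A$ with $a_0^*a_0\in B$ and $a_0a_0^*\notin B$. Then I would apply the polar-decomposition truncation from the proof of \autoref{pseudo}: letting $d=u_{a_0}g(|a_0|)$ and $e=d-f(|a_0|)d$ for $f,g$ as there (parametrised by $\delta>0$), one has $e^2=0$ and $e^*e\leq d^*d=g(|a_0|)^2\in A_{a_0^*a_0}\subseteq B$. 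For some $\delta>0$ we must moreover have $ee^*\notin B$: otherwise the inequality
\[dd^*-(dd^*)^2\leq ee^*+f(|a_0|)^2-f(|a_0|)dd^*f(|a_0|)\]
from the proof of \autoref{pseudo}, together with hereditariness of $B$, would put the spectral bump $g(|a_0^*|)^2(1-g(|a_0^*|)^2)$ in $B$ for every $\delta$; but its supporting open projections $(a_0a_0^*)_{(\delta^2,4\delta^2)}$ cover $u_{a_0}u_{a_0}^*$ as $\delta$ ranges over $(0,\infty)$, forcing $u_{a_0}u_{a_0}^*\leq p_B$ and hence $a_0a_0^*\in B$, contrary to choice. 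So I may replace $a_0$ by this $e$ and henceforth take $a$ with $a^2=0$, $a^*a\in B$, $aa^*\notin B$.

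With such an $a$ in hand, I would apply $\vee$-distributivity of $B$ to $C=A_{aa^*}=\overline{aAa^*}$ and $D=vA_av^*$, where $v=v_a=\frac{1}{\sqrt{2}}(u+u^*u)$ and $u=u_a$. The inclusion \eqref{Sasakieq2} gives $\overline{a^*Aa}\subseteq D\vee C$, and $a^*a\in B$ gives $\overline{a^*Aa}\subseteq B$, so
\[\overline{a^*Aa}\subseteq B\cap(D\vee C)=(B\cap D)\vee(B\cap C).\]
Passing to open projections and setting $s=p_{B\cap D}\leq vv^*$ and $t=p_{B\cap C}\leq uu^*$, this reads $u^*u\leq s\vee t\leq s\vee uu^*$, and \eqref{Sasakieq} then yields $u^*u=u^*u\wedge(s\vee uu^*)=v^*sv$. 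Since $v^*v=u^*u$ makes $v$ restrict to an isometry $\mathcal{R}(u^*u)\to\mathcal{R}(vv^*)$, conjugation by $v$ is an order isomorphism between the open projections below $vv^*$ and those below $u^*u$, so $v^*sv=u^*u$ forces $s=vv^*$, i.e.\ $vv^*\leq p_B$.

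The proof would then close with the range identity $\mathcal{R}(vv^*)+\mathcal{R}(u^*u)=\mathcal{R}(uu^*)+\mathcal{R}(u^*u)$ (using $vx=(ux+x)/\sqrt{2}$ for $x\in\mathcal{R}(u^*u)$), which gives $vv^*\vee u^*u=uu^*\vee u^*u$; combined with $u^*u,vv^*\leq p_B$ this yields $uu^*\leq p_B$, i.e.\ $aa^*\in B$ -- contradicting our choice of $a$. The main obstacle is the reduction step: unlike in \autoref{pseudo}, where the $\wedge$-pseudocomplement property was used to handle the case $ee^*\in B$, here I have to appeal to the spectral-bump sum above to rule out $ee^*\in B$ for every $\delta$. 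Everything after the reduction is a direct unpacking of \autoref{Sasaki}.
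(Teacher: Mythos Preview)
Your argument is correct, and the overall architecture---find $a$ with $a^*a\in B$, $aa^*\notin B$, reduce to $a^2=0$, then use \autoref{Sasaki} with $C=\overline{aAa^*}$ and $D=vA_av^*$---matches the paper. There is one harmless slip in the reduction: with the $g$ from \autoref{pseudo} (which satisfies $g(r)\in(0,1)$ for all $r>\delta$), the support of $g(|a_0^*|)^2(1-g(|a_0^*|)^2)$ is $(a_0a_0^*)_{(\delta^2,\infty)}$, not $(a_0a_0^*)_{(\delta^2,4\delta^2)}$; either way these projections cover $u_{a_0}u_{a_0}^*$ as $\delta\to 0$, so your covering argument goes through. (The paper instead fixes a single $g=\lambda g_n$ with $\|dd^*\|<1$ and observes that $dd^*-(dd^*)^2\in B$ already forces $dd^*\in B$, which is a bit quicker.)

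Where your proof genuinely diverges is after the reduction. The paper separates $(B\cap C)\vee(B\cap D)$ from $B\cap(C\vee D)$ analytically: it picks a state $\phi$ with $\phi(aa^*)>0$ and $\phi[B]=\{0\}$, transfers it to $\psi(\cdot)=\phi(u\cdot u^*)$, and checks that $\psi$ annihilates $(B\cap C)\vee(B\cap D)$ but not $a^*a$. You instead argue purely with projections: assuming distributivity, you get $u^*u\leq s\vee uu^*$ with $s=p_{B\cap D}\leq vv^*$, invoke \eqref{Sasakieq} to obtain $v^*sv=u^*u$, and use that $v$ implements an order isomorphism $[0,vv^*]\to[0,u^*u]$ to conclude $s=vv^*$, i.e.\ $D\subseteq B$; then the range identity (equivalently \eqref{Sasakieq3}) yields $C\subseteq B$, contradicting $aa^*\notin B$. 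Your route stays entirely inside the lattice and exploits the injectivity of the Sasaki map, avoiding any functional; the paper's state argument is shorter but brings in an external tool. Both are clean once the Sasaki machinery is in place.
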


\begin{proof}
Take $B\in\mathcal{H}(A)$ that is not an ideal, so we have $x\in A$ and $y\in B$ with $xy\notin B$.  As $y^*x^*xy\in B$, we must have $xyy^*x\notin B$ so, setting $a=xy$, we have $|a|^2=a^*a\in B$ but $|a^*|^2=aa^*\notin B$.  Define continuous functions $(g_n)$ on $\mathbb{R}_+$ which uniformly approach the identity and satisfy $g_n^{-1}\{0\}\supseteq[0,\delta_n)\neq\emptyset$, for all $n\in\mathbb{N}$.  Then $g_n(|a^*|)\rightarrow|a^*|\notin B$ which, as $B$ is closed, means $g_n(|a^*|)\notin B$, for some $n$.  Set $g=\lambda g_n$ for this $n$, where $\lambda\in(0,1/||a||)$, and define $f$, $d$ and $e$ as in the first paragraph of the proof of \autoref{pseudo}.  If we had $ee^*\in B$ then $g(|a^*|)^2=dd^*\in B$, by \eqref{dd*}, a contradiction.  Thus, by replacing $a$ with $e$ if necessary, we may further assume that $a^2=0$.

Now set $u=u_a$, $v=v_a$, $C=\overline{aAa^*}$ and $D=vA_av^*$.  As $aa^*\notin B$, we have $\phi\in A^*_+\setminus\{0\}$ with $\phi(aa^*)>0$ and $\phi[B]=\{0\}$.  Define $\psi\in A^*_+\setminus\{0\}$ by $\psi(z)=\phi(uzu^*)$.  Note $\phi$ extends uniquely to a normal state on $A''$ so $\psi$ is well defined (in fact, as $A$ is given the reduced atomic representation, there must be some $h\in H$ with $\phi=\phi_h$ and then we may set $\psi=\phi_{u^*h}$).  Note that $(1-u^*u)v=\frac{1}{\sqrt{2}}u=uv$ so $(1-u^*u)vzv(1-u^*u)=uvzv^*u^*$, for any $z\in A$, so $(1-u^*u)d(1-u^*u)=udu^*$, for any $d\in D$.  Thus if $d\in B\cap D$ then $udu^*\in B$ and hence $\psi(d)=\phi(udu^*)=0$, i.e. $\psi[B\cap D]=\{0\}$.  As $a^2=0$, $\psi[C]=\{0\}$ too and hence $\psi[(B\cap C)\vee(B\cap D)]=\{0\}$.  But $a^*a\in C\vee D$, by \eqref{Sasakieq2}, and $a^*a\in B$ too even though $\psi(a^*a)=\phi(ua^*au^*)=\phi(u|a||a|u^*)=\phi(aa^*)>0$ so \[(B\cap C)\vee(B\cap D)\neq B\cap(C\vee D).\]
\end{proof}

We can make $\mathcal{H}(A)$ a quantale (see \cite{Mulvey1986}) by defining $B\& C=B\cap\overline{\mathrm{span}}(ABA)$.  Moreover, this agrees with the usual quantale structure on the lattice $\mathcal{I}_R(A)$ of closed right ideals given by $I\& J=\overline{\mathrm{span}}(IJ)$, when we identify every $I\in\mathcal{I}_R(A)$ with $I\cap I^*\in\mathcal{H}(A)$ (see \cite{BorceuxBossche1986} Proposition 2).  By \autoref{veedist}, the ideals in $\mathcal{H}(A)$ can be identified purely from the order structure of $\mathcal{H}(A)$, so the lattice $\mathcal{H}(A)$ completely determines the quantale $\mathcal{H}(A)$.  As any postliminary $A$ is completely determined by the quantale $\mathcal{H}(A)$, by the theorem at the end of \cite{BorceuxRosickyBossche1989},\footnote{There seem to be some details missing in the proof of this theorem.  Specifically, we do not see how to get an isomorphism of q-spaces, i.e. an algebraic isomorphism of $\mathcal{B}(H)$ and $\mathcal{B}(H')$ identifying open projections, purely from a quantale isomorphism.  Hopefully this is just a lack of understanding on our part for, if not, it would put our stronger result \autoref{postlimlat} in some doubt.} we also get the following strengthening of \cite{BorceuxRosickyBossche1989} Proposition 5.

\begin{cor}\label{postlimlat}
Any postliminary $A$ is completely determined by the lattice $\mathcal{H}(A)$.
\end{cor}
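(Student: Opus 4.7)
The plan is to reduce the statement to the theorem of \cite{BorceuxRosickyBossche1989} asserting that postliminary $A$ is completely determined by the quantale $\mathcal{H}(A)$. It therefore suffices to show that the quantale structure on $\mathcal{H}(A)$ is already encoded in its lattice structure.

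First I would recall the quantale operation: $B\&C = B\cap\overline{\mathrm{span}}(ABA)$. The lattice operations $\cap=\wedge$ are of course already available from the lattice, so the only extra datum needed is the map $B\mapsto \overline{\mathrm{span}}(ABA)$, i.e. the operation assigning to each $B\in\mathcal{H}(A)$ the smallest closed ideal containing it. So the key step is to recover $\mathcal{I}(A)\subseteq\mathcal{H}(A)$ purely order-theoretically.

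Next, I would invoke \autoref{veedist}, which gives one direction: every $\vee$-distributive element of $\mathcal{H}(A)$ is an ideal. The converse is \autoref{Idist}, showing every ideal is $\vee$-distributive (in fact $\bigvee$-distributive, as noted from the quantale-theoretic perspective of \cite{BorceuxBossche1986}). Hence
\[
\mathcal{I}(A)=\{B\in\mathcal{H}(A):B\text{ is }\vee\text{-distributive in }\mathcal{H}(A)\},
\]
and the right-hand side is plainly definable from the lattice $\mathcal{H}(A)$ alone, by the purely order-theoretic \autoref{veedistdef}.

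Having identified $\mathcal{I}(A)$ order-theoretically, the ideal closure is then reconstructed as the lattice infimum
\[
\overline{\mathrm{span}}(ABA)=\bigwedge\{I\in\mathcal{I}(A):B\subseteq I\},
\]
which is again visible to the lattice. Consequently the quantale product $B\&C=C\wedge\bigwedge\{I\in\mathcal{I}(A):B\subseteq I\}$ is determined by the lattice $\mathcal{H}(A)$. Any lattice isomorphism $\mathcal{H}(A)\cong\mathcal{H}(A')$ therefore automatically intertwines the quantale products, hence is a quantale isomorphism, and then the Borceux--Rosický--van den Bossche theorem gives an isomorphism $A\cong A'$. The one point one has to trust, and which I would flag as the main caveat, is the soundness of that reconstruction theorem from \cite{BorceuxRosickyBossche1989} (see the authors' own footnote questioning certain details of its proof); granting it, the present corollary is an immediate consequence of \autoref{veedist}.
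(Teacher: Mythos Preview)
Your argument is correct and follows essentially the same route as the paper: use \autoref{veedist} (together with \autoref{Idist}) to recover $\mathcal{I}(A)$, hence the ideal cover and the quantale product, from the lattice alone, and then invoke the Borceux--Rosick\'y--van den Bossche reconstruction theorem, including the caveat the authors themselves raise about its proof. The only blemish is the formula for $B\&C$, which you copy with the paper's apparent typo and then silently correct; this does not affect the argument.
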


It would be interesting to know if this can be extended to some broader class of C*-algebras.  Another natural structure to consider on $\mathcal{H}(A)$ would be the Peligrad-Zsid\'{o} equivalence relation $\sim_\mathrm{PZ}$ given in \cite{PeligradZsido2000}, and this might help in distinguishing more C*-algebras.  However, this would still not distinguish between $A$ and $A^\mathrm{op}$, even though these can be non-isomorphic C*-algebras (see \cite{Phillips2004}).  But perhaps $(\mathcal{H}(A),\sim_\mathrm{PZ})$ could still be a complete isomorphism invariant within a large class of (Elliot invariant) classifiable C*-algebras? 

Dual to \autoref{veedistdef}, we also have \emph{$\wedge$-distributivity} and \emph{$\bigwedge$-distributivity}.  Moreover, \autoref{veedist} can also be proved with $\wedge$-distributivity in place of $\vee$-distributivity.

\begin{thm}\label{wedgedist}
If $B$ is $\wedge$-distributive in $\mathcal{H}(A)$ then $B$ is an ideal.
\end{thm}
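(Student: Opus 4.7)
The plan is to run a direct $\wedge$-dual of the argument for \autoref{veedist}, using the Sasaki inclusion \eqref{Sasakieq3} as the main tool in place of the state-construction from that proof.

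Suppose $B\in\mathcal{H}(A)$ is not an ideal. The reduction carried out in the first paragraph of the proof of \autoref{veedist} only uses that $a^*a\in B$ and $aa^*\notin B$, so it carries over unchanged and yields $a\in A$ with $a^2=0$, $a^*a\in B$ and $aa^*\notin B$. Set $u=u_a$, $v=v_a$, $C=\overline{aAa^*}$ and $D=vA_av^*$. Since $a^*a\in B$ and $B$ is hereditary, the open projection $p_B$ dominates the support projection $(a^*a)_{(0,\infty)}=u^*u$, so $\overline{a^*Aa}\subseteq B$.

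The witness to the failure of $\wedge$-distributivity will be $aa^*$. Trivially $aa^*\in C\subseteq B\vee C$; and \eqref{Sasakieq3} combined with $\overline{a^*Aa}\subseteq B$ gives
\[C=\overline{aAa^*}\subseteq vA_av^*\vee\overline{a^*Aa}\subseteq B\vee D,\]
so $aa^*\in B\vee D$ as well, hence $aa^*\in(B\vee C)\wedge(B\vee D)$. It remains to verify $C\wedge D=\{0\}$, for then $B\vee(C\wedge D)=B\not\ni aa^*$, contradicting $\wedge$-distributivity. For this, note $p_C=uu^*$ and $p_D=vv^*=\tfrac12(uu^*+u^*u+u+u^*)$; since $a^2=0$ forces $u^2=0$ and hence $u^*u\cdot uu^*=0$, a direct computation gives $uu^*vv^*uu^*=\tfrac12 uu^*$, so $\|p_Cp_D\|=1/\sqrt{2}<1$ and therefore $\mathcal{R}(p_C)\cap\mathcal{R}(p_D)=\{0\}$, forcing even the meet in $\mathcal{P}(A'')^\circ$ to vanish.

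The only delicate step is choosing the right $C$ and $D$; once $D=vA_av^*$ is recognized as the natural Sasaki bridge pushing $C$ into $B\vee D$ modulo the hereditary piece $\overline{a^*Aa}$ absorbed by $B$, everything collapses to the two-line computation above. The symmetry with \autoref{veedist} is worth noting: there $a^*a$ witnessed $B\cap(C\vee D)\neq(B\cap C)\vee(B\cap D)$ via a separating state, while here $aa^*$ witnesses $B\vee(C\wedge D)\neq(B\vee C)\wedge(B\vee D)$ directly via the Sasaki inclusion.
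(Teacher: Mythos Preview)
Your argument is correct and coincides with the paper's own proof: the paper likewise reduces (via the first paragraph of the proof of \autoref{veedist}) to $a^2=0$ with $a^*a\in B$, $aa^*\notin B$, sets $C=\overline{aAa^*}$ and $D=vA_av^*$, uses \eqref{Sasakieq3} to get $C\subseteq B\vee D$, and invokes $C\cap D=\{0\}$ to conclude $C\nsubseteq B=B\vee(C\wedge D)$. Your explicit verification that $\|p_Cp_D\|=1/\sqrt2$ (which the paper records elsewhere, in the proof of \autoref{Sasaki} and of \autoref{anncentre}) is a welcome addition of detail, but the strategy is identical.
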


\begin{proof}
If $B\in\mathcal{H}(A)$ is not an ideal then, as in the proof of \autoref{veedist}, we have $a\in A$ with $a^*a\in B$ but $aa^*\notin B$, and we may set $u=u_a$, $v=v_a$, $C=\overline{aAa^*}$ and $D=vA_av^*$.  Then $C\subseteq\overline{a^*Aa}\vee D\subseteq B\vee D$ and hence $C\subseteq (B\vee C)\wedge(B\vee D)$, even though $C\cap D=\{0\}$ and hence $C\nsubseteq B=B\vee(C\wedge D)$.
\end{proof}

But, unlike with $\vee$-distributivity, even ideals in $\mathcal{H}(A)$ can fail to be $\wedge$-distributive.

\begin{xpl}\label{wedgedistxpl}
Let $A=C(\mathbb{N}\cup\{\infty\},M_2)$ and identify $A''$ with all bounded functions from $\mathbb{N}\cup\{\infty\}$ to $M_2$.  Define $p\in\mathcal{P}(A'\cap A'')^\circ$ by $p(n)=n\ \mathrm{mod}\ 2$ and $p(\infty)=0$.  Also define $q,r\in\mathcal{P}(A'\cap A'')^\circ$ by $q(n)=P_{1/n}$ and $r(n)=P_{(-1)^n/n}$ (and $q(\infty)=P_0=r(\infty)$) where
\[P_\theta=\begin{bmatrix}\sin\theta \\ \cos\theta\end{bmatrix}\begin{bmatrix}\sin\theta & \cos\theta\end{bmatrix}=\begin{bmatrix} \sin^2\theta & \sin\theta\cos\theta \\ \sin\theta\cos\theta & \cos^2\theta \end{bmatrix}\in\mathcal{P}(M_2).\]
Then $(q\wedge r)(n)=(n\ \mathrm{mod}\ 2)P_{1/n}$, for all $n\in\mathbb{N}$, and hence $(q\wedge r)^\circ(\infty)=0$ so $(p\vee(q\wedge r)^\circ)(\infty)=0$.  But $(p\vee q)(n)=1-(n\ \mathrm{mod}\ 2)P_{1/n+\pi/2}=(p\vee r)(n)$, for all $n\in\mathbb{N}$, and $(p\vee q)(\infty)=P_0=(p\vee r)(\infty)$, so $p\vee q=((p\vee q)\wedge(p\vee r))^\circ=p\vee r$ and hence \[((p\vee q)\wedge(p\vee r))^\circ(\infty)=P_0\neq0=(p\vee(q\wedge r)^\circ)(\infty).\]
\end{xpl}

This is somewhat surprising, given that when \emph{every} $p\in\mathbb{P}$ is $\vee$-distributive, \emph{every} $p\in\mathbb{P}$ is also $\vee$-distributive (see e.g. \cite{Blyth2005} \S5.1).  In this case we simply call $\mathbb{P}$ \emph{distributive}.  The $B\in\mathcal{H}(A)$ for which $\mathcal{H}(B)$ is distributive can be characterized in several different ways, as we now show (see also \cite{PeligradZsido2000} Lemma 2.6).

We denote the closed ideals of $A$ by $\mathcal{I}(A)$ and call $B\in\mathcal{H}(A)$ \emph{ideal-finite} if $\overline{\mathrm{span}}(ACA)=\overline{\mathrm{span}}(ABA)$ implies $C=B$, for all $C\in\mathcal{H}(B)$.

\begin{cor}\label{HAcomm}
For any $B\in\mathcal{H}(A)$, the following are equivalent.
\begin{enumerate}
\item\label{HAcomm1} $B$ is commutative.
\item\label{HAcomm2} $\mathcal{H}(B)=\mathcal{I}(B)$.
\item\label{HAcomm3} $B$ is ideal-finite.
\item\label{HAcomm4} $\mathcal{H}(B)$ is distributive.
\end{enumerate}
\end{cor}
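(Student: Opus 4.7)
Strategy: show (1) is equivalent to each of (2), (3), (4) separately, leveraging the correspondences $\mathcal{H}(B) \cong \mathcal{P}(B'')^\circ$ and $\mathcal{I}(B) \cong \mathcal{P}(B''\cap B')^\circ$, together with $B'' = p_B A'' p_B$ (the hereditary subalgebra $B = A_{p_B}$ is weakly dense in its corner, via cutting an arbitrary $a \in A$ by an approximate unit of $B$ on both sides).

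For (1) $\Leftrightarrow$ (2): if $B$ is commutative then $B'' \subseteq B'$ by Kaplansky density, so every open projection in $B''$ automatically lies in $B''\cap B'$, making every hereditary subalgebra an ideal. Conversely, $\mathcal{H}(B) = \mathcal{I}(B)$ forces $\mathcal{P}(B'')^\circ \subseteq B'$; for $a \in B_+$ and $\lambda > 0$ the spectral projection $a_{(\lambda,\infty)}$ is the support of $(a-\lambda)_+ \in B$ and hence open, so it must commute with $B$, and the spectral theorem then gives $a \in B'$, so $B \subseteq B'$.

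For (1) $\Leftrightarrow$ (4): commutative $B \cong C_0(X)$ has $\mathcal{H}(B) \cong \mathscr{P}(X)^\circ$, which is distributive. Conversely, distributivity of $\mathcal{H}(B)$ makes every element $\vee$-distributive in $\mathcal{H}(B)$, so \autoref{veedist} applied with $B$ in place of $A$ shows every element of $\mathcal{H}(B)$ is an ideal of $B$, yielding (2).

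For (1) $\Leftrightarrow$ (3): $B$ is commutative iff $p_B$ is abelian in $A''$. If $p_B$ is abelian then $p_B A'' p_B = p_B (A''\cap A')$, so every open $p_C \leq p_B$ equals $p_B z$ for a central projection $z \leq \mathrm{c}(p_B)$; since $\mathrm{c}(zp) = z\,\mathrm{c}(p)$ for central $z$, this gives $\mathrm{c}(p_C) = z$ and hence $p_B \cdot \mathrm{c}(p_C) = p_B z = p_C$, translating to $B \cap \overline{\mathrm{span}}(ACA) = C$ in $\mathcal{H}(A)$, so $\overline{\mathrm{span}}(ACA) = \overline{\mathrm{span}}(ABA)$ forces $C = B$. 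Conversely, if $p_B$ is non-abelian then some $\pi \in \hat{A}$ has $p_B^\pi$ of rank $\geq 2$ and $\pi(B) \subseteq \mathcal{B}(p_B^\pi H_\pi)$ non-commutative; I would select a self-adjoint $a \in B_+$ strictly positive on $B$ with non-scalar $\pi$-image, and choose $\lambda > 0$ making $a_{(\lambda,\infty)}$ strictly below $p_B$ yet of the same central cover, producing a $C \subsetneq B$ in $\mathcal{H}(B)$ with $\overline{\mathrm{span}}(ACA) = \overline{\mathrm{span}}(ABA)$ that violates ideal-finiteness. The delicate step will be the uniform choice of $\lambda$ below $\|\pi'(a)\|$ for all $\pi'$ with $p_B^{\pi'} \neq 0$, which is immediate when only finitely many irreducible representations are in play but in general requires one to assemble $p_C$ from several spectral projections or exploit a $\sigma$-unital-type element of $B$ to handle arbitrarily small eigenvalue distributions across $\hat A$.
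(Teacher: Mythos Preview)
Your arguments for (1)$\Leftrightarrow$(2) and (1)$\Leftrightarrow$(4) are correct, though you take a slightly more analytic route than the paper (Kaplansky density and Gelfand duality versus the paper's two-line algebraic manipulations $BC=CB\subseteq C$ and $C\wedge(D\vee E)=C\overline{D+E}=\overline{CD+CE}$). Your (1)$\Rightarrow$(3), recasting ideal-finiteness as $p_B\,\mathrm{c}(p_C)=p_C$ via the abelian-corner identity $p_BA''p_B=p_B(A''\cap A')$, is also fine and essentially matches the paper's $C=B\cap\overline{\mathrm{span}}(ACA)$.

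The gap is in $\neg$(1)$\Rightarrow\neg$(3). Your plan requires a strictly positive $a\in B_+$, which need not exist when $B$ is not $\sigma$-unital, and then a single $\lambda>0$ with $a_{(\lambda,\infty)}<p_B$ yet $\mathrm{c}(a_{(\lambda,\infty)})=\mathrm{c}(p_B)$. You correctly flag this as delicate: one needs $\lambda<\|\pi'(a)\|$ for every $\pi'$ not annihilating $B$, and these norms can accumulate at $0$. Patching this with ``several spectral projections'' is not obviously a finite procedure, and the vague appeal to a $\sigma$-unital-type element does not close the gap.

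The paper bypasses all of this with a one-line construction: pick $\pi\in\hat{A}$ with $\mathrm{rank}(\pi(p_B))>1$, choose any nonzero $v\in\mathcal{R}(\pi(p_B))$, and set
\[
C=\{b\in B:\pi(b)v=0=\pi(b^*)v\}.
\]
This $C$ is a proper hereditary subalgebra of $B$ (some approximate-unit element of $B$ moves $v$), and since $\pi(p_C)=\pi(p_B)-[v]$ is still nonzero while $\pi'(p_C)=\pi'(p_B)$ for all other $\pi'$, one has $\mathrm{c}(p_C)=\mathrm{c}(p_B)$, i.e.\ $\overline{\mathrm{span}}(ACA)=\overline{\mathrm{span}}(ABA)$. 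No spectral calculus, no uniformity over $\hat{A}$, no $\sigma$-unitality assumption.
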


\begin{proof}\
\begin{itemize}
\item[\eqref{HAcomm1}$\Rightarrow$\eqref{HAcomm2}] If $B$ is commutative and $C\in\mathcal{H}(B)$ then $BC=CB=BC\cap CB\subseteq C$.
\item[\eqref{HAcomm2}$\Rightarrow$\eqref{HAcomm3}] If $C\in\mathcal{I}(B)$ then $C=B\cap\overline{\mathrm{span}}(ACA)$, so if $\overline{\mathrm{span}}(ACA)=\overline{\mathrm{span}}(ABA)$ then $C=B\cap\overline{\mathrm{span}}(ABA)=B$.
\item[\eqref{HAcomm2}$\Rightarrow$\eqref{HAcomm4}] $C,D,E\in\mathcal{I}(B)\Rightarrow C\wedge(D\vee E)=C\overline{D+E}=\overline{CD+CE}=(C\wedge D)\vee(C\wedge E)$.
\item[\eqref{HAcomm4}$\Rightarrow$\eqref{HAcomm2}] See \autoref{veedist}.
\item[\eqref{HAcomm3}$\Rightarrow$\eqref{HAcomm1}] If $B$ is not commutative then $\mathrm{rank}(\pi(p_B))>1$, for some $\pi\in\hat{A}$.  Taking $v\in\mathcal{R}(\pi(p_B))\setminus\{0\}$, we have $C=\{b\in B:\pi(b)v=0=\pi(b^*)v\}\subsetneqq B$, even though $\overline{\mathrm{span}}(ACA)=\overline{\mathrm{span}}(ABA)$.
\end{itemize}
\end{proof}

\begin{qst}
Are the $\wedge$-distributive elements of $\mathcal{H}(A)$ precisely those of the form $I\oplus pAp$ where $I\in\mathcal{I}(A)$ is commutative and $p\in\mathcal{P}(\mathcal{M}(A))\cap A'$?
\end{qst}

While we do not know the answer to this question, or even if there is any algebraic characterization of $\wedge$-distributivity in $\mathcal{H}(A)$, we can obtain a number of characterizations of $\bigwedge$-distributivity.  First, let us introduce the order theoretic notion of centrality.

Note that, for any $p$ and $q$ in a poset $\mathbb{P}$, $[p,q]$ denotes the interval they define, i.e. $[p,q]=\{r\in\mathbb{P}:p\leq r\leq q\}$.

\begin{dfn}
We call $p\in\mathbb{P}$ \emph{central} if $\mathbb{P}\cong[0,p]\times[0,p^\perp]$, for some $p^\perp\in\mathbb{P}$, via
\[(q,r)\mapsto q\vee r\quad\textrm{and}\quad s\mapsto(s\wedge p,s\wedge p^\perp).\]
\end{dfn}

As with complements, the central elements of $\mathscr{P}(X)^\circ$ are precisely the clopen sets.  However, just like with $\wedge$-pseudocomplements, central elements in $\mathcal{H}(A)$ must be ideals.  In fact, for $p\in\mathcal{P}(A'')^\circ$, the algebraic notion of central, in $\mathcal{M}(A)$, coincides with the order theoretic notion just defined, as well as to the dual of several other order theoretic notions examined so far.

\begin{thm}\label{ce}
For any $p$ in $\mathcal{P}(A'')^\circ$, the following are equivalent.
\begin{enumerate}
\item\label{p+pperp} $A=A_p\oplus A_{p^\perp}$.
\item\label{MAcapA'} $p\in\mathcal{M}(A)\cap A'$.
\item\label{pcen} $p$ is central.
\item\label{bigmdist} $p$ is $\bigwedge$-distributive.
\item\label{jdcomp} $p$ is/has a $\wedge$-distributive complement.
\item\label{jpseudo} $p$ is/has a $\vee$-pseudocomplement.
\end{enumerate}
\end{thm}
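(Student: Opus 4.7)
The plan is to prove the equivalences in a star pattern around (2), with each of (1), (3)--(6) shown equivalent to the concrete algebraic condition $p \in \mathcal{M}(A) \cap A'$. First, (1)$\Leftrightarrow$(2) is direct: centrality of $p \in \mathcal{M}(A)$ lets us write any $a \in A$ as $a = pa + p^\perp a$ with $pa = pap \in A_p$ and $p^\perp a \in A_{p^\perp}$; conversely, an algebraic decomposition $A = A_p \oplus A_{p^\perp}$ defines $p$ as the projection onto the first summand, which multiplies $A$ into itself and commutes with every element of $A$.

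For (2)$\Rightarrow$(3), centrality of $p$ in $A''$ forces $p$ to commute with every open projection $p_B$, and a short check using heredity shows $p B^1_+ \subseteq (B \cap A_p)^1_+$ with $\sup p B^1_+ = p p_B$, so $B \mapsto (B \cap A_p, B \cap A_{p^\perp})$ is the claimed lattice isomorphism $\mathcal{H}(A) \cong \mathcal{H}(A_p) \times \mathcal{H}(A_{p^\perp})$. The implications (3)$\Rightarrow$(4), (3)$\Rightarrow$(5), (3)$\Rightarrow$(6) are then coordinate-wise verifications in $[0,p] \times [0,p^\perp]$: the element corresponding to $p$ is $\bigwedge$-distributive, its complement is $\wedge$-distributive, and this complement is the smallest $r$ with $p \vee r = 1$, hence a $\vee$-pseudocomplement.

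For (4)$\Rightarrow$(6), $\bigwedge$-distributivity of $p$ directly gives $p \vee \bigwedge\{r : p \vee r = 1\} = \bigwedge\{p \vee r : p \vee r = 1\} = 1$, so this infimum is itself the sought $\vee$-pseudocomplement. For (5)$\Rightarrow$(2), if $p$ has a $\wedge$-distributive complement $q$ then \autoref{wedgedist} promotes $q$ to an ideal, putting $q \in \mathcal{P}(A'' \cap A')^\circ$; centrality of $q$ makes $pq$ open, so $pq = p \wedge q = 0$ gives $p \leq q^\perp$, and centrality of $q^\perp$ distributes to $q^\perp = q^\perp \wedge (p \vee q) = p$, whence $p = q^\perp$ is both open (by hypothesis) and closed (as $q$ is open), yielding (2). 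The dual case in which $p$ itself is $\wedge$-distributive and admits a complement is handled symmetrically.

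The main obstacle is (6)$\Rightarrow$(2). Given a $\vee$-pseudocomplement $q$ of $p$, the strategy is to exploit its minimality among $\vee$-semicomplements by generating a large family of such via unitary conjugation. For any $u = 1 + a \in \mathcal{U}(1 + A^\epsilon)$ with $\epsilon < 1$, the operator $\mathrm{id} + pap$ is invertible on $\mathcal{R}(p)$ by a Neumann series, which forces $\mathcal{R}(p^\perp) \cap u\mathcal{R}(p) = \{0\}$ and hence density of $\mathcal{R}(p) + u\mathcal{R}(p^\perp)$ in $H$, i.e., $p \vee u p^\perp u^* = 1$. Minimality of $q$ then yields $q \leq \bigwedge_u u p^\perp u^* = \mathrm{c}(p)^\perp$ via \autoref{eplem} applied to $p$. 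Combined with $p \vee q = 1$ and the orthogonality $p \cdot \mathrm{c}(p)^\perp = 0$, this forces $p \vee \mathrm{c}(p)^\perp = 1$ and hence $p = \mathrm{c}(p) \in A'$, while $q = p^\perp$ then makes $p^\perp$ open, placing $p \in \mathcal{M}(A) \cap A'$.
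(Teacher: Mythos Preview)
Your argument for \eqref{jpseudo}$\Rightarrow$\eqref{MAcapA'} has a genuine gap.  You correctly show that $p\vee up^\perp u^*=1$ in $\mathcal{P}(A'')$ for every $u\in\mathcal{U}(1+A^\epsilon)$, but then invoke minimality of the $\vee$-pseudocomplement $q$ to conclude $q\leq up^\perp u^*$.  The trouble is that $q$ is the minimum $\vee$-semicomplement \emph{in $\mathcal{P}(A'')^\circ$}, so minimality only gives $q\leq r$ for \emph{open} $r$ with $p\vee r=1$.  Since $p$ is open, $p^\perp$ is closed, and conjugation by $u\in\mathcal{M}(A)$ preserves closed projections; thus $up^\perp u^*$ is closed but need not be open.  (Indeed, $p^\perp$ being open is exactly the conclusion $p\in\mathcal{M}(A)$ you are aiming for.)  So the step ``minimality of $q$ then yields $q\leq\bigwedge_u up^\perp u^*$'' does not follow.

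The paper bypasses this entirely: the statement that $q$ lies below every $\vee$-semicomplement of $p$ in $\mathcal{H}(A)$ is precisely condition \eqref{vsemi} of \autoref{trieq} (with $B=A_q$, $C=A_p$), and the implication \eqref{vsemi}$\Rightarrow$\eqref{dsum} there gives $A=A_p\vee A_q=A_p\oplus A_q$ directly.  Since $A_q\triangledown A_p$ forces $A_q\subseteq A_{p^\perp}$, this yields \eqref{p+pperp}.  Your unitary-conjugation idea can in fact be repaired, but only \emph{after} one knows $A_q\triangledown A_p$: that gives $\mathrm{c}(q)\leq p^\perp$ with $\mathrm{c}(q)$ central, and then for any $s\in\mathcal{P}(A'')$ with $p\vee s=1$ one gets $\mathrm{c}(q)s^\perp=\mathrm{c}(q)\wedge s^\perp\leq p^\perp\wedge s^\perp=0$, hence $q\leq\mathrm{c}(q)\leq s$.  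But this is precisely the content supplied by \autoref{trieq}, so the detour through \autoref{eplem} adds nothing.

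The remaining implications in your proposal are correct; in particular your direct argument for \eqref{jdcomp}$\Rightarrow$\eqref{MAcapA'} via $p=q^\perp$ is fine (and a bit more explicit than the paper, which again routes through \autoref{trieq}).
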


\begin{proof}
We immediately see that \eqref{pcen}$\ \Rightarrow\ p$ is a $\vee$-pseudocomplement and
\[\eqref{p+pperp}\ \Rightarrow\ \eqref{MAcapA'}\ \Rightarrow\ \eqref{pcen}\ \Rightarrow\ \eqref{bigmdist}\ \Rightarrow\ p\textrm{ has a $\vee$-pseudocomplement}.\]
Also \autoref{trieq} \eqref{vsemi} $\Rightarrow$ \eqref{dsum} yields \eqref{jpseudo} $\Rightarrow$ \eqref{p+pperp} here.  Another immediate implication is \eqref{pcen} $\Rightarrow$ \eqref{jdcomp}, while \autoref{wedgedist} and \autoref{trieq} \eqref{ideal} $\Rightarrow$ \eqref{dsum} yields \eqref{jdcomp} $\Rightarrow$ \eqref{p+pperp} here.
\end{proof}

Before moving on, let us point out that the centre of $\mathcal{H}(A)$ may not be complete, even though $\mathcal{H}(A)$ itself is a complete lattice.  Indeed, the $p$ in \autoref{wedgedistxpl} is not in $A$, even though it is a (countable) supremum and infimum (taken in $\mathscr{P}(A'')^\circ$) of projections in $A\cap A'$.  Whether this could happen in any complete lattice was mentioned as a question of S. Holland in \cite{Birkhoff1967} Ch 5 Problem 34, with examples and related theory given in \cite{Jakubik1973} and \cite{Janowitz1978}.  \autoref{wedgedistxpl} shows that this situation crops up quite naturally in $\mathcal{H}(A)$, even for quite elementary C*-algebras $A$.

\section{Separativity and *-Annihilators}\label{S&*A}

Dual to subfitness, we have separativity, i.e. $p\in\mathbb{P}$ is \emph{separative} if $p$ is $\wedge$-separated from every $q\nleq p$.  We also call $\mathbb{P}$ separative (see \cite{Kunen1980}) when every $p\in\mathbb{P}$ is separative, although some authors would call such $\mathbb{P}$ SSC.  Instead, we work with the following slightly weaker definition of SSC (which agrees with the original definition in \cite{MaedaMaeda1970})

\begin{dfn}\label{SSCSep}
We call $p$ \emph{section $\wedge$-semicomplemented} or \emph{SSC} if $p$ is $\wedge$-separated from every $q>p$.  We call $\mathbb{P}$ itself SSC when every $p\in\mathbb{P}$ is SSC.
\end{dfn}

Yet again, let us consider the lattice of open sets $\mathscr{P}(X)^\circ$ of a topological space $X$.  If $O\in\mathscr{P}(X)^\circ$ is regular then, for any $N\in\mathscr{P}(X)^\circ$ with $N\nsubseteq O$, we must also have $N\nsubseteq\overline{O}$ (because $N\subseteq\overline{O}$ would imply $N=N^\circ\subseteq\overline{O}^\circ=O$) and hence $N\setminus\overline{O}$ is a non-empty $\wedge$-semicomplement of $O$ in $\mathscr{P}(X)^\circ$.  While if $O$ is not regular then $O\subsetneqq\overline{O}^\circ$, even though the definition of closure means there is no non-empty open $N\subseteq\overline{O}$ with $O\cap N=\emptyset$.  So, as with $\wedge$-pseudocomplements, the SSC/separative elements of $\mathscr{P}(X)^\circ$ are precisely the regular open subsets.  However, unlike $\wedge$-pseudocomplements, SSC/separative elements of $\mathcal{H}(A)$ need not be ideals, which naturally leads to the following question \textendash\, are the SSC/separative elements of $\mathcal{H}(A)$ precisely the *-annihilators?

This time, the answer is no in general (see \autoref{C01K} below).  However, we can prove one direction, namely that *-annihilators in $\mathcal{H}(A)$ are necessarily separative, and in fact satisfy a strong version of being SSC too.  For this, we first require a number of slightly technical spectral projection inequalities.\footnote{Many of the results that follow first appeared in the preprint \cite{Bice2013}.}

\begin{lem}\label{c1-e}
For $\epsilon,\lambda>0$, there exists $\delta>0$ such that, whenever $b,c\in\mathcal{B}(H)^1_+$, $c\leq q\in\mathcal{P}(\mathcal{B}(H))$ and $||bq||^2\leq\lambda+\delta$, we have
\begin{equation}\label{c1-eeq}
||c_{[0,1-\epsilon]}(cb^2c)_{[\lambda-\delta,1]}||\leq\epsilon.
\end{equation}
\end{lem}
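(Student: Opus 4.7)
My plan is to pick $\delta>0$ small enough and show, for every unit vector $v$ in the range of $r:=(cb^2c)_{[\lambda-\delta,1]}$, that $\|c_{[0,1-\epsilon]}v\|\leq\epsilon$. Since for any $w\in H$ with $rw\neq0$ we have $c_{[0,1-\epsilon]}rw = \|rw\|\cdot c_{[0,1-\epsilon]}(rw/\|rw\|)$ with $\|rw\|\leq\|w\|$, this pointwise estimate on $\mathcal{R}(r)$ immediately transfers to the operator inequality \eqref{c1-eeq}.

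Fix such a unit $v$ and set $p:=c_{[0,1-\epsilon]}$. From the spectral theorem applied to $cb^2c$, the lower bound is immediate:
\[\|bcv\|^2 = \langle cb^2c\, v, v\rangle \geq \lambda - \delta.\]
For the matching upper bound, observe that $c\leq q$ forces $c = qc$, so $cv = q(cv)\in\mathcal{R}(q)$; therefore
\[\|bcv\| = \|bq\cdot cv\| \leq \|bq\|\,\|cv\| \leq \sqrt{\lambda+\delta}\,\|cv\|.\]
Combining these two bounds yields $\|cv\|^2 \geq (\lambda-\delta)/(\lambda+\delta)$. Since $0\leq c\leq 1$, we have $c^2 + (1-c)^2 = 1 - 2c(1-c)\leq 1$ as positive operators, hence $\|cv\|^2 + \|(1-c)v\|^2 \leq 1$, giving
\[\|(1-c)v\|^2 \leq 1 - \frac{\lambda-\delta}{\lambda+\delta} = \frac{2\delta}{\lambda+\delta}.\]

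On the other hand, $p$ commutes with $c$ and satisfies $c\leq 1-\epsilon$ on $\mathcal{R}(p)$, so $(1-c)^2 \geq \epsilon^2 p$ as positive operators, which yields $\|(1-c)v\|^2\geq\epsilon^2\|pv\|^2$. Altogether,
\[\|pv\|^2 \leq \frac{2\delta}{\epsilon^2(\lambda+\delta)} \leq \frac{2\delta}{\epsilon^2\lambda},\]
so choosing $\delta:=\lambda\epsilon^4/2$ (further shrunk if necessary so that $\delta<\lambda$, ensuring $\lambda-\delta>0$) makes the right side at most $\epsilon^2$, and therefore $\|pv\|\leq\epsilon$ as required.

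The main conceptual step is recognising that the hypothesis $\|bq\|^2\leq\lambda+\delta$ cannot be applied usefully to $v$ itself, but rather to $cv$, by way of $c=qc$. The resulting two-sided sandwich $\lambda-\delta\leq\|bcv\|^2\leq(\lambda+\delta)\|cv\|^2$ forces $\|cv\|$ close to $1$, i.e.\ $c$ acts almost as the identity on $\mathcal{R}(r)$, and this is precisely what keeps $\mathcal{R}(r)$ away from the low-spectrum part of $c$. The rest is a standard operator-inequality bookkeeping, with a tuned choice of $\delta$.
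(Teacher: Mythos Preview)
Your proof is correct and follows essentially the same approach as the paper: both argue on vectors $v$ in the range of $(cb^2c)_{[\lambda-\delta,1]}$, use $c=qc$ to sandwich $\lambda-\delta\leq\|bcv\|^2\leq(\lambda+\delta)\|cv\|^2$, and then convert the resulting lower bound on $\|cv\|$ into an upper bound on $\|c_{[0,1-\epsilon]}v\|$. The only cosmetic difference is that the paper bounds $\|c_{[0,1-\epsilon]}v\|$ directly via the spectral decomposition inequality $\|cv\|^2\leq\|v\|^2-\epsilon\|c_{[0,1-\epsilon]}v\|^2$, obtaining $\delta=\lambda\epsilon^3/2$, whereas you route through $\|(1-c)v\|^2$ and the operator inequality $(1-c)^2\geq\epsilon^2 c_{[0,1-\epsilon]}$, obtaining $\delta=\lambda\epsilon^4/2$; the extra factor of $\epsilon$ is immaterial since only existence of $\delta$ is claimed.
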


\begin{proof}
If $\epsilon\geq1$ then \eqref{c1-eeq} holds trivially, so assume $\epsilon<1$.  For any $v\in H$,
\begin{eqnarray}
||cv||^2 &=& ||cc_{[0,1-\epsilon]}v||^2+||cc_{(1-\epsilon,1]}v||^2\nonumber\\
&\leq& (1-\epsilon)^2||c_{[0,1-\epsilon]}v||^2+||c_{(1-\epsilon,1]}v||^2\nonumber\\
&\leq& ||v||^2-\epsilon(2-\epsilon)||c_{[0,1-\epsilon]}v||^2\nonumber\\
&\leq& ||v||^2-\epsilon||c_{[0,1-\epsilon]}v||^2\qquad\textrm{(as $\epsilon\leq1$)}.\label{cv}
\end{eqnarray}
As $c\leq q$, we have $q^\perp cq^\perp\leq q^\perp qq^\perp=0$ and hence $q^\perp c=0$, i.e. $c=qc$.  Thus, for $v\in\mathcal{R}((cb^2c)_{[\lambda-\delta,1]})$,
\begin{eqnarray*}
(\lambda-\delta)||v||^2 &\leq& \langle cb^2c v,v\rangle\\
&=& ||bcv||^2\\
&\leq& ||bq||^2||cv||^2,\textrm{ as }c=qc,\\
&\leq& (\lambda+\delta)(||v||^2-\epsilon||c_{[0,1-\epsilon]}v||^2),\textrm{ by \eqref{cv}, so}\\
(\lambda+\delta)\epsilon||c_{[0,1-\epsilon]}v||^2 &\leq& 2\delta||v||^2\textrm{ and}\\
||c_{[0,1-\epsilon]}v||^2 &\leq& 2\delta||v||^2/(\lambda\epsilon),
\end{eqnarray*}
which immediately yields \eqref{c1-eeq}, for $\delta\leq\lambda\epsilon^3/2$.
\end{proof}

\begin{lem}\label{1-c}
For $\epsilon,\lambda>0$, there exists $\delta>0$ such that, whenever $b,c\in\mathcal{B}(H)^1_+$, $c\leq q\in\mathcal{P}(\mathcal{B}(H))$ and $||bq||^2\leq\lambda+\delta$, we have
\begin{equation}\label{1-ceq}
||(1-c)(cb^2c)_{[\lambda-\delta,1]}||\leq\epsilon.
\end{equation}
\end{lem}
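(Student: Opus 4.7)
The plan is to reduce this directly to \autoref{c1-e} via a spectral decomposition of $1-c$ according to whether the spectrum of $c$ is close to $1$ or not. The key observation is that on the range of the high spectral projection $c_{(1-\epsilon/2,1]}$, the operator $1-c$ is already small in norm, while the low spectral projection $c_{[0,1-\epsilon/2]}$ is exactly what \autoref{c1-e} controls.

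First I would split
\[1-c = (1-c)c_{[0,1-\epsilon/2]} + (1-c)c_{(1-\epsilon/2,1]},\]
and note that the second summand is a continuous function of $c$ whose supporting values on its spectral interval $(1-\epsilon/2,1]$ are bounded by $\epsilon/2$, so $\|(1-c)c_{(1-\epsilon/2,1]}\|\leq\epsilon/2$. For the first summand, $\|(1-c)c_{[0,1-\epsilon/2]}\|\leq 1$, so multiplying on the right by the spectral projection $(cb^2c)_{[\lambda-\delta,1]}$ and using submultiplicativity gives
\[\|(1-c)c_{[0,1-\epsilon/2]}(cb^2c)_{[\lambda-\delta,1]}\|\leq\|c_{[0,1-\epsilon/2]}(cb^2c)_{[\lambda-\delta,1]}\|.\]

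Next I would invoke \autoref{c1-e} with $\epsilon$ replaced by $\epsilon/2$ (and the same $\lambda$): this yields some $\delta>0$ such that whenever $c\leq q$, $b,c\in\mathcal{B}(H)^1_+$ and $\|bq\|^2\leq\lambda+\delta$, we have $\|c_{[0,1-\epsilon/2]}(cb^2c)_{[\lambda-\delta,1]}\|\leq\epsilon/2$. Combining with the triangle inequality applied to the original decomposition, I obtain
\[\|(1-c)(cb^2c)_{[\lambda-\delta,1]}\|\leq\epsilon/2+\epsilon/2=\epsilon,\]
which is \eqref{1-ceq}. Possibly one should also shrink $\delta$ further to ensure $\lambda-\delta\geq 0$ is sensible, but this is harmless.

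There is really no substantive obstacle here: the work was already done in \autoref{c1-e}, and the present lemma is essentially a bookkeeping corollary obtained by splitting $1-c$ along the natural threshold $1-\epsilon/2$ in $\sigma(c)$. The only mildly delicate point is choosing the threshold for the spectral split equal to (half of) the target $\epsilon$, so that the ``easy'' high-spectrum piece already contributes the correct fraction of the allowed error and leaves the remaining room for \autoref{c1-e} to supply the low-spectrum estimate.
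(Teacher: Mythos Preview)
Your proof is correct and follows essentially the same approach as the paper: split $1-c$ along a spectral threshold of $c$, bound the high-spectrum piece directly since $1-c$ is small there, and bound the low-spectrum piece by invoking \autoref{c1-e}. The only cosmetic difference is that the paper works at the level of vectors $v\in\mathcal{R}((cb^2c)_{[\lambda-\delta,1]})$ and combines the two pieces via the Pythagorean identity (hence threshold $1-\epsilon/\sqrt{2}$), whereas you combine via the triangle inequality at the norm level (hence threshold $1-\epsilon/2$).
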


\begin{proof}
Replacing $\epsilon$ with $\epsilon/\sqrt{2}$ in \autoref{c1-e}, we obtain $\delta>0$ such that, for any $v\in\mathcal{R}((cb^2c)_{[\lambda-\delta,1]})$,
\begin{eqnarray*}
||(1-c)v||^2 &=& ||(1-c)c_{[0,1-\epsilon/\sqrt{2})}v||^2+||(1-c)c_{[1-\epsilon/\sqrt{2},1]}v||^2\\
&\leq& ||c_{[0,1-\epsilon/\sqrt{2})}v||^2+\epsilon^2||c_{[1-\epsilon/\sqrt{2},1]}v||^2/2\\
&\leq& \epsilon^2||v||^2/2+\epsilon^2||v||^2/2,\textrm{ by \eqref{c1-eeq}}.
\end{eqnarray*}
\end{proof}

The following result generalizes \cite{Bice2009} Lemma 5.3.

\begin{lem}\label{lem2}
For $\epsilon,\lambda>0$, there exists $\delta>0$ such that, whenever $b,c\in\mathcal{B}(H)^1_+$, $c\leq q\in\mathcal{P}(\mathcal{B}(H))$ and $||bq||^2\leq\lambda+\delta$, we have
\begin{equation}\label{lem2eq}
||b_{[0,\sqrt{\delta}]}(cb^2c)_{[\lambda-\delta,1]}||^2\leq1-\lambda+\epsilon.
\end{equation}
\end{lem}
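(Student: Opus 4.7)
The plan is to fix a unit vector $v\in\mathcal{R}((cb^2c)_{[\lambda-\delta,1]})$ and show $\|b_{[0,\sqrt{\delta}]}v\|^2\leq 1-\lambda+\epsilon$ for a suitably small $\delta$. The two inputs I would combine are (i) the spectral inequality $\|bcv\|^2=\langle cb^2cv,v\rangle\geq\lambda-\delta$, which forces $v$ to carry substantial $b$-mass, and (ii) \autoref{1-c}, which tells us $\|(1-c)v\|$ is small when $\delta$ is small, letting us upgrade the lower bound from $cv$ to $v$ itself.

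Given $\epsilon>0$, I would first fix an auxiliary tolerance $\eta:=\epsilon/(4\sqrt{\lambda+1})$ and apply \autoref{1-c} with this $\eta$ (and the same $\lambda$) to produce $\delta_0>0$ such that $\|(1-c)(cb^2c)_{[\lambda-\delta_0,1]}\|\leq\eta$ whenever $\|bq\|^2\leq\lambda+\delta_0$. Shrinking $\delta$ below $\delta_0$ only shrinks the spectral projection $(cb^2c)_{[\lambda-\delta,1]}$, so the bound $\|(1-c)v\|\leq\eta$ persists for any unit $v\in\mathcal{R}((cb^2c)_{[\lambda-\delta,1]})$. Writing $v=cv+(1-c)v$ and using $\|b\|\leq 1$, the triangle inequality yields
\[\|bv\|\;\geq\;\|bcv\|-\|b(1-c)v\|\;\geq\;\sqrt{\lambda-\delta}-\eta,\]
so $\|bv\|^2\geq\lambda-\delta-2\eta\sqrt{\lambda+1}$ (absorbing the $\eta^2$ term).

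Next, I would split $v$ using the orthogonal spectral projections $b_0:=b_{[0,\sqrt{\delta}]}$ and $b_1:=b_{(\sqrt{\delta},1]}$. Since $b\in\mathcal{B}(H)^1_+$ forces $b_0+b_1=1$, and since $b$ is dominated by $\sqrt{\delta}$ on the range of $b_0$ and by $1$ on the range of $b_1$, we obtain
\[\|bv\|^2\;=\;\|bb_0v\|^2+\|bb_1v\|^2\;\leq\;\delta\|b_0v\|^2+\|b_1v\|^2\;\leq\;\delta+\|b_1v\|^2.\]
Combining this with $\|b_0v\|^2+\|b_1v\|^2=1$ and the lower bound just obtained for $\|bv\|^2$ gives
\[\|b_0v\|^2\;\leq\;1-\|bv\|^2+\delta\;\leq\;1-\lambda+2\delta+2\eta\sqrt{\lambda+1}.\]

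Finally, setting $\delta:=\min(\delta_0,\epsilon/4)$ makes $2\delta+2\eta\sqrt{\lambda+1}\leq\epsilon$, yielding \eqref{lem2eq}. The entire argument is essentially a careful juggling of tolerances: the only subtlety is that the same parameter $\delta$ must simultaneously control the hypothesis $\|bq\|^2\leq\lambda+\delta$ used to invoke \autoref{1-c} and the threshold $\lambda-\delta$ appearing inside the spectral projection. Choosing $\eta$ first, then selecting $\delta\leq\delta_0$ small enough to absorb the remaining slack, handles this cleanly, and I do not foresee any genuine analytic obstacle beyond this bookkeeping.
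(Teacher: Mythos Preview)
Your proof is correct and follows essentially the same approach as the paper's. Both arguments invoke \autoref{1-c} to replace $cv$ by $v$ with small error, then use the spectral decomposition $1=b_{[0,\sqrt{\delta}]}+b_{(\sqrt{\delta},1]}$ to bound $\|b_{[0,\sqrt{\delta}]}v\|^2$; the only cosmetic difference is that the paper works with the inner product $\langle b^2cv,cv\rangle\leq\langle b^2v,v\rangle+\tfrac{\epsilon}{2}$ directly (retaining the $\delta\|b_{[0,\sqrt{\delta}]}v\|^2$ term and dividing by $1-\delta$), whereas you use the triangle inequality $\|bv\|\geq\|bcv\|-\eta$ and crudely bound $\delta\|b_{[0,\sqrt{\delta}]}v\|^2\leq\delta$.
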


\begin{proof}  Let $\delta>0$ be that obtained in \autoref{1-c} from replacing $\epsilon$ with $\epsilon/4$.  If necessary, replace $\delta$ with a smaller non-zero number so that we also have
\begin{equation}\label{deltaeq}
(1-\lambda+\delta+\epsilon/2)/(1-\delta)\leq1-\lambda+\epsilon.
\end{equation}
Then, for all $v\in\mathcal{R}((cb^2c)_{[\lambda-\delta,1]})$,
\begin{eqnarray*}
(\lambda-\delta)||v||^2 &\leq& \langle cb^2cv,v\rangle\\
&=& \langle b^2cv,cv\rangle\\
&\leq& \langle b^2v,v\rangle+\epsilon||v||^2/2,\textrm{ by \eqref{1-ceq}},\\
&=& \langle b^2b_{[0,\sqrt{\delta}]}v,v\rangle+\langle b^2b_{(\sqrt{\delta},1]}v,v\rangle+\epsilon||v||^2/2\\
&\leq& \delta\langle b_{[0,\sqrt{\delta}]}v,v\rangle+\langle b_{(\sqrt{\delta},1]}v,v\rangle+\epsilon||v||^2/2\\
&=& \delta||b_{[0,\sqrt{\delta}]}v||^2+(||v||^2-||b_{[0,\sqrt{\delta}]}v||^2)+\epsilon||v||^2/2,\textrm{ so}\\
\quad(1-\delta)||b_{[0,\sqrt{\delta}]}v||^2 &\leq& (1-\lambda+\delta+\epsilon/2)||v||^2,\textrm{ and hence}\\
||b_{[0,\sqrt{\delta}]}v||^2 &\leq& (1-\lambda+\epsilon)||v||^2,\textrm{ by \eqref{deltaeq}}.
\end{eqnarray*}
\end{proof}

\begin{dfn}
For $\epsilon\in(0,1]$, we call $B\in\mathcal{H}(A)$ \emph{$\epsilon$-SSC} if, whenever $B\subsetneqq C\in\mathcal{H}(A)$, there exists $D\in\mathcal{H}(C)$ with $||p_Bp_D||<\epsilon$.
\end{dfn}

So the smaller $\epsilon$ is, the stronger the $\epsilon$-SSC property is, and if $B\in\mathcal{H}(A)$ is even $1$-SSC then it is SSC, according to \autoref{SSCSep}.  So the following result answers one direction of our original question.

\begin{thm}\label{1sep}
Any $B\in\mathcal{H}(A)$ with $B=B^{\perp\perp}$ is $1$-SSC.
\end{thm}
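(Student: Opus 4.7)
The plan is to apply \autoref{lem2} to produce a hereditary subalgebra $D\subseteq C$ whose open projection is quantitatively far from $p_B$ in operator norm. Since $B\subsetneq C$, pick a positive contraction $c_0\in C^1_+\setminus B$ (which exists as hereditary subalgebras are spanned by their positive parts). As $B=B^{\perp\perp}$, $c_0\notin B^{\perp\perp}$, so by definition there is some $a_0\in B^\perp$ with $a_0c_0\neq 0$; setting $a:=a_0^*a_0/\|a_0^*a_0\|\in B^\perp_+$ produces a positive contraction in $B^\perp$ still satisfying $ac_0\neq 0$. Let $q\in\mathcal{B}(H)$ be the range projection of $c_0$ in $A''$ and put $\mu:=\|aq\|^2$; since $c_0\leq q$ and $ac_0\neq 0$ force $aq\neq 0$, one has $\mu>0$.

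Feeding $\lambda:=\mu$ and $\epsilon:=\mu/2$ into \autoref{lem2} yields some $\delta>0$, which I shrink if necessary so that $\delta<\mu$. To invoke the lemma meaningfully with $b:=a$, I need some $c\leq q$ in $A^1_+\cap C$ with $\|ac\|^2>\mu-\delta$, so that the spectral projection $(ca^2c)_{[\mu-\delta,1]}$ is non-zero. The key observation is that $c_0^{1/n}\in C$ has the same support projection as $c_0$ (hence $c_0^{1/n}\leq q$) and converges to $q$ in the strong operator topology, so $\|ac_0^{1/n}\|^2\to\|aq\|^2=\mu$; I pick $n$ large enough that $c:=c_0^{1/n}$ satisfies $\|ca^2c\|=\|ac\|^2>\mu-\delta$. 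The hypothesis $\|aq\|^2=\mu\leq\lambda+\delta$ of \autoref{lem2} is then immediate, and the lemma delivers
\[\|a_{[0,\sqrt\delta]}(ca^2c)_{[\mu-\delta,1]}\|^2\leq 1-\mu+\tfrac{\mu}{2}=1-\tfrac{\mu}{2}.\]

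Because $a\in B^\perp$ annihilates $p_B$, we have $p_B\leq a_{\{0\}}\leq a_{[0,\sqrt\delta]}$, so the same bound persists with $p_B$ in place of $a_{[0,\sqrt\delta]}$. I then set $D:=A_{(ca^2c-(\mu-\delta))_+}$: since $ca^2c\in C$ by hereditariness, $D\in\mathcal{H}(C)$; since $\|ca^2c\|>\mu-\delta$, $D\neq\{0\}$; and its open projection $p_D=(ca^2c)_{(\mu-\delta,1]}$ is dominated by the closed spectral projection $(ca^2c)_{[\mu-\delta,1]}$, whence $\|p_Bp_D\|\leq\sqrt{1-\mu/2}<1$, which is exactly what $1$-SSC requires.

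The main obstacle is balancing the two competing spectral conditions in \autoref{lem2}: taking $\lambda=\|aq\|^2$ trivializes the hypothesis $\|bq\|^2\leq\lambda+\delta$ but threatens the non-triviality of $(cb^2c)_{[\lambda-\delta,1]}$, because $\|cb^2c\|$ may a priori fall well short of $\|aq\|^2$. The $n$-th root approximation $c_0\mapsto c_0^{1/n}$ is precisely what closes this gap, letting $\|ac\|^2$ approach $\|aq\|^2$ arbitrarily well while keeping $c$ inside $C$.
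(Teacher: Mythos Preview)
Your proof is correct and follows essentially the same approach as the paper's own argument: pick $c\in C^1_+\setminus B$, use $B=B^{\perp\perp}$ to find a positive contraction $b\in B^\perp$ not annihilating $c$, set $\lambda=\|bq\|^2$ where $q$ is the support projection of $c$, push $c$ toward $q$ via functional calculus so that $\|bc\|^2>\lambda-\delta$, and then invoke \autoref{lem2} together with $p_B\leq b_{\{0\}}$ to bound $\|p_B(cb^2c)_{(\lambda-\delta,1]}\|$ strictly below $1$. The only cosmetic differences are that the paper uses a flat-topped continuous function $f$ (with $f\equiv 1$ on $[\mu,1]$) in place of your $n$-th roots $c_0^{1/n}$, and leaves $\epsilon<\lambda$ arbitrary rather than fixing $\epsilon=\mu/2$.
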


\begin{proof}
Take $C\in\mathcal{H}(A)$ with $B\subsetneqq C$, so we have $c\in C^1_+\setminus B$.  This means we have $b\in B^{\perp1}_+$ with $bc\neq0$, and hence $bq\neq0$, where $q=c_{(0,1]}$.  Set $\lambda=||bq||^2$, take positive $\epsilon<\lambda$ and let $\delta>0$ be that obtained in \autoref{lem2}.  Note that we may now assume that $||bc||^2>\lambda-\delta$ by replacing $c$ with $f(c)$, where $f$ is a continuous function on $[0,1]$ with $[\mu,1]\subseteq f^{-1}\{1\}$, for sufficiently small $\mu>0$.  Set $p=(cb^2c)_{(\lambda-\delta,1]}\in\mathcal{P}(C'')^\circ\setminus\{0\}$.  As $p_B\leq b_{\{0\}}\leq b_{[0,\sqrt{\delta}]}$, \autoref{lem2} yields \[||p_Bp||^2\leq||b_{[0,\sqrt{\delta}]}(cb^2c)_{[\lambda-\delta,1]}||^2\leq1-\lambda+\epsilon<1.\]
\end{proof}

In fact, we can do much better than $1$-SSC, but first we need some more results.

\begin{lem}\label{lem3}
For $\epsilon,\lambda>0$, there exists $\delta>0$ such that, whenever $b,c\in\mathcal{B}(H)^1_+$, $p,q\in\mathcal{P}(\mathcal{B}(H))$, $b\leq p$, $c\leq q$ and $||pq||^2\leq\lambda+\delta$, we have
\begin{equation}\label{lem3eq}
||p(cb^2c)_{[\lambda-\delta,1]}||^2\leq\lambda+\epsilon.
\end{equation}
\end{lem}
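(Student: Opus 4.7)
The proof will follow the pattern of Lemma \ref{lem2}, using the same kind of spectral argument but swapping the small-spectral-window factor $b_{[0,\sqrt{\delta}]}$ for the containing projection $p$. The new input is the hypothesis $b\leq p$, which forces $bp^\perp=0$ and hence $\|bq\|\leq\|pq\|$, so that Lemma \ref{1-c} becomes applicable to our $b,c,q$ with $\lambda+\delta$ as the upper bound on $\|bq\|^2$.

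The main estimate will come from bounding $\|pv\|$ for a unit $v\in\mathcal{R}((cb^2c)_{[\lambda-\delta,1]})$ via the split $v=cv+(1-c)v$. Using $c=cq$ one gets $\|pcv\|=\|pqcv\|\leq\|pq\|\,\|cv\|\leq\sqrt{\lambda+\delta}$, and the second piece is controlled directly by Lemma \ref{1-c}: once some $\epsilon'>0$ is fixed in advance, that lemma produces a $\delta_1>0$ such that $\|(1-c)(cb^2c)_{[\lambda-\delta_1,1]}\|\leq\epsilon'$. Since spectral projections onto smaller intervals give smaller projections, this bound persists for the $[\lambda-\delta,1]$ projection whenever $\delta\leq\delta_1$, so $\|p(1-c)v\|\leq\epsilon'$.

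A triangle inequality then yields
\[\|pv\|^2 \leq \bigl(\sqrt{\lambda+\delta}+\epsilon'\bigr)^2 = \lambda+\delta+2\sqrt{\lambda+\delta}\,\epsilon'+\epsilon'^2,\]
and the proof reduces to a routine choice of parameters. The order is forced: first fix $\epsilon'>0$ small enough that $2\sqrt{\lambda+1}\,\epsilon'+\epsilon'^2\leq\epsilon/2$, then invoke Lemma \ref{1-c} to produce $\delta_1$, and finally set $\delta=\min(\delta_1,\epsilon/2,1)$ so that the right-hand side above is at most $\lambda+\epsilon$.

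I expect no real obstacle beyond this bookkeeping. The heavy lifting was already done in Lemma \ref{1-c}; the only subtle point distinguishing this statement from its predecessors is the use of $b\leq p$ to transfer the hypothesis $\|pq\|^2\leq\lambda+\delta$ into the $\|bq\|^2\leq\lambda+\delta$ form required by that lemma, and the care needed so that $\epsilon'$ is chosen before $\delta_1$.
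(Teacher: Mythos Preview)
Your proposal is correct and takes essentially the same approach as the paper: apply Lemma~\ref{1-c} (using $b\le p$ to get $\|bq\|\le\|pq\|$) and then estimate $\|pv\|$ via the split $v=cv+(1-c)v$ together with $c=qc$. The only cosmetic difference is that the paper expands $\|pv\|^2=\langle pv,v\rangle$ bilinearly to reach $\|pv\|^2\le\|pcv\|^2+\tfrac{\epsilon}{2}\|v\|^2$ directly, whereas you use the triangle inequality on $\|pv\|$ and square; both lead to the same conclusion with the same choice of $\delta$ up to constants.
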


\begin{proof}
Let $\delta>0$ be that obtained in \autoref{1-c} with $\epsilon$ replaced with $\epsilon/4$.  If necessary, decrease $\delta$ so that $\delta\leq\epsilon/2$.  Then, for $v\in\mathcal{R}((cb^2c)_{[\lambda-\delta,1]})$,
\begin{eqnarray*}
||pv||^2 &=& \langle pv,pv\rangle\\
&\leq& \langle pcv,pcv\rangle+\epsilon||v||^2/2\\
&\leq& (||pc||^2+\epsilon/2)||v||^2\\
&\leq& (\lambda+\delta+\epsilon/2)||v||^2.\\
&\leq& (\lambda+\epsilon)||v||^2.
\end{eqnarray*}
\end{proof}

For use in the next result, note that whenever $p,q\in\mathcal{P}(A)$ and $p\neq0$,
\begin{equation}\label{Pythag}
||pq||^2+||pq^\perp||^2\geq1.
\end{equation}
For simply take $v\in\mathcal{R}(p)\backslash\{0\}$ and note that \[||v||^2=||qv||^2+||q^\perp v||^2=||qpv||^2+||q^\perp pv||^2\leq(||qp||^2+||q^\perp p||^2)||v||^2.\]
Also note that 
\begin{equation}\label{pnearq}
||pq^\perp||^2\leq\lambda\quad\Leftrightarrow\quad pq^\perp p\leq\lambda p\quad\Leftrightarrow\quad(1-\lambda)p\leq pqp.
\end{equation}
In fact, the following result is a natural modulo-$\epsilon$ generalization of \eqref{Pythag} from $\mathcal{P}(A)$ to $\mathcal{H}(A)$, where $B$, $C$ and $D$ correspond to $q$, $p$ and $q^\perp$ respectively.

\begin{thm}\label{septhm}
For $\epsilon>0$ and $B,C\in\mathcal{H}(A)\setminus\{0\}$ with $||p_Bp_C||^2=\lambda<1$, there exists $D\in\mathcal{H}(A)$ with $||p_Bp_D||\leq\epsilon$ and $||p_Cp_D||^2\geq1-\lambda-\epsilon$.
\end{thm}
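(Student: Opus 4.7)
The plan is to adapt the spectral projection construction from the proof of \autoref{1sep} so that, without assuming $B = B^{\perp\perp}$, the resulting $p_D$ is almost orthogonal to $p_B$ rather than merely having $\|p_Bp_D\| < 1$. The key idea is to replace the operator $cb^2c$ used there with one whose range approximates the part of $\mathcal{R}(p_B^\perp)$ that still meets $\mathcal{R}(p_C)$, namely $aa^* = (1-b)c^2(1-b) \in A_+$ where $a = c - bc = (1-b)c \in A$.

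First, using the increasing approximate units of $B$ and $C$ together with the hypothesis $\|p_Bp_C\|^2 = \lambda$, pick $b \in B^1_+$ and $c \in C^1_+$ with $\|bc\|^2$ within some small $\delta$ of $\lambda$, where $\delta > 0$ depends on $\epsilon$.  Replacing $b$ by $f(b)$ for a step-like continuous $f:[0,1]\to[0,1]$ with $f(0)=0$ and $f(t)=1$ for $t\geq t_0$ arranges that $b$ essentially becomes a smoothed version of a compact spectral projection of the chosen approximate unit element, so that $1-b$ genuinely approximates $p_B^\perp$ on the relevant subspace.  Define $p_D = (aa^*)_{(1-\lambda-\delta',1]}$ for some $\delta' < \epsilon$, giving $D = A_{p_D} \in \mathcal{H}(A)$ since $p_D$ is an open projection in $A''$.

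That $p_D \neq 0$ and $\|p_Cp_D\|^2 \geq 1-\lambda-\epsilon$ should follow because $\|aa^*\| = \|(1-b)c\|^2$ approximates $\|p_B^\perp p_C\|^2 \geq 1-\lambda$ via \eqref{Pythag}, and since $\mathcal{R}(p_D) \subseteq \overline{\mathcal{R}(a)} = \overline{(1-b)\mathcal{R}(c)}$, there is significant overlap with $\mathcal{R}(p_C)$.  Making these estimates quantitative will require an analogue of \autoref{lem2} applied to the pair $((1-b), c)$ rather than $(b, c)$.

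The main obstacle will be proving $\|p_Bp_D\| \leq \epsilon$.  For a unit vector $v \in \mathcal{R}(p_D)$, the inequality $\langle(1-b)c^2(1-b)v, v\rangle \geq 1-\lambda-\delta'$ forces $(1-b)$ to act close to the identity on $v$, hence $\|bv\|$ is small.  The step-like choice of $b$ must then propagate $\|bv\|$ small into $\|p_Bv\|$ small: $b$ taking value near $0$ on $v$ places $v$ in the complement of the spectral projection of our chosen approximate unit element, which in turn approximates $p_B^\perp$.  Making this propagation quantitative requires a spectral/functional-calculus lemma in the style of \autoref{c1-e}, \autoref{1-c}, and \autoref{lem2}, but formulated for the pair $((1-b), c)$, establishing that the high spectral projection of $(1-b)c^2(1-b)$ is essentially supported away from $\mathcal{R}(p_B)$ up to the desired error $\epsilon$.
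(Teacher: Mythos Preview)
Your overall shape is close to the paper's argument, but the step you flag as ``the main obstacle'' is in fact a genuine obstruction, not just a technicality. From $\langle(1-b)c^2(1-b)v,v\rangle\geq1-\lambda-\delta'$ you only get $\|(1-b)v\|^2\geq1-\lambda-\delta'$, which (even when $b$ is projection-like) yields at best $\|bv\|^2\lesssim\lambda$, not $\|bv\|$ small. More seriously, even if $\|bv\|$ were small, there is no way to pass to $\|p_Bv\|$ small: the spectral projection $b_{[t_0,1]}^\perp$ of an approximate unit element contains $p_B^\perp$ but can also contain arbitrarily large pieces of $p_B$ (wherever the chosen approximate unit element happens to be small). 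No lemma in the style of \autoref{c1-e}--\autoref{lem2} can repair this, because those lemmas compare spectral projections of products like $cb^2c$ to spectral projections of the \emph{factors} $b,c$; none of them lets you climb from $b$ up to the possibly much larger $p_B$.

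The paper circumvents this by inserting an intermediate element $c'\in C^1_+$ concentrated on the top of the spectrum of $cb^2c$ (so $(cb^2c)_{[\lambda-\delta/2,1]}\leq c'\leq(cb^2c)_{[\lambda-\delta,1]}$), and then forming $a=(1-f(b))c'^2(1-f(b))$ rather than $(1-f(b))c^2(1-f(b))$. The point is that when compressed by $c'$, \emph{both} $p_B$ and $f(b)$ become approximately $\lambda\cdot\mathrm{id}$: one has $c'p_Bc'\leq\lambda c'^2$ from $\|p_Bp_C\|^2=\lambda$, and $c'_{(0,1]}b_{[\delta,1]}c'_{(0,1]}\geq(\lambda-\mu)c'_{(0,1]}$ from \autoref{lem2}. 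Hence $\|c'(p_B-f(b))c'\|\leq\mu$, which gives $\|p_Bap_B\|=\|(p_B-f(b))c'\|^2\leq\mu$ directly, without ever needing to compare $b$ to $p_B$. Your construction with the raw $c$ lacks this cancellation: $cb_{[\delta,1]}c$ is only large on the top spectral piece of $cb^2c$, not on all of $\mathcal{R}(c)$. So the missing idea is precisely this two-stage construction, first isolating $c'$ via \autoref{lem2}/\autoref{lem3} and only then forming the $(1-f(b))$-compression.
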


\begin{proof} Choose $\delta>0$ small enough that it satisfies \autoref{lem2} and \autoref{lem3} with $\epsilon$ replaced by some $\mu>0$, to be determined later.  Take $c\in C^1_+$ and $b\in B^1_+$ with $||bc||^2>\lambda-\delta/2$.  Take $c'\in C^1_+$ with $(cb^2c)_{[\lambda-\delta/2,1]}\leq c'\leq(cb^2c)_{[\lambda-\delta,1]}$ and let $a=(1-f(b))c'^2(1-f(b))$, where $f$ is continuous, $0$ on $[0,\delta/2]$ and $1$ on $[\delta,1]$, so
\begin{eqnarray}
||a|| &=& ||(1-f(b))c'||^2\nonumber\\
&\geq& ||b_{\{0\}}(cb^2c)_{[\lambda-\delta/2,1]}||^2\nonumber\\
&\geq& 1-||b_{(0,1]}(cb^2c)_{[\lambda-\delta,1]}||^2,\textrm{ by \eqref{Pythag}}\nonumber\\
&\geq& 1-\lambda-\mu,\textrm{ by \eqref{lem3eq}.}\label{||s||}
\end{eqnarray}
In particular, $||a||>0$ as long as $\mu<1-\lambda$, and we may define $a'=||a||^{-1}a$.

By \eqref{lem2eq}, we have
\begin{equation}\label{anothereq}
||b_{[0,\sqrt{\delta}]}c'_{(0,1]}||^2\leq||b_{[0,\sqrt{\delta}]}(cb^2c)_{[\lambda-\delta,1]}||^2\leq1-\lambda+\mu
\end{equation}
and so, by \eqref{pnearq},
\begin{equation}\label{BDeq}
c'_{(0,1]}b_{[\delta,1]}c'_{(0,1]}\geq c'_{(0,1]}b_{[\sqrt{\delta},1]}c'_{(0,1]}\geq(\lambda-\mu)c'_{(0,1]}.
\end{equation}
Thus
\begin{eqnarray*}
(1-\lambda-\mu)||p_Ba'p_B|| &\leq& ||a||||p_Ba'p_B||,\textrm{ by \eqref{||s||}}\\
&=& ||p_Bap_B||\\
&=& ||p_B(1-f(b))c'^2(1-f(b))p_B||\\
&=& ||(p_B-f(b))c'||^2\\
&=& ||c'(p_B-f(b))^2c'||\\
&\leq& ||c'(p_B-f(b))c'||\\
&=& ||c'(p_Cp_Bp_C-c'_{(0,1]}b_{[\delta,1]}c'_{(0,1]})c'||\\
&\leq& ||c'(\lambda-\lambda+\mu)c'||,\textrm{ by \eqref{BDeq} and }||p_Bp_C||^2=\lambda\\
&\leq& \mu,\textrm{ and hence},\\
||p_Ba'p_B|| &\leq& \mu/(1-\lambda-\mu).
\end{eqnarray*}
Set $p=a'_{(1-\mu,1]}\in\mathcal{P}(A'')^\circ$ and $D=A_p$.  Now
\[||p_Bp||^2=||p_Bpp_B||\leq||p_Ba'p_B||/(1-\mu)\leq\mu/((1-\lambda-\mu)(1-\mu))\]
so, as long as $\mu>0$ was chosen sufficiently small, $||p_Bp||\leq\epsilon$.

Also, $||(1-f(b))_{(0,1]}c'||^2\leq||b_{[0,\sqrt{\delta}]}c'_{(0,1]}||^2\leq1-\lambda+\mu$, by \eqref{anothereq}.  This means, as long as we chose $\mu$ at least half as small as the $\delta$ obtained in \autoref{lem2} (from the given $\epsilon$) we can apply \eqref{lem2eq} with $b$, $c$ and $\lambda$ replaced by $c'$, $1-f(b)$ and $1-\lambda$ to get
\[||c'_{[0,\sqrt{\mu}]}a'_{(1-\mu,1]}||^2\leq||c'_{[0,\sqrt{2\mu}]}((1-f_\delta(b))c'^2(1-f_\delta(b)))_{(1-\lambda-2\mu,1]}||^2\leq\lambda+\epsilon,\]
(the first inequality follows from \eqref{||s||} and the fact $(1-\mu)(1-\lambda-\mu)\geq1-\lambda-2\mu$).  Thus $||p_Cp||^2\geq||c'_{[\sqrt{\mu},1]}a'_{(1-\mu,1]}||^2\geq1-\lambda-\epsilon$.
\end{proof}

For any $1$-SSC $B\in\mathcal{H}(A)$, the first part of the above result can be applied within any hereditary C*-subalgebra containing $B$ to show that $B$ must actually be $\epsilon$-SSC, for any $\epsilon>0$.  So \autoref{1sep} can immediately be strengthened as follows.

\begin{cor}\label{epsep}
Any $B\in\mathcal{H}(A)$ with $B=B^{\perp\perp}$ is $\epsilon$-SSC, for all $\epsilon>0$.
\end{cor}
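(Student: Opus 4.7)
The plan is to bootstrap from the $1$-SSC conclusion of \autoref{1sep} using \autoref{septhm} to squeeze the norm of the overlap arbitrarily small. Fix $\epsilon > 0$ (the case $\epsilon \geq 1$ is already covered by \autoref{1sep}, so assume $\epsilon < 1$), and take any $C \in \mathcal{H}(A)$ with $B \subsetneqq C$. By \autoref{1sep}, since $B = B^{\perp\perp}$, $B$ is $1$-SSC, so there exists $D_0 \in \mathcal{H}(C)$ with $\|p_B p_{D_0}\| < 1$. Setting $\lambda = \|p_B p_{D_0}\|^2 < 1$, if $\lambda = 0$ we are done with $D = D_0$; otherwise we proceed to the key step.

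The key step is to invoke \autoref{septhm} on the pair $(B, D_0)$ with tolerance $\epsilon$. This produces some $D$ with $\|p_B p_D\| \leq \epsilon$, which is exactly the bound required for the definition of $\epsilon$-SSC, provided $D$ can be arranged to lie in $\mathcal{H}(C)$ rather than just $\mathcal{H}(A)$.

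The main obstacle is therefore ensuring $D \in \mathcal{H}(C)$. Two routes present themselves. The direct route is to apply \autoref{septhm} with the ambient C*-algebra taken to be $C$ instead of $A$, noting that both $B$ and $D_0$ already lie in $\mathcal{H}(C)$ and that the statement of \autoref{septhm} is formulated intrinsically in terms of an arbitrary C*-algebra; this immediately yields $D \in \mathcal{H}(C)$. Alternatively, one may inspect the explicit construction in the proof of \autoref{septhm}, where $D = A_p$ for $p = a'_{(1-\mu,1]}$ and $a' = \|a\|^{-1}(1 - f(b))c'^2(1 - f(b))$ is built from elements $b \in B^1_+$ and $c, c' \in D_0{}^1_+$; since $B, D_0 \subseteq C$ and $C$ is hereditary, we have $a' \in C$ and hence $p \leq p_C$, so $D = A_p \in \mathcal{H}(C)$.

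Either way, we obtain $D \in \mathcal{H}(C)$ with $\|p_B p_D\| \leq \epsilon$. Since $C \supsetneqq B$ was arbitrary, this verifies that $B$ is $\epsilon$-SSC. As $\epsilon > 0$ was arbitrary, the corollary follows. In summary, the entire argument reduces to the observation that \autoref{septhm} turns any non-trivial separation of $B$ from $D_0$ (however weak) into an arbitrarily strong one, and $1$-SSC is exactly what guarantees that such a weak initial separation always exists inside any $C \supsetneqq B$.
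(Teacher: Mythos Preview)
Your proof is correct and follows exactly the approach the paper indicates in the sentence immediately preceding the corollary: apply \autoref{1sep} to obtain a $D_0\in\mathcal{H}(C)$ with $\|p_Bp_{D_0}\|<1$, then invoke \autoref{septhm} \emph{inside} the hereditary C*-subalgebra $C$ to upgrade this to $\|p_Bp_D\|\leq\epsilon$ with $D\in\mathcal{H}(C)$. Your ``direct route'' is precisely what the paper means by ``applied within any hereditary C*-subalgebra containing $B$''; the alternative route via inspecting the construction is also sound but unnecessary. One cosmetic point: \autoref{septhm} gives only $\|p_Bp_D\|\leq\epsilon$, whereas the definition of $\epsilon$-SSC requires strict inequality, so you should apply the theorem with (say) $\epsilon/2$ in place of $\epsilon$; this is trivial and does not affect the argument.
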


It is natural to wonder if this can be strengthened just a little more to bring $\epsilon$ down to $0$, i.e. to show that whenever $B,C\in\mathcal{H}(A)$, $B=B^{\perp\perp}$ and $B\subsetneqq C$, we have $C\cap B^\perp\neq\{0\}$ (such a $B$ might well be called \emph{section $\perp$-semicomplemented} or \emph{orthomodular}).  The following example shows that this is not possible in general.

\begin{xpl}\label{C01K}
Let $A=C([0,1],\mathcal{K}(H))$, where $\mathcal{K}(H)$ is the C*-algebra of compact operators on a separable infinite dimensional Hilbert space $H$.  Identify $A''$ (in the atomic representation) with all bounded functions from $[0,1]$ to $\mathcal{B}(H)$.  Now let $p_n\in A$ be the rank $1$ projection onto $\mathbb{C}e_n$, for each $n\in\mathbb{N}$, where $(e_n)$ is an orthonormal basis for $H$.  Also let $(r_n)$ enumerate a countable dense subset of $(0,1)$ and let $\chi_S$ denote the characteristic function of $S\subseteq[0,1]$.  Consider $p=\bigvee\chi_{[0,r_n)}p_n\in\mathcal{P}(A'')^\circ$ and $p'=\bigvee\chi_{(r_n,1]}p_n\in\mathcal{P}(A'')^\circ$.  For any $a\in A_p^{\perp1}$, $p_na(x)=0$, for all $x\in[0,r_n)$ which, as $a$ is continuous, means $p_na(r_n)=0$ too so $a\leq p'$.  Thus $A_{p'}=A_p^\perp$ and, likewise, $A_p=A_{p'}^\perp=A_p^{\perp\perp}$.

Now let $q$ be the (constant) rank one projection onto $v=\sum2^{-n}e_n$ and consider $p\vee q\in\mathcal{P}(A'')^\circ$.  As $q\in A_{p\vee q}\setminus A_p$, we certainly have $A_p\subsetneqq A_{p\vee q}$.  But $p\vee q-p$ is a rank 1 projection on $(0,1)$ which is discontinuous on the dense subset $(r_n)$, so $A_{p\vee q}\cap A_p^\perp=A_{p\vee q-p}=\{0\}$.
\end{xpl}

Furthermore, the $A_p$ above is SSC in $\mathcal{H}(A)$, by \autoref{1sep}, and so certainly SSC in $\mathcal{H}(A_{p\vee q})$.  But we just showed that $A_p$ is not a *-annihilator in $A_{p\vee q}$, and thus we can not hope to use the SSC property to characterize *-annihilators in general.

\begin{qst}
Is there an order theoretic characterization of *-annihilators in $\mathcal{H}(A)$?
\end{qst}

Still considering \autoref{C01K} above, note that, as $A_p$ is $1$-SSC in $A_{p\vee q}$, we have $b\in B^1_+\in\mathcal{H}(A_{p\vee q})$ with $||bp||\leq||p_Bp||<1=||b||$, despite the fact $p$ is dense in $p\vee q$ (equivalently, $(p\vee q-p)^\circ=0$), i.e. $p$ is \emph{non-regular} in the sense of \cite{Tomita1959} (this concept of regularity has little to do with the topological regularity of open sets discussed earlier).  The question of whether there exist open dense non-regular projections was mentioned as an open problem in \cite{PeligradZsido2000}, and the first examples were given in \cite{AkemannEilers2002} (which inspired our construction of \autoref{C01K}).  In \cite{AkemannEilers2002}, a constant $\gamma$ was even defined to measure the degree of regularity of an open dense projection $p$, essentially by \[\gamma(p)=\inf_{q\in\mathcal{P}(A'')^\circ}||pq||,\] where $\gamma(p)=1$ means $p$ is regular and lower $\gamma(p)$ values signify lower regularity.  However, \autoref{septhm} shows that if $\gamma(p)<1$ then, in fact, $\gamma(p)=0$, i.e. any non-regular open dense projection must actually be as non-regular as possible.

\section{The *-Annihilator Ortholattice}\label{*AO}

We have just seen in the previous section (and \S\ref{*AI}) that *-annihilators have special properties within $\mathcal{H}(A)$, and one might guess they could be worthy of study in their own right.  Indeed, a surprisingly detailed *-annihilator theory, closely resembling the basic theory of projections in von Neumann algebras, can be developed even in the much broader context of *-semigroups (see \cite{Bice2014c}).  Here we investigate what more can be said about them in the C*-algebra context, and how closely related the *-annihilator ortholattice $\mathscr{P}(A)^\perp$ is to the hereditary C*-subalgebra lattice $\mathcal{H}(A)$.

Again consider a topological space $X$, but this time assume it also satisfies the $T_3$ separation axiom, i.e. any disjoint point and closed subset have disjoint neighbourhoods.  Thus, whenever $x\in O\in\mathscr{P}(X)^\circ$, we have $N\in\mathscr{P}(X)^\circ$ with $x\in N$ and $\overline{N}\subseteq O$, and hence $x\in\overline{N}^\circ\subseteq O$.  As $x\in O$ was arbitrary, \[O=\bigcup\{N\subseteq X:N=\overline{N}^\circ\subseteq O\},\] i.e. the regular open subsets of $X$ are $\bigvee$-dense in $\mathscr{P}(X)^\circ$.  As any locally compact Hausdorff space is $T_3$, it follows that $\mathscr{P}(A)^\perp$ is $\bigvee$-dense in $\mathcal{H}(A)$ whenever $A$ is commutative.  Yet again, the commutativity assumption here is unnecessary, as we now show.

First note that, for any $a\in A^1_+$ and $\lambda\in(0,1)$, $a_{[0,\lambda)}$ is open.  Indeed, letting $f$ be any continuous function on $[0,1]$ that is non-zero on $[0,\lambda)$ and $0$ on $[\lambda,1]$, we have $a_{[0,\lambda)}=f(a)_{(0,1]}=p_{\overline{f(a)Af(a)}}$.  The only slight problem is that $f(a)\notin A$ when $A$ is not unital, but we still have $a_{[0,\lambda)}=p_B$ for some $B\in\mathcal{H}(A)$, by \cite{Pedersen1979} Proposition 3.11.9.  As $A_{a_{[0,\lambda)}}\subseteq A_{a_{(\lambda,1]}}^\perp$, we have $A_{a_{(\lambda,1]}}^{\perp\perp}\subseteq A_{a_{[0,\lambda)}}^\perp\subseteq A_{a_{[\lambda,1]}}$.  In other words, letting $p=p_{A_{a_{(\lambda,1]}}^{\perp\perp}}\in\mathcal{P}(A'')^\circ$, we have $A_p^{\perp\perp}=A_p$ and
\begin{equation}\label{annproj}
a_{(\lambda,1]}\leq p\leq a_{[\lambda,1]}.
\end{equation}

Now take any $B\in\mathcal{H}(A)$ and let $(f_n)$ be a sequence of continuous functions on $[0,1]$ uniformly approaching the identity with $[0,1/n]\subseteq f_n^{-1}\{0\}$, for all $n\in\mathbb{N}$.  Then, for any $b\in B^1_+$ and $n\in\mathbb{N}$, we have $p\in\mathcal{P}(A'')^\circ$ with $A_p=A_p^{\perp\perp}$ and $b_{(1/n,1]}\leq p\leq b_{[1/n,1]}$ and hence $f_n(b)\in A_p\subseteq\overline{bAb}\subseteq B$.  As $f_n(b)\rightarrow b$, we have \[b\in B_\vee=\bigvee(\mathscr{P}(A)^\perp\cap\mathcal{H}(B))=\bigvee\{C\in\mathcal{H}(B):C^{\perp\perp}=C\}.\] As $b$ was arbitrary, $B=B_\vee$.  As $B$ was arbitrary, $\mathscr{P}(A)^\perp$ is $\bigvee$-dense in $\mathcal{H}(A)$.

Any complete lattice $\mathbb{Q}$ that is $\bigvee$-dense in another poset $\mathbb{P}$ must in fact be a complete $\wedge$-sublattice of $\mathbb{P}$, i.e. infimums in $\mathbb{Q}$ are also valid in $\mathbb{P}$.  For if $q$ is the infimum of $S$ in $\mathbb{Q}$ and $p\in\mathbb{P}$ satisfies $p\leq s$, for all $s\in S$, then
\[p=\bigvee\{r\in\mathbb{Q}:r\leq p\}\leq\bigvee\{r\in\mathbb{Q}:\forall s\in S(r\leq s)\}=q.\]
In particular, $\mathscr{P}(A)^\perp$ is a complete $\wedge$-sublattice of $\mathcal{H}(A)$, although this can also be seen directly from $\bigcap_\alpha(B_\alpha^\perp)=(\bigcup_\alpha B_\alpha)^\perp$.  However, it is important to note that $\mathscr{P}(A)^\perp$ is not a $\vee$-sublattice of $\mathcal{H}(A)$, even for commutative $A$.  For example, $[0,\frac{1}{2})$ and $(\frac{1}{2},1]$ are regular open subsets of $[0,1]$ even though $[0,\frac{1}{2})\cup(\frac{1}{2},1]$ is not.

\begin{prp}\label{SSC=sep}
Assume $\mathbb{Q}$ is a $\bigvee$-dense $\wedge$-sublattice of SSC elements in $\mathbb{P}$.  Then every $q\in\mathbb{Q}$ is separative in both $\mathbb{P}$ and $\mathbb{Q}$.
\end{prp}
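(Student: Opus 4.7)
The plan is to fix $q\in\mathbb{Q}$ and $p\in\mathbb{P}$ with $p\nleq q$, and to produce $r\in\mathbb{P}$ with $0<r\leq p$ and $r\wedge q=0$, which is exactly separativity of $q$ in $\mathbb{P}$. First I would invoke $\bigvee$-density: $p=\bigvee\{q'\in\mathbb{Q}:q'\leq p\}$, so since $p\nleq q$ at least one such $q'\in\mathbb{Q}$ must also satisfy $q'\nleq q$. Then $q\wedge q'\in\mathbb{Q}$, using that $\mathbb{Q}$ is a $\wedge$-sublattice, and $q\wedge q'<q'$, because $q'\nleq q$ forces the inequality to be strict.

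The second step is to apply the SSC hypothesis to $q\wedge q'\in\mathbb{Q}$: from $q'>q\wedge q'$ we obtain $r\in\mathbb{P}$ with $0<r\leq q'\leq p$ and $r\wedge(q\wedge q')=0$. The key little bookkeeping observation is that this $r$ is automatically a $\wedge$-semicomplement of the larger element $q$, not just of $q\wedge q'$. Indeed, $r\leq q'$ forces $r\wedge q\leq q\wedge q'$, hence $r\wedge q\leq r\wedge(q\wedge q')=0$. That proves separativity of $q$ in $\mathbb{P}$.

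For separativity in $\mathbb{Q}$, I would take the witness $p$ from $\mathbb{Q}$ rather than $\mathbb{P}$, run exactly the same argument to obtain $r\in\mathbb{P}$ with $0<r\leq p$ and $r\wedge q=0$, and then invoke $\bigvee$-density once more: since $r=\bigvee\{s\in\mathbb{Q}:s\leq r\}\neq0$, the set $\{s\in\mathbb{Q}:s\leq r\}$ cannot be contained in $\{0\}$, so some $s\in\mathbb{Q}$ satisfies $0<s\leq r\leq p$. Then $s\wedge q\leq r\wedge q=0$, and since meets in $\mathbb{Q}$ coincide with meets in $\mathbb{P}$, this $s$ is the desired $\mathbb{Q}$-witness. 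I do not anticipate a substantive obstacle here; the whole argument is essentially one application of density to descend into $\mathbb{Q}$, one use of SSC applied to the meet $q\wedge q'$, and a second density step to lift the conclusion back into $\mathbb{Q}$ when required.
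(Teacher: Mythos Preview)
Your proof is correct and follows essentially the same route as the paper's: use $\bigvee$-density to drop from $p$ to some $q'\in\mathbb{Q}$ with $q'\nleq q$, apply the SSC hypothesis to the element $q\wedge q'\in\mathbb{Q}$ (using $q'>q\wedge q'$), observe that the resulting $\wedge$-semicomplement of $q\wedge q'$ below $q'$ is automatically a $\wedge$-semicomplement of $q$, and then invoke density once more to land the witness in $\mathbb{Q}$. The paper's proof is identical up to naming of variables and a slight compression of the bookkeeping step $r\wedge q=r\wedge q\wedge q'=0$.
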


\begin{proof}
Take $q\in\mathbb{Q}\setminus\{0\}$ and $p\in\mathbb{P}\setminus\{0\}$ with $p\nleq q$.  As $\mathbb{Q}$ is $\bigvee$-dense in $\mathbb{P}$, we have $r\in\mathbb{Q}\setminus\{0\}$ with $r\leq p$ and $r\nleq q$, and hence $q\wedge r<r$.  But $\mathbb{Q}$ is a $\wedge$-sublattice so $q\wedge r\in\mathbb{Q}$.  As elements of $\mathbb{Q}$ are SSC in $\mathbb{P}$, we have $s\in\mathbb{P}\setminus\{0\}$ with $s\leq r\leq p$ and $s\wedge q=s\wedge q\wedge r=0$.  Thus $q$ is separative in $\mathbb{P}$ and, again using join density (actually order density would be enough), we can replace $s$ with an element of $\mathbb{Q}$ to show that $q$ is separative in $\mathbb{Q}$ too.
\end{proof}

In particular, any SSC $\wedge$-lattice is separative.  Also, \autoref{1sep} now immediately yields the following.

\begin{cor}
Every $B\in\mathscr{P}(A)^\perp$ is separative in both $\mathcal{H}(A)$ and $\mathscr{P}(A)^\perp$.
\end{cor}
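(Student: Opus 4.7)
The plan is to apply \autoref{SSC=sep} with $\mathbb{P}=\mathcal{H}(A)$ and $\mathbb{Q}=\mathscr{P}(A)^\perp$. Three hypotheses then need to be checked: that $\mathscr{P}(A)^\perp$ is $\bigvee$-dense in $\mathcal{H}(A)$, that it is a $\wedge$-sublattice, and that each of its elements is SSC in $\mathcal{H}(A)$. The first two are already established in the discussion immediately preceding \autoref{SSC=sep}: $\bigvee$-density comes from the spectral approximation $f_n(b)\to b$ combined with \eqref{annproj}, which places each $f_n(b)$ into a *-annihilator hereditary C*-subalgebra of $B$; the $\wedge$-sublattice condition follows from $\bigcap_\alpha B_\alpha^\perp=(\bigcup_\alpha B_\alpha)^\perp$, so $\mathscr{P}(A)^\perp$ is even closed under arbitrary meets taken in $\mathcal{H}(A)$.

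For the third condition, the plan is to quote \autoref{1sep}, which asserts that every $B\in\mathscr{P}(A)^\perp$ is $1$-SSC in $\mathcal{H}(A)$. I would then check the small implication that $1$-SSC implies SSC in the sense of \autoref{SSCSep}: given $B\subsetneqq C$, $1$-SSC provides nonzero $D\in\mathcal{H}(C)$ with $\|p_Bp_D\|<1$, and if a common subprojection $r\leq p_B,p_D$ existed then for any unit $v\in\mathcal{R}(r)$ we would have $p_Bp_Dv=v$, contradicting $\|p_Bp_D\|<1$; hence $p_B\wedge p_D=0$ in $\mathcal{P}(A'')$ and thus certainly in $\mathcal{P}(A'')^\circ$, i.e.\ $B\cap D=\{0\}$. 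So $D$ is a nonzero $\wedge$-semicomplement of $B$ below $C$, which is exactly SSC.

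With all three hypotheses in place, \autoref{SSC=sep} immediately yields separativity of every $B\in\mathscr{P}(A)^\perp$ in both $\mathcal{H}(A)$ and $\mathscr{P}(A)^\perp$, so the corollary follows. There is no real obstacle: the corollary is essentially a packaging statement combining the hard work of \autoref{1sep} (the spectral inequality \autoref{lem2} and the construction of a $\wedge$-semicomplement from a non-orthogonal element) with the abstract lattice-theoretic lifting in \autoref{SSC=sep}. The only point requiring care is the small verification that $1$-SSC is genuinely stronger than the order-theoretic SSC property of \autoref{SSCSep}, which I have sketched above.
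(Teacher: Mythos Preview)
Your proposal is correct and follows essentially the same route as the paper: the corollary is stated immediately after \autoref{SSC=sep} with the remark that \autoref{1sep} ``now immediately yields'' it, relying on exactly the $\bigvee$-density and $\wedge$-sublattice facts established just before. Your explicit verification that $1$-SSC implies SSC is a welcome addition, since the paper only asserts this in passing before \autoref{1sep}.
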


Next we examine the elements of $\mathscr{P}(A)^\perp$ with special order properties, as in the previous sections.  Note that, as $\mathscr{P}(A)^\perp$ is not a $\vee$-sublattice of $\mathcal{H}(A)$, various lattice theoretic concepts can potentially have very different meanings in $\mathscr{P}(A)^\perp$ and $\mathcal{H}(A)$.  For example, complements in $\mathcal{H}(A)$ are quite special, while every $B\in\mathscr{P}(A)^\perp$ has a complement in $\mathscr{P}(A)^\perp$, namely $B^\perp$.  In fact, $^\perp$ is an \emph{orthocomplementation} on $\mathcal{P}(A)^\perp$ (while it is merely a \emph{Galois $\wedge$-semicomplementation} on $\mathcal{H}(A)$), which makes $\mathcal{P}(A)^\perp$ an \emph{ortholattice}, giving us access to ortholattice theory and concepts, like the following.

\begin{dfn}\footnote{For more information on the commutativity relation $\mathrm{C}$, particularly in orthomodular lattices, see \cite{Kalmbach1983} or \cite{Beran1985}.  On the other hand, the Elkan relation $\mathrm{E}$ does not seem to have been formally defined before, although a similar global condition, \emph{Elkan's Law}, was studied in \cite{DudekKondo2008}.}
In an ortholattice $\mathbb{P}$, we define the \emph{commutativity} relation $\mathrm{C}$ by
\[p\mathrm{C}q\quad\Leftrightarrow\quad p=(p\wedge q)\vee(p\wedge q^\perp).\]
We also define the $\emph{Elkan}$ relation $\mathrm{E}$ by
\[p\mathrm{E}q\quad\Leftrightarrow\quad p\vee q=(p\wedge q^\perp)\vee q.\]
\end{dfn}

\begin{prp}\label{so}
For $q$ in a separative ortholattice $\mathbb{P}$, the following are equivalent.
\begin{enumerate}
\item\label{so1} $q$ is central.
\item\label{so2} $q$ is $\vee$-distributive.
\item\label{so3} $p\mathrm{C}q$, for all $p\in\mathbb{P}$.
\item\label{so4} $p\mathrm{E}q$, for all $p\in\mathbb{P}$.
\item\label{so5} $q$ is a $\wedge$-pseudocomplement of $q^\perp$.
\end{enumerate}
\end{prp}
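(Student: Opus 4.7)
The plan is a cyclic chain $(1)\Rightarrow(2)\Rightarrow(3)\Rightarrow(4)\Rightarrow(5)\Rightarrow(1)$, with separativity doing the heavy lifting in the backward steps. For $(1)\Rightarrow(2)$ I use the product decomposition $\mathbb{P}\cong[0,q]\times[0,q^\perp]$: $q$ corresponds to $(q,0)$ and meeting with joins is componentwise, so $\vee$-distributivity is immediate. For $(2)\Rightarrow(3)$ I specialize (2) to $b=q^\perp$ to get $q\wedge(p\vee q^\perp)=p\wedge q$; de Morgan plus the substitution $p\mapsto p^\perp$ yields the Elkan identity $p\vee q^\perp=(p\wedge q)\vee q^\perp$ for every $p$. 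Then if $p\mathrm{C}q$ failed, separativity would produce $0<r\leq p$ disjoint from $(p\wedge q)\vee(p\wedge q^\perp)$, forcing $r\wedge q=r\wedge q^\perp=0$; applying the Elkan identity to $r$ gives $r\vee q^\perp=q^\perp$, i.e.\ $r\leq q^\perp$, whence $r=r\wedge q^\perp=0$, a contradiction.

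The steps $(3)\Rightarrow(4)\Rightarrow(5)$ are routine: joining the identity $p=(p\wedge q)\vee(p\wedge q^\perp)$ with $q$ yields $p\vee q=(p\wedge q^\perp)\vee q$, and from that, $r\wedge q^\perp=0$ forces $r\vee q=q$, hence $r\leq q$, showing $q$ is the maximum $\wedge$-semicomplement of $q^\perp$.

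The main obstacle lies in $(5)\Rightarrow(1)$. Mirroring the $(2)\Rightarrow(3)$ argument with (5) playing the role of the Elkan identity, I first recover (3); symmetry of (3) in $q$ and $q^\perp$ then yields the dual form of (5), namely $r\wedge q=0\Rightarrow r\leq q^\perp$. With (3) in hand the composition $\psi\circ\phi$, where $\phi(p)=(p\wedge q,p\wedge q^\perp)$ and $\psi(a,b)=a\vee b$, is the identity on $\mathbb{P}$, so $\phi$ is injective and $\psi$ surjective. For centrality I still need $\phi\circ\psi=\mathrm{id}$, i.e.\ $(a\vee b)\wedge q=a$ for $a\leq q$ and $b\leq q^\perp$ (and dually for $q^\perp$). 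The Elkan identity for $q^\perp$ yields $a\vee q^\perp=((a\vee b)\wedge q)\vee q^\perp$; writing $x=(a\vee b)\wedge q$ gives $a\vee q^\perp=x\vee q^\perp$ with $a\leq x\leq q$, so the remaining task is to cancel $q^\perp$ across this equation. Supposing $a<x$ and invoking separativity to extract $0<s\leq x$ with $s\wedge a=0$, one immediately obtains $s\leq a\vee q^\perp$ with $s$ disjoint from both $a$ and $q^\perp$; the subtle core of the proof is to combine the two-sided pseudocomplement structure of $(q,q^\perp)$ afforded by (5) with separativity to force $s\leq a$, contradicting $s\wedge a=0$. This is essentially an orthomodular-type cancellation extracted from the hypotheses without assuming orthomodularity outright, and it is the step I expect to demand the most careful bookkeeping. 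Once the cancellation and its $q^\perp$-analogue are in place, $\phi$ promotes from an order-embedding to a full lattice isomorphism, establishing centrality.
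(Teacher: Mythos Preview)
Your cyclic scheme and the separativity arguments are sound through the point where you recover $(3)$ inside your $(5)\Rightarrow(1)$; that step is exactly the paper's $(5)\Rightarrow(3)$ argument.  The gap is in what follows.  You reduce centrality to the cancellation
\[
a\leq x\leq q,\qquad a\vee q^\perp=x\vee q^\perp\ \Longrightarrow\ a=x,
\]
and propose to extract it from another pass of separativity: pick $0<s\leq x$ with $s\wedge a=0$, note $s\leq a\vee q^\perp$ and $s\wedge q^\perp=0$, and then ``combine the two-sided pseudocomplement structure with separativity to force $s\leq a$''.  But nothing you have at that stage relates $s$ to $a$ beyond $s\leq a\vee q^\perp$: condition $(5)$ and its dual only tell you $s\leq q$ (which you already know), and applying $(3)$ or the Elkan identities to $s$ just returns tautologies.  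I do not see how this closes, and you leave it as a sketch.

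The point you are missing is that $(3)\Rightarrow(1)$ holds in \emph{every} ortholattice, with no separativity needed; this is MacLaren's theorem, which the paper simply cites.  The trick is to feed $(3)$ the \emph{complement} of the element you care about: for $a\leq q$ apply $(3)$ with $p=a^\perp$.  Since $q^\perp\leq a^\perp$, one gets $a^\perp=(a^\perp\wedge q)\vee q^\perp$, and de~Morgan yields
\[
a=(a\vee q^\perp)\wedge q.
\]
Hence for $b\leq q^\perp$ one has $(a\vee b)\wedge q\leq(a\vee q^\perp)\wedge q=a$, so $\phi\circ\psi=\mathrm{id}$ and $q$ is central.  With this in hand your whole cycle goes through, and in fact your entire $(5)\Rightarrow(1)$ collapses to the paper's two-step $(5)\Rightarrow(3)\Rightarrow(1)$: separativity once, MacLaren once.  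The second invocation of separativity you were bracing for is simply not required.
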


\begin{proof}
Even without separativity, we immediately see that
\[\eqref{so1}\quad\Rightarrow\quad\eqref{so2}\textrm{ or }\eqref{so3}\quad\Rightarrow\quad\eqref{so4}\quad\Rightarrow\quad\eqref{so5},\]
and $\eqref{so3}\Rightarrow\eqref{so1}$, by \cite{MacLaren1964} Theorem 3.2.  While if \eqref{so3} fails then, for some $p\in\mathbb{P}$, we have $(p\wedge q)\vee(p\wedge q^\perp)<p$.  If $\mathbb{P}$ is separative/SSC, then we have non-zero $r\leq p$ with $r\wedge q=r\wedge p\wedge q\leq r\wedge((p\wedge q)\vee(p\wedge q^\perp))=0$.  Likewise, $r\wedge q^\perp=0$, which means $q$ is not a $\wedge$-pseudocomplement of $q^\perp$, proving $\eqref{so5}\Rightarrow\eqref{so3}$ (a similar argument appears in the proof of \cite{MaedaMaeda1970} Theorem (4.18)).
\end{proof}

\begin{thm}\label{anncentre}
For any $B\in\mathscr{P}(A)^\perp$, the following are equivalent in $\mathscr{P}(A)^\perp$.
\begin{enumerate}
\item\label{ac1} $B$ is an ideal.
\item\label{ac2} $B$ is central.
\item\label{ac3} $B$ is $\vee$-distributive.
\item\label{ac4} $B$ is/has a $\wedge$-pseudocomplement.
\item\label{ac5} $B$ is/has a unique complement.
\end{enumerate}
\end{thm}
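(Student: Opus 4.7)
The plan hinges on Proposition \autoref{so}, which applies because the corollary immediately preceding this theorem makes $\mathscr{P}(A)^\perp$ a separative ortholattice with orthocomplementation $B \mapsto B^\perp$. Proposition \autoref{so} then gives the equivalence of (2), (3), and the auxiliary condition that $B$ be the $\wedge$-pseudocomplement of $B^\perp$ in $\mathscr{P}(A)^\perp$; this auxiliary condition serves as the hub through which the other implications will flow.

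The bridge (1) $\Leftrightarrow$ (2) is obtained by transferring the pseudocomplement condition between $\mathscr{P}(A)^\perp$ and $\mathcal{H}(A)$. For (1) $\Rightarrow$ (2): if $B$ is an ideal, so is $B^\perp$, and applying Proposition \autoref{halfa} to $B^\perp$ gives that $B = B^{\perp\perp}$ is the pseudocomplement of $B^\perp$ in $\mathcal{H}(A)$; this passes to $\mathscr{P}(A)^\perp$ immediately, since suprema of *-annihilators differ from those in $\mathcal{H}(A)$ only by $(\cdot)^{\perp\perp}$. For the converse, the $\bigvee$-density of $\mathscr{P}(A)^\perp$ in $\mathcal{H}(A)$ lets me lift: any $C \in \mathcal{H}(A)$ with $C \cap B^\perp = \{0\}$ is the $\bigvee_{\mathcal{H}}$ of the *-annihilators $R \subseteq C$, each still satisfying $R \cap B^\perp = \{0\}$ and hence (by the hub condition) $R \subseteq B$, so $C \subseteq B$. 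Thus $B$ is the pseudocomplement of $B^\perp$ in $\mathcal{H}(A)$, and Theorem \autoref{pseudo} concludes $B = (B^\perp)^\triangledown$, which is automatically an ideal. Condition (4) is handled by the same machinery: (2) $\Rightarrow$ (4) is trivial with $B^\perp$ as witness, and conversely any pseudocomplement in $\mathscr{P}(A)^\perp$ lifts by the same $\bigvee$-density argument to one in $\mathcal{H}(A)$, whereupon Theorem \autoref{pseudo} identifies the witness with a $(\cdot)^\triangledown$, exhibiting $B$ as an ideal in either the ``is'' or ``has'' case.

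For (5), the direction (2) $\Rightarrow$ (5) follows from the product decomposition supplied by centrality: any complement $D$ of a central $B$ satisfies $D = (D \wedge B) \vee_{\mathscr{P}(A)^\perp} (D \wedge B^\perp) = D \wedge B^\perp \leq B^\perp$, and distributivity of the central $B^\perp$ then forces $D = B^\perp$ (with a dual argument for the ``is'' case). The main obstacle is the converse (5) $\Rightarrow$ (1): uniqueness of a complement does not a priori yield a pseudocomplement, so (4) cannot be invoked directly. My plan is to proceed by contraposition using the construction inside the proof of Proposition \autoref{so}: if $B$ is not central, that argument extracts a non-zero $R \in \mathscr{P}(A)^\perp$ with $R \wedge B = R \wedge B^\perp = \{0\}$, and the delicate task is to combine such an $R$ with $B^\perp$ (e.g., forming $B^\perp \vee_{\mathscr{P}(A)^\perp} R$ or a closely related candidate) to produce an honest second complement of $B$ whose meet with $B$ really is zero — without implicitly invoking the orthomodularity or distributivity that we are trying to conclude. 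This last step is where the most technical care will be needed.
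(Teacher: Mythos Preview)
Your treatment of the equivalence of (1)--(4) is correct and in places slightly cleaner than the paper's. For (1)$\Leftrightarrow$(2), the paper cites an external reference for (1)$\Rightarrow$(2) and proves (2)$\Rightarrow$(1) by a direct construction using the Sasaki-projection \autoref{Sasaki}: if $B$ is not an ideal one finds $a\in A$ with $a^*a\in B$, $aa^*\in B^\perp$, and extracts from $vA_av^*$ a non-zero $*$-annihilator $C$ with $C\cap B = C\cap B^\perp = \{0\}$. Your route via \autoref{halfa}, $\bigvee$-density, and \autoref{pseudo} avoids this and stays essentially order-theoretic; the paper uses exactly the same $\bigvee$-density lift for (4)$\Rightarrow$(1), so you are in agreement there.

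The genuine gap is exactly where you flag it: (5)$\Rightarrow$(1). Your proposed candidate $D = B^\perp \vee_{\mathscr{P}(A)^\perp} R$ does satisfy $D\vee B = A$ and $D\neq B^\perp$ (since $R\leq D$ while $R\wedge B^\perp=\{0\}$ and $R\neq\{0\}$), but showing $D\wedge B = \{0\}$ is precisely the distributivity or orthomodularity you are trying to establish, and there is no workaround in a merely separative ortholattice. Indeed, immediately after its proof the paper remarks that a purely lattice-theoretic argument for (5)$\Rightarrow$(2) would follow from \cite{MaedaMaeda1970} Theorem (4.20) \emph{if} one knew $\mathscr{P}(A)^\perp$ to be section complemented --- and this is left as an open question. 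The paper's actual argument for (5)$\Rightarrow$(1) is algebraic, not order-theoretic: if $B$ is not an ideal then $p_{B^\perp}$ (respectively $p_B$) fails to commute with some $a\in A^1_+$, and conjugating $B^\perp$ by the unitary $u=e^{ita}$ for small $t>0$ (as in the proof of \autoref{unitunicomp}) produces a $*$-annihilator $C = u^*B^\perp u$ with $0 < \|p_{B^\perp} - p_C\| < 1$, hence $C\wedge B = \{0\}$, $C\vee B = A$, but $C\neq B^\perp$. This unitary-conjugation idea is what your proposal is missing.
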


\begin{proof} \eqref{ac2}$\Rightarrow$\eqref{ac3},\eqref{ac4},\eqref{ac5} is immediate.  The other implications are proved as follows.
\begin{itemize}
\item[\eqref{ac1}$\Rightarrow$\eqref{ac2}] See \cite{Bice2014c} Corollary 5.2.
\item[\eqref{ac2}$\Rightarrow$\eqref{ac1}] If $B\in\mathscr{P}(A)^\perp$ is not an ideal then $B^{\perp\perp}=B\subsetneqq ABA$ and thus we have $a\in A\setminus\{0\}$ with $a^*a\in B$ and $aa^*\in B^\perp$.  Define $u=u_a$ and $v=v_a$ which, as $\mathscr{P}(A)^\perp$ is $\leq$-dense, means we have $C\in\mathscr{P}(A)^\perp\setminus\{0\}$ with $C\subseteq vA_av^*$.  As $C\subseteq A_a$,
\[B^\perp\cap C=B^\perp\cap A_a\cap C=\overline{aAa^*}\cap C\subseteq\overline{aAa^*}\cap vA_av^*.\]  But, as shown in the proof of \autoref{Sasaki}, $vv^*uu^*vv^*=\frac{1}{2}vv^*$ so $||vv^*uu^*||=\frac{1}{\sqrt{2}}<1$ and hence $vA_av^*\cap\overline{aAa^*}=\{0\}$.  Likewise
\[B\cap C=B\cap A_a\cap C=\overline{a^*Aa}\cap C\subseteq\overline{a^*Aa}\cap vA_av^*=\{0\},\] so $C\neq\{0\}=(C\cap B)\vee(C\cap B^\perp)$ and hence $B$ is not central.
\item[\eqref{ac3}$\Rightarrow$\eqref{ac2}] If $B$ is $\vee$-distributive then $B^\perp$ is $\vee$-distributive and hence central, by \autoref{so}, so $B$ is central too.
\item[\eqref{ac4}$\Rightarrow$\eqref{ac1}] Say $B$ is a $\wedge$-pseudocomplement of $C$ in $\mathscr{P}(A)^\perp$.  As $\mathscr{P}(A)^\perp$ is a $\bigvee$-dense $\wedge$-sublattice of $\mathcal{H}(A)$, $B$ must also be a $\wedge$-pseudocomplement of $C$ in $\mathcal{H}(A)$ and hence $B=C^\perp$ is an ideal, by \autoref{pseudo}.  Likewise, if $C$ is a $\wedge$-pseudocomplement of $B$ in $\mathscr{P}(A)^\perp$ then $C=B^\perp$ is an ideal, as is $B=B^{\perp\perp}$.
\item[\eqref{ac5}$\Rightarrow$\eqref{ac1}] If $B=B^{\perp\perp}$ is not an ideal, then neither is $B^\perp$ and hence there exists $a\in A^1_+$ that does not commute with $p_{B^\perp}$.  Then, as in the proof of \autoref{unitunicomp}, we have a unitary $u\in\mathcal{M}(A)$ with $0<||p_{B^\perp}-u^*p_{B^\perp}u||<1$, and hence $C=u^*B^\perp u$ is another complement of $B$ in $\mathscr{P}(A)^\perp$.  So if $B$ has a unique complement, which must be $B^\perp$, then $B$ is an ideal.  Likewise, if $B$ is not an ideal then there exists $a\in A^1_+$ that does not commute with $p_B$, which allows us to find another complement of $B^\perp$ in $\mathscr{P}(A)^\perp$.  So if $B$ is the unique complement of $C$ in $\mathscr{P}(A)^\perp$ then $B=C^\perp$ so $C=B^\perp$ and hence $B$ is an ideal.
\end{itemize}
\end{proof}

A purely order theoretic proof of \eqref{ac5}$\Rightarrow$\eqref{ac2} above would be possible (see \cite{MaedaMaeda1970} Theorem (4.20)) if we could show that $\mathscr{P}(A)^\perp$ is not only SSC, but actually SC.

\begin{dfn}
$\mathbb{P}$ is \emph{section complemented (SC)} if $[0,p]$ is complemented, for $p\in\mathbb{P}$.
\end{dfn}

\begin{qst}
Is $\mathscr{P}(A)^\perp$ section complemented?
\end{qst}

We also have the following analog of \autoref{HAcomm}.  As in \cite{Bice2014c} Definition 5.5, call $B\in\mathscr{P}(A)^\perp$ \emph{$\triangledown$-finite} when $C\subseteq B$ and $C^\triangledown=B^\triangledown$ implies $C=B$, for all $C\in\mathscr{P}(A)^\perp$.  We also define $C^{\perp_B}=C^\perp\cap B$ and $C^{\triangledown_B}=C^\triangledown\cap B$.

\begin{cor}\label{Acomm}
For any $B\in\mathscr{P}(A)^\perp$, the following are equivalent.
\begin{enumerate}
\item\label{Acomm1} $B$ is commutative.
\item\label{Acomm2} $\mathscr{P}(B)^{\perp_B}=\mathscr{P}(B)^{\triangledown_B}$.
\item\label{Acomm3} $B$ is $\triangledown$-finite.
\item\label{Acomm4} $\mathscr{P}(A)^{\perp_B}$ is distributive.
\item\label{Acomm5} $B=C^{\perp\perp}$ for some commutative $C\in\mathcal{H}(A)$.
\end{enumerate}
Moreover, if any/all of these conditions is satisfied then $\mathscr{P}(A)^{\perp_B}=\mathscr{P}(B)^{\perp_B}$.
\end{cor}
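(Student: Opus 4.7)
My plan is to mirror \autoref{HAcomm}, using commutativity to collapse the $\perp$ and $\triangledown$ relations on $B$, and closing the loop through contrapositive constructions borrowed from that corollary. I would take (1)$\Leftrightarrow$(5) as the structural backbone, derive (2), (3), (4) and the moreover clause from (1) directly, and then refute each of (2), (3), (4) assuming $B$ is not commutative.

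The trivial direction (1)$\Rightarrow$(5) is handled by $C:=B$, which is commutative and satisfies $B=B^{\perp\perp}$ since $B\in\mathscr{P}(A)^\perp$. The technical core is (5)$\Rightarrow$(1): I would argue that $C\in\mathcal{H}(A)$ commutative forces $C^{\perp\perp}$ commutative. In the open-projection picture, $C$ commutative means $p_C$ is abelian in $A''$, i.e.\ $\mathrm{rank}(\pi(p_C))\leq 1$ for every $\pi\in\hat{A}$. I would then show that $p_{C^{\perp\perp}}$ inherits this rank bound: using the reduced atomic identification and \eqref{phiperp}, $C^{\perp\perp}$ is cut out by pure states $\phi$ satisfying $\phi[C^\perp]=\{0\}$, and for each such $\phi$ the supporting vector must lie in the range of some $\pi(p_C)$ (hence be one-dimensional). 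This yields $\mathrm{rank}(\pi(p_{C^{\perp\perp}}))\leq 1$ for all $\pi$, so $B=C^{\perp\perp}=A_{p_B}$ is commutative.

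Granted (1), the moreover clause and (2) follow because, for $\mathrm{rank}(\pi(p_B))\leq 1$, the relation $cb=bc=0$ for $c,b\in B$ is equivalent (via Kadison's transitivity applied in each irreducible $\pi$) to $cAb=bAc=\{0\}$; this collapses $\perp_B$ and $\triangledown_B$ on subsets of $B$, and shows $C^\perp\cap B=(C\cap B)^\perp\cap B$ for arbitrary $C\subseteq A$. For (1)$\Rightarrow$(3), orthomodularity of $\mathscr{P}(A)^\perp$ gives $E:=B\wedge C^\perp\neq\{0\}$ whenever $C\subsetneq B$ in $\mathscr{P}(A)^\perp$, and the same rank argument upgrades $CE=\{0\}$ to $CAE=\{0\}$, separating $C^\triangledown$ from $B^\triangledown$. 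For (1)$\Rightarrow$(4), commutativity of $B$ makes every $D\in\mathscr{P}(A)^{\perp_B}$ an ideal of $B$ via \autoref{HAcomm}, hence central in $\mathscr{P}(A)^{\perp_B}$ by \autoref{anncentre}, so $\mathscr{P}(A)^{\perp_B}$ is distributive.

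For the converses, I would follow the contrapositive construction in the proof of (3)$\Rightarrow$(1) of \autoref{HAcomm}: if $B$ is not commutative, there exists $\pi\in\hat{A}$ with $\mathrm{rank}(\pi(p_B))>1$, and selecting orthogonal unit $v,w\in\mathcal{R}(\pi(p_B))$ plus Kadison transitivity produces $b,c\in B$ with $bc=cb=0$ but $bAc\neq\{0\}$. This single pair of witnesses simultaneously refutes (2) (via $c\in\{b\}^{\perp_B}\setminus\{b\}^{\triangledown_B}$), (3) (by taking $\{b\}^{\perp\perp}\cap B^{\perp\perp}\subsetneq B$ with the same $\triangledown$-closure), and (4) (since $\mathscr{P}(A)^{\perp_B}$ then contains a non-central element, obstructing distributivity by \autoref{anncentre}). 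The main obstacle is making the rank propagation in (5)$\Rightarrow$(1) fully rigorous, as it requires a precise description of how $^{\perp\perp}$ acts on open projections in the reduced atomic representation; I expect this will draw on the *-annihilator structure theory of \cite{Bice2014c}.
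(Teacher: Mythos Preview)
Your appeal to ``orthomodularity of $\mathscr{P}(A)^\perp$'' in the step \eqref{Acomm1}$\Rightarrow$\eqref{Acomm3} is a genuine error: the paper's \autoref{C01K} exhibits $B\in\mathscr{P}(A)^\perp$ and $C\subsetneq B$ with $C^\perp\cap B=\{0\}$, and the entire closing discussion of type~IV C*-algebras is predicated on this failure. What is true is that $\mathscr{P}(A)^\perp$ is \emph{separative} (the corollary after \autoref{SSC=sep}), but that only yields some $E\leq B$ with $E\wedge C=0$, not $E=B\wedge C^\perp$; and your subsequent step (upgrading $CE=\{0\}$ to $CAE=\{0\}$ via the rank bound) needs $CE=\{0\}$, which separativity alone does not provide. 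You can repair this by first identifying $\mathscr{P}(A)^{\perp_B}$ with the regular open algebra of the Gelfand spectrum of $B$ (using commutativity and the moreover clause), where orthomodularity is immediate---but that already gives you \eqref{Acomm4}, so \eqref{Acomm3} should be derived from \eqref{Acomm2} or \eqref{Acomm4}, not directly.

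Your sketch for $\neg$\eqref{Acomm1}$\Rightarrow\neg$\eqref{Acomm3} is also incomplete: taking $D=\{b\}^{\perp\perp}\cap B$ gives no reason to expect $D^\triangledown=B^\triangledown$, since $\{b\}^{\perp\perp}$ can be very small. The paper instead routes through $\neg$\eqref{Acomm2}$\Rightarrow\neg$\eqref{Acomm3}: given $C\in\mathscr{P}(B)^{\perp_B}\setminus\mathscr{P}(B)^{\triangledown_B}$, it uses $C\vee C^{\triangledown_B}\subsetneq B$ (via centrality of $C^{\triangledown_B\triangledown_B}$) together with $(C\vee C^{\triangledown_B})^{\triangledown}=B^{\triangledown}$. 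Likewise, your $\neg$\eqref{Acomm1}$\Rightarrow\neg$\eqref{Acomm4} invokes \autoref{anncentre}, but that theorem concerns $\mathscr{P}(A)^\perp$, not the interval $\mathscr{P}(A)^{\perp_B}$, and the identification $\mathscr{P}(A)^{\perp_B}=\mathscr{P}(B)^{\perp_B}$ is only established under \eqref{Acomm1}.

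On \eqref{Acomm5}$\Rightarrow$\eqref{Acomm1}: your rank-propagation idea (that $\mathrm{rank}(\pi(p_C))\leq 1$ forces $\mathrm{rank}(\pi((\overline{p_C})^\circ))\leq 1$) is plausible and would give a cleaner argument if made rigorous, but the paper avoids this question entirely. It instead proves $B\subseteq C'$ by a direct spectral/functional-calculus contradiction, then uses $c(ab-ba)=c^{1/4}ac^{1/2}bc^{1/4}-c^{1/4}bc^{1/2}ac^{1/4}=0$ for $c\in C_+$ to conclude $ab-ba\in C^\perp\cap B=\{0\}$. This is more elementary and sidesteps any delicate analysis of how $^{\perp\perp}$ acts on ranks in the reduced atomic picture.
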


\begin{proof} If $B$ is commutative then $\mathscr{P}(A)^{\perp_B}=\mathscr{P}(B)^{\perp_B}$, by \cite{Bice2014c} Theorem 5.4.
\begin{itemize}
\item[\eqref{Acomm1}$\Rightarrow$\eqref{Acomm2}] See \autoref{HAcomm} \eqref{HAcomm1}$\Rightarrow$\eqref{HAcomm2} and \cite{Bice2014c} Theorem 4.6.
\item[\eqref{Acomm2}$\Rightarrow$\eqref{Acomm3}] See \autoref{HAcomm} \eqref{HAcomm2}$\Rightarrow$\eqref{HAcomm3} and \cite{Bice2014c} Theorem 5.3.
\item[\eqref{Acomm1}$\Rightarrow$\eqref{Acomm4}] Use $\mathscr{P}(A)^{\perp_B}=\mathscr{P}(B)^{\perp_B}$, \eqref{Acomm1}$\Rightarrow$\eqref{Acomm2} and \cite{Bice2014c} Corollary 5.2.
\item[\eqref{Acomm4}$\Rightarrow$\eqref{Acomm2}] If $\mathscr{P}(B)^{\perp_B}\neq\mathscr{P}(B)^{\triangledown_B}$ then, as in \autoref{anncentre}, take $C\in\mathscr{P}(B)^{\perp_B}\setminus\mathscr{P}(B)^{\triangledown_B}$ and $D\in\mathscr{P}(B)^{\perp_B}\setminus\{0\}$ such that $C\cap D=\{0\}=C^{\perp_B}\cap D$ and $D$ is in the hereditary C*-subalgebra generated by $C$ and $C^{\perp_B}$, so $D\subseteq C\vee C^{\perp_B}$ (with the supremum taken in $\mathscr{P}(A)^\perp$ \textendash\, as we may have $\mathscr{P}(A)^{\perp_B}\neq\mathscr{P}(B)^{\perp_B}$, we may have $C\vee C^{\perp_B}<B$ so this does not follow automatically from $D\subseteq B$).  Thus $D\cap(C\vee C^{\perp_B})=D\neq\{0\}=(D\cap C)\vee(D\cap C^{\perp_B})$ so $\mathscr{P}(A)^{\perp_B}$ is not distributive.

\item[\eqref{Acomm3}$\Rightarrow$\eqref{Acomm2}] If $\mathscr{P}(B)^{\perp_B}\neq\mathscr{P}(B)^{\triangledown_B}$ then, taking $C\in\mathscr{P}(B)^{\perp_B}\setminus\mathscr{P}(B)^{\triangledown_B}$, we have $C\subsetneqq C^{\triangledown_B\triangledown_B}$ and hence $C\vee C^{\triangledown_B}\subsetneqq B$, as $C^{\triangledown_B\triangledown_B}$ is central in $\mathscr{P}(B)^{\perp_B}$, even though $(C\vee C^{\triangledown_B})^{\triangledown_B\triangledown_B}=(C^{\triangledown_B}\cap C^{\triangledown_B\triangledown_B})^{\triangledown_B}=\{0\}^{\triangledown_B}=B$.  By \cite{Bice2014c} Theorem 5.3, $(C\vee C^{\triangledown_B})^{\triangledown_B\triangledown_B}=(C\vee C^{\triangledown_B})^{\triangledown\triangledown}\cap B$ so $B\subseteq(C\vee C^{\triangledown_B})^{\triangledown\triangledown}$ and hence, as $C\vee C^{\triangledown_B}\subseteq B$, we have $(C\vee C^{\triangledown_B})^{\triangledown\triangledown}=B^{\triangledown\triangledown}$ so $B$ is not $\triangledown$-finite.

\item[\eqref{Acomm2}$\Rightarrow$\eqref{Acomm1}] By \eqref{annproj}, any $b\in B$ can be approximated arbitrarily closely by linear combinations of open projections corresponding to *-annihilators of $B$.  If $\mathscr{P}(B)^{\perp_B}=\mathscr{P}(B)^{\triangledown_B}$ then each one of these projections is in $B'$ so $B\subseteq B'$.

\item[\eqref{Acomm1}$\Rightarrow$\eqref{Acomm5}] Immediate.

\item[\eqref{Acomm5}$\Rightarrow$\eqref{Acomm1}] We first claim $B\subseteq C'$.  If not, we would have $c\in C^1_+$ and $b\in B_+$ such that $bc\neq cb$.  Then, for some $\epsilon>0$, we must have $bc_{[\epsilon,1]}\neq c_{[\epsilon,1]}b$ and hence $c_{[\epsilon,1]}b(1-c_{[\epsilon,1]})=c_{[\epsilon,1]}bc_{[0,\epsilon)}\neq0$.  Thus, for some $\delta<\epsilon$ sufficiently close to $\epsilon$, we must have $c_{[\epsilon,1]}bc_{[0,\delta]}\neq0$ and hence $f(c)bg(c)\neq0$ where
$f$ and $g$ are continuous functions on $[0,1]$, $f[0,(\epsilon+\delta)/2]=\{0\}=g[(\epsilon+\delta)/2,1]$ and $f[\epsilon,1]=\{1\}=g[0,\delta]$.  If we had $g(c)bf(b)^2bg(c)\in C^\perp$ then, as $b\in B$, $f(c)bg(c)b=0$ and hence $f(c)bg(c)=0$, a contradiction.  Thus $f(c)bg(c)a\neq0$ for some $a\in C$.  As $C$ is hereditary and both $f(c)$ and $a$ are in $C$, this means that $d=f(c)bg(c)a\in C$ and, likewise $d^*\in C$.  However, $dd^*\leq\lambda f(c)^2$ for some $\lambda>0$ while $d^*d\leq\lambda'g(c)^2$ (note that $a,c\in C$ so $a$ commutes with $c$ and hence with $g(c)\in C+\mathbb{C}1$) for some $\lambda'>0$.  As $f(c)g(c)=0$, this means that $d$ and $d^*$ do not commute, contradicting the fact $C$ is commutative.

Now the claim is proved, take any $a,b\in B_+$.  Given any $c\in C_+$, note that $c(ab-ba)=c^{1/4}ac^{1/2}bc^{1/4}-c^{1/4}bc^{1/2}ac^{1/4}=0$, as $c^{1/4}ac^{1/4},c^{1/4}bc^{1/4}\in C$.  Thus $ab-ba\in C^\perp\cap B=\{0\}$ and hence, as $a$ and $b$ were arbitrary, $B$ is commutative.

\end{itemize}
\end{proof}

\section{C*-Algebra Type Decompositions}\label{C*TD}

Type classification and decomposition has played a fundamental role in von Neumann algebra theory since its inception almost a century ago.  Somewhat analogous type classifications/decompositions have also been obtained for more general C*-algebras, for example in \cite{Cuntz1977} and \cite{CuntzPedersen1979}.  However, it is only recently that completely consistent extensions of the original von Neumann algebra type decomposition have been obtained by utilizing *-annihilators, either explicitly, as in \cite{Bice2014c}, or implicitly, as in \cite{NgWong2013}.\footnote{A major stumbling block appears to have been the appropriate extension of the type I concept from von Neumann algebras to C*-algebras.  In \cite{CuntzPedersen1979} (and earlier in \cite{Glimm1961}), a C*-algebra is called type I (equivalently, postliminary or GCR) when it has only type I representations.  But even type I von Neumann algebras can have non-type I representations, so this does not encapsulate the original meaning of type I.  It is rather the concept of a \emph{discrete} C*-algebra, introduced in \cite{PeligradZsido2000}, that consistently extends the notion of a type I von Neumann algebra.}  In this section, we outline how to obtain order theoretic type decompositions of $A$ and what algebraic characterizations these types have.

First note that, by definition, central elements $\mathbb{P}^\mathrm{C}$ in $\mathbb{P}$ lead to finite direct product decompositions.  To extend this to infinite products requires separativity.  Indeed, we need separativity to first show that the centre of a complete ortholattice $\mathbb{P}$ is a complete sublattice of $\mathbb{P}$, by \cite{MaedaMaeda1970} Corollary (5.14) (although in the case of $\mathscr{P}(A)^\perp$, we know that the centre is $\mathscr{P}(A)^\triangledown$, by \autoref{anncentre}, which can be easily verified to be a complete sublattice of $\mathscr{P}(A)^\perp$ directly - see \cite{Bice2014c}).  Then we can define the \emph{central cover} $\mathrm{c}(p)$ of $p\in\mathbb{P}$ by
\[\mathrm{c}(p)=\bigwedge[p,1]\cap\mathbb{P}^\mathrm{C}.\]
We now get infinite product decompositions as follows.

\begin{thm}\label{sepprod}
If $(p_\alpha)\subseteq\mathbb{P}$ and $\mathrm{c}(p_\alpha)\wedge\mathrm{c}(p_\beta)=0$, for $\alpha\neq\beta$, $[0,\bigvee p_\alpha]\cong\prod[0,p_\alpha]$.
\end{thm}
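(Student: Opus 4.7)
The plan is to exhibit mutually inverse order isomorphisms $\Phi\colon[0,\bigvee p_\alpha]\to\prod[0,p_\alpha]$ and $\Psi\colon\prod[0,p_\alpha]\to[0,\bigvee p_\alpha]$ given by $\Phi(q)=(q\wedge c_\alpha)_\alpha$ and $\Psi((q_\alpha))=\bigvee_\alpha q_\alpha$, where $c_\alpha=\mathrm{c}(p_\alpha)$.  Set also $c=\bigvee c_\alpha$; this is central, since in a separative complete ortholattice the centre is a complete sublattice by \cite{MaedaMaeda1970} Corollary (5.14) (in the $\mathscr{P}(A)^\perp$ case one even has the direct identification with $\mathscr{P}(A)^\triangledown$ from \autoref{anncentre}).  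In particular $c$ and each $c_\alpha$ are $\bigvee$-distributive, and $\bigvee p_\alpha\leq c$ because $p_\alpha\leq c_\alpha\leq c$.

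I would first verify $\Phi$ lands where claimed.  For $q\leq\bigvee_\beta p_\beta$, $\bigvee$-distributivity of $c_\alpha$ gives
\[ q\wedge c_\alpha \;\leq\; \Bigl(\bigvee_\beta p_\beta\Bigr)\wedge c_\alpha \;=\; \bigvee_\beta(p_\beta\wedge c_\alpha) \;=\; p_\alpha, \]
using $p_\beta\leq c_\beta$ and $c_\beta\wedge c_\alpha=0$ to kill all terms with $\beta\neq\alpha$.  The identical computation shows $\Phi\circ\Psi=\mathrm{id}$, and both $\Phi,\Psi$ are obviously order-preserving, so the only nontrivial remaining step is $\Psi\circ\Phi=\mathrm{id}$.

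This last step is the hard part: we need $q=\bigvee_\alpha(q\wedge c_\alpha)$ for every $q\leq\bigvee p_\alpha$, but $\bigvee$-distributivity of the (non-central) $q$ is not available, and $\bigvee$-distributivity of $c_\alpha$ places $c_\alpha$ on the wrong side of the meet.  The plan is to replace direct distributivity by an indirect argument exploiting separativity together with the orthocomplementation.  Set $r=\bigvee_\alpha(q\wedge c_\alpha)\leq q$ and suppose, for contradiction, $r<q$.  Separativity yields some $0<s\leq q$ with $s\wedge r=0$, so $s\wedge c_\alpha\leq s\wedge r=0$ for every $\alpha$.  Centrality of $c_\alpha$ then gives $s=(s\wedge c_\alpha)\vee(s\wedge c_\alpha^\perp)=s\wedge c_\alpha^\perp$, i.e.\ $s\leq c_\alpha^\perp$.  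Taking meets over $\alpha$ and using that $^\perp$ turns sups into infs in an ortholattice,
\[ s\;\leq\;\bigwedge_\alpha c_\alpha^\perp\;=\;\Bigl(\bigvee_\alpha c_\alpha\Bigr)^{\!\perp}\;=\;c^\perp. \]
Combined with $s\leq q\leq c$ this forces $s=0$, the desired contradiction.  Assembling the pieces, $\Phi$ and $\Psi$ are mutually inverse order isomorphisms, establishing $[0,\bigvee p_\alpha]\cong\prod[0,p_\alpha]$.
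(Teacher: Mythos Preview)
Your argument is correct. The paper's own proof consists solely of the citation ``See \cite{MaedaMaeda1970} Lemma 5.8 and Corollary 5.14'', so you have supplied the details the paper outsources. Your approach \textemdash\ the explicit maps $\Phi(q)=(q\wedge c_\alpha)_\alpha$ and $\Psi((q_\alpha))=\bigvee q_\alpha$, using $\bigvee$-distributivity of central elements for $\Phi\circ\Psi=\mathrm{id}$ and a separativity argument for $\Psi\circ\Phi=\mathrm{id}$ \textemdash\ is the standard one and is presumably what Maeda--Maeda do; in particular the SSC/separativity hypothesis is exactly what their Corollary (5.14) (also invoked just before this theorem in the paper) provides, and your contradiction via $s\leq\bigwedge_\alpha c_\alpha^\perp=c^\perp$ is clean. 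One small remark: the $\bigvee$-distributivity of a central element $c_\alpha$ that you use is an immediate consequence of the product decomposition $\mathbb{P}\cong[0,c_\alpha]\times[0,c_\alpha^\perp]$ in the definition of centrality, so no extra hypothesis is needed there.
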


\begin{proof}
See \cite{MaedaMaeda1970} Lemma 5.8 and Corollary 5.14.
\end{proof}

Now say we have some class $\mathbf{L}$ of lattices which is closed under infinite direct products, factors and isomorphisms (which is called a \emph{type class} in \cite{FoulisPulmannova2010}, in the slightly different context of effect algebras).  Then \autoref{sepprod} means that, \[\{p\in\mathbb{P}:[0,p]\in\mathbf{L}\}\] is $\mathbb{P}^\mathrm{C}$-complete, according to \cite{Bice2014b} Definition 2.2.  We then get the following type decomposition from \cite{Bice2014b} Theorem 2.6 (see also \cite{Bice2014b} Theorem 2.4).

\begin{thm}\label{typedecomp}
There exists a unique $p\in\mathbb{P}^\mathrm{C}$ such that $p=\mathrm{c}(q)$, where $[0,q]\in\mathbf{L}$, and $[0,r]\notin\mathbf{L}$, for all $r\in(0,p^\perp]$.
\end{thm}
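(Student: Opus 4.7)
The plan is to build $p$ as the central cover of a maximal orthogonal ``$\mathbf{L}$-family'', and then read off both existence and uniqueness from that maximality together with \autoref{sepprod}. Throughout, I will freely use that $\mathbb{P}^{\mathrm{C}}$ is a complete sublattice of $\mathbb{P}$ (in the $\mathscr{P}(A)^\perp$ case this follows from \autoref{anncentre}) and the basic fact that if $s\in\mathbb{P}^\mathrm{C}$ and $r\leq s$, then $\mathrm{c}(r)\leq s$; in particular $r\leq s\Leftrightarrow r\wedge s^\perp=0$ for central $s$.

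First I would fix the partially ordered set
\[\mathcal{F}=\bigl\{(q_\alpha)_{\alpha\in I}\subseteq\mathbb{P}\setminus\{0\}:[0,q_\alpha]\in\mathbf{L}\text{ and }\mathrm{c}(q_\alpha)\wedge\mathrm{c}(q_\beta)=0\text{ for }\alpha\neq\beta\bigr\},\]
ordered by inclusion. This is non-empty (the empty family lies in it) and every chain has an obvious union bound in $\mathcal{F}$, so Zorn's lemma supplies a maximal family $(q_\alpha)_{\alpha\in I}\in\mathcal{F}$. Set $q=\bigvee_\alpha q_\alpha$ and $p=\mathrm{c}(q)$. Since central covers distribute over arbitrary joins in a complete ortholattice (this is where one uses that $\mathbb{P}^\mathrm{C}$ is closed under $\bigvee$), we have $p=\bigvee_\alpha\mathrm{c}(q_\alpha)$.

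Next I would verify $[0,q]\in\mathbf{L}$. By \autoref{sepprod}, the pairwise orthogonality of the central covers gives the isomorphism $[0,q]\cong\prod_\alpha[0,q_\alpha]$, and since each factor lies in $\mathbf{L}$, closure of $\mathbf{L}$ under infinite direct products (and isomorphisms) forces $[0,q]\in\mathbf{L}$. Thus $p=\mathrm{c}(q)$ has the required form.

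The heart of the argument is the maximality step, which yields the ``no $\mathbf{L}$-piece below $p^\perp$'' property. Suppose, for contradiction, $r\in(0,p^\perp]$ with $[0,r]\in\mathbf{L}$. Since $p^\perp\in\mathbb{P}^{\mathrm{C}}$ and $r\leq p^\perp$, we get $\mathrm{c}(r)\leq p^\perp$, so $\mathrm{c}(r)\wedge\mathrm{c}(q_\alpha)\leq p^\perp\wedge p=0$ for every $\alpha$. Hence $(q_\alpha)\cup\{r\}\in\mathcal{F}$, contradicting maximality. This is the one step I expect to be delicate: it silently uses that the hypotheses on $\mathbf{L}$ and the separativity built into \autoref{sepprod} combine so that ``having a type-$\mathbf{L}$ representative below a central element'' is stable under joins of orthogonal central pieces; any subtlety in the present framework would hide here.

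Finally, for uniqueness, suppose $p'\in\mathbb{P}^\mathrm{C}$ also satisfies the conditions, witnessed by $q'$ with $[0,q']\in\mathbf{L}$ and $p'=\mathrm{c}(q')$. Consider $r=q'\wedge p^\perp$. Since $[0,q']\in\mathbf{L}$ and $\mathbf{L}$ is closed under factors (i.e.\ passing to principal subintervals, via the decomposition $[0,q']\cong[0,q'\wedge p]\times[0,q'\wedge p^\perp]$ supplied again by \autoref{sepprod}), we have $[0,r]\in\mathbf{L}$. But $r\leq p^\perp$, so the property of $p$ forces $r=0$, i.e.\ $q'\wedge p^\perp=0$; centrality of $p$ then gives $q'\leq p$ and hence $p'=\mathrm{c}(q')\leq\mathrm{c}(p)=p$. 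Swapping the roles of $p$ and $p'$ yields $p\leq p'$, so $p=p'$, which completes the proof.
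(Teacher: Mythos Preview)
Your argument is correct and is essentially the standard Zorn-based proof that underlies the result the paper cites. The paper does not give a self-contained proof here: it observes that \autoref{sepprod} makes $\{p\in\mathbb{P}:[0,p]\in\mathbf{L}\}$ a $\mathbb{P}^\mathrm{C}$-complete class (in the sense of \cite{Bice2014b}) and then invokes \cite{Bice2014b} Theorem 2.6. Your maximal-orthogonal-family construction, followed by the product identification via \autoref{sepprod} and the symmetric uniqueness argument using factor-closure, is exactly how one would unwind that citation, so the approaches coincide.

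One minor comment: your remark that the maximality step is where the subtlety hides is slightly off target. That step is purely formal once you know $\mathrm{c}(r)\leq p^\perp$ for central $p^\perp\geq r$. The genuinely non-trivial ingredient is already absorbed earlier, namely that $\mathbb{P}^\mathrm{C}$ is a complete sublattice (so that $\mathrm{c}$ exists and commutes with $\bigvee$) and that \autoref{sepprod} holds; both of these rely on separativity via \cite{MaedaMaeda1970}. Once those are in place, everything else in your argument is routine.
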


In particular, we can take $\mathbb{P}=\mathscr{P}(A)^\perp$ and, to get decompositions like in the original von Neumann algebra type decompositions (see \cite{MurrayvonNemann1936}), we can take $\mathbf{L}$ to be a class exhibiting some degree of distributivity.

To start with, let $\mathbf{L}$ be the class of distributive lattices.  By \autoref{Acomm}, this decomposition agrees with that given in \cite{Bice2014c} Theorem 5.7.  Moreover, the $B\in\mathscr{P}(A)^\perp$ corresponding to the $p$ in \autoref{typedecomp} is \emph{discrete}, according to \cite{PeligradZsido2000} Definition 2.1, while $B^\perp$ is \emph{antiliminary}, according to \cite{Pedersen1979} 6.1.1.  Thus, this decomposition also agrees with the $A_\mathrm{d}$ vs $A_\mathrm{II}+A_\mathrm{III}$ part of the decomposition in \cite{NgWong2013} Theorem 5.2.  Rephrasing \autoref{typedecomp} in this case, we have the following.

\begin{thm}
There exists unique $B,C\in\mathscr{P}(A)^\perp$ with $B$ discrete, $C$ antiliminary and $A=B\vee C$.
\end{thm}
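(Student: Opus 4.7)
The plan is to instantiate \autoref{typedecomp} on $\mathbb{P}=\mathscr{P}(A)^\perp$ with $\mathbf{L}$ the class of distributive lattices, and then translate the resulting abstract decomposition into the algebraic language of discrete versus antiliminary C*-algebras. The framework needed is already in place from the preceding sections: $\mathscr{P}(A)^\perp$ is a complete ortholattice under $^\perp$ whose elements are separative (by the Corollary following \autoref{SSC=sep}), and by \autoref{anncentre} its centre coincides with the annihilator ideals $\mathscr{P}(A)^\triangledown$, which one verifies directly is a complete sublattice. Checking that distributive lattices form a type class in the sense of \cite{FoulisPulmannova2010} is routine: distributivity is preserved by isomorphism, inherited by every interval $[0,q]$, and holds coordinatewise in arbitrary direct products.

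Invoking \autoref{typedecomp} then yields a unique central $B\in\mathscr{P}(A)^\triangledown$ of the form $B=\mathrm{c}(D)$ with $[0,D]$ distributive, and such that $[0,E]$ is non-distributive for every non-zero $E\leq B^\perp$ in $\mathscr{P}(A)^\perp$. Setting $C=B^\perp$ gives $B\vee C=A$ at once from the orthocomplementation, while uniqueness of the pair $(B,C)$ is inherited from the uniqueness clause of \autoref{typedecomp} once one observes that the pair is completely determined by its central component $B$ via $C=B^\perp$.

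What remains is to identify $B$ as discrete and $C$ as antiliminary. For the discrete half, \autoref{Acomm} (specifically \eqref{Acomm4}$\Leftrightarrow$\eqref{Acomm1}) tells us that $[0,D]$ being distributive is equivalent to $D$ being commutative as a C*-algebra, so $B=\mathrm{c}(D)$ is the annihilator ideal central cover of an abelian hereditary C*-subalgebra; unpacking the definition in \cite{PeligradZsido2000} 2.1 confirms this is precisely what it means for $B$ to be discrete. For the antiliminary half, suppose towards a contradiction that $C$ contains a non-zero abelian hereditary C*-subalgebra; by the $\bigvee$-density of *-annihilators in $\mathcal{H}(A)$ established at the start of \S\ref{*AO}, this subalgebra contains a non-zero *-annihilator $F\leq C$, which is itself commutative, so by \autoref{Acomm} the interval $[0,F]$ is distributive, contradicting the defining property of $C$; thus $C$ is antiliminary in the sense of \cite{Pedersen1979} 6.1.1.

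The main obstacle is this last translation step, particularly making precise the sense in which $B=\mathrm{c}(D)$ genuinely inherits discreteness from the commutative hereditary C*-subalgebra $D$; the antiliminary half is by comparison a clean contrapositive argument once one uses the $\bigvee$-density result from \S\ref{*AO}.
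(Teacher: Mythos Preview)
Your proposal is correct and follows exactly the approach the paper takes: the theorem is presented there simply as a rephrasing of \autoref{typedecomp} with $\mathbf{L}$ the class of distributive lattices, using \autoref{Acomm} to identify distributivity of $[0,D]$ with commutativity of $D$, and then citing \cite{PeligradZsido2000} Definition 2.1 and \cite{Pedersen1979} 6.1.1 for the discrete/antiliminary identifications. Your write-up actually supplies more detail than the paper does (in particular the $\bigvee$-density argument for the antiliminary half and the explicit check that distributive lattices form a type class), so there is nothing to correct.
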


When $A$ is a von Neumann algebra, the $B$ above is the type I part of $A$, while $C$ is the type II/III part, so this really is completely consistent with the original von Neumann algebra type I vs II/III decomposition.  The only key difference between these kinds of decompositions in the von Neumann vs general C*-algebra case is that the supremum $\vee$ here may not correspond to an algebraic direct sum $\oplus$ in general, i.e. we may have $A\neq B\oplus C$, although we do necessarily have $A=(B\oplus C)^{\perp\perp}$, i.e. $B\oplus C$ will be an essential ideal in $A$.

We can also consider \autoref{typedecomp} when $\mathbb{P}=\mathscr{P}(A)^\perp$ and $\mathbf{L}$ is the larger class of \emph{modular} lattices, i.e. satisfying
\[p\leq r\quad\Rightarrow\quad p\vee(q\wedge r)=(p\vee q)\wedge r.\]
Then the $B\in\mathscr{P}(A)^\perp$ corresponding to the $p$ in \autoref{typedecomp} is, when $A$ is a von Neumann algebra, precisely the type I/II part of $A$, while $B^\perp$ is the type III part of $A$, by the theorem at the start of \cite{Kaplansky1955}.  In this case it also coincides with the decomposition obtained in \cite{Bice2014c} Theorem 6.10 using the relation $\sim$ on *-annihilators (which coincides with Murray-von Neumann equivalence of projections in the von Neumann algebra case), and with the $A_\mathrm{d}+A_\mathrm{II}$ vs $A_\mathrm{III}$ part of the decomposition obtained in \cite{NgWong2013} Theorem 5.2 using the Cuntz-Pedersen equivalence relation on $A_+$.  It would seem plausible that the decomposition based on modularity agrees with that based on the $\sim$ relation on *-annihilators even in more general C*-algebras and, indeed, this would follow if a converse to \cite{Bice2014c} Theorem 6.14 could be proved.  However, UHF algebras are purely infinite with respect to the $\sim$ relation on *-annihilators (see \cite{Bice2013} Proposition 3.89), but finite with respect to the Cuntz-Pedersen equivalence relation on $A_+$ (as UHF algebras have a faithful trace), so these decompositions do not agree in this case.

We can also consider a slight variant of \autoref{typedecomp} when $\mathbb{P}=\mathscr{P}(A)^\perp$ and $\mathbf{L}$ is the class of \emph{orthomodular} lattices, i.e. those ortholattices satisfying
\[p\leq q\quad\Rightarrow\quad p\vee(p^\perp\wedge q)=q.\]
In this case the lattice $[0,q]$ in \autoref{typedecomp} must be replaced with the ortholattice $[0,q]^{\perp_q}=\{r^\perp\wedge q:r\leq q\}$ and likewise for $[0,r]$ (and this ortholattice variant of \autoref{typedecomp} must be obtained from a similar ortholattice variant of \autoref{sepprod}).  In the von Neumann algebra case this is not very interesting, as *-annihilators correspond to projections and projections are always orthomodular, i.e. $p=1$ in this case.  But *-annihilators in an arbitrary C*-algebra may not be orthomodular, as \autoref{C01K} shows, so $p^\perp$ may be non-zero in this case and one might naturally call this $p^\perp$ the `type IV' part of $A$.  But even though *-annihilators in \autoref{C01K} are not orthomodular, the C*-algebra in \autoref{C01K} still contains a full orthomodular (even commutative) *-annihilator, so it is not type IV (in fact, it is even type I in the restrictive C*-algebra sense of having only type I representations).  Thus this example does not answer the following question.

\begin{qst}
Do there exist any (non-zero) type IV C*-algebras?  I.e. do there exist C*-algebras $A$ for which $\mathscr{P}(B)^\perp$ is not orthomodular for any $B\in\mathscr{P}(A)^\perp$?
\end{qst}

\bibliography{maths}{}
\bibliographystyle{alphaurl}

\end{document}